\newcommand{\f}{\overline}
\theoremstyle{plain}
\newtheorem{theorem}{Theorem}[section]
\newtheorem{lemma}[theorem]{Lemma}
\newtheorem{claim}[theorem]{Claim}
\newtheorem{corollary}[theorem]{Corollary}
\newtheorem{proposition}[theorem]{Proposition}
\newtheorem{question}[theorem]{Question}
\newtheorem{fact}[theorem]{Fact}
\newcommand{\thistheoremname}{}
\newtheorem*{genericthm*}{\thistheoremname}
\newenvironment{namedthm*}[1]
{\renewcommand{\thistheoremname}{#1}%
	\begin{genericthm*}}
	{\end{genericthm*}}
\theoremstyle{definition}\newtheorem{definition}[theorem]{Definition}
\theoremstyle{definition}
\theoremstyle{definition}\newtheorem{remark}[theorem]{Remark}
\theoremstyle{definition}\newtheorem{notation}[theorem]{Notation}
\newcommand{\mc}{\mathcal}
\newcommand{\Q}{\mathbb{Q}}
\newcommand{\N}{\mathbb{N}}
\newcommand{\Z}{\mathbb{Z}}
\DeclareMathOperator{\aut}{Aut}
\DeclareMathOperator{\id}{id}
\DeclareMathOperator{\dom}{dom}
\DeclareMathOperator{\ran}{ran}
\DeclareMathOperator{\rd}{rd}
\newcommand{\nr}{\lnot R}
\begin{document}
	\title[The structure of random automorphisms of $\mc{R}$]{The structure of random automorphisms of the random graph}

		\author[U. B. Darji]{Udayan B. Darji}
		\address{Department of Mathematics, University of Louisville,
			Louisville, KY 40292, USA\\Ashoka University, Rajiv Gandhi Education City, Kundli, Rai 131029, India} 
		\email{ubdarj01@louisville.edu}
		\urladdr{http://www.math.louisville.edu/\!$\tilde{}$ \!\!darji}
		\author[M. Elekes]{M\'arton Elekes}
		\address{Alfr\'ed R\'enyi Institute of Mathematics, Hungarian Academy of Sciences,
			PO Box 127, 1364 Budapest, Hungary and E\"otv\"os Lor\'and
			University, Institute of Mathematics, P\'azm\'any P\'eter s. 1/c,
			1117 Budapest, Hungary}
		\email{elekes.marton@renyi.mta.hu}
		\urladdr{www.renyi.hu/ \!$\tilde{}$ \!\!emarci}
		
		\author[K. Kalina]{Kende Kalina}
		\address{E\"otv\"os Lor\'and
			University, Institute of Mathematics, P\'azm\'any P\'eter s. 1/c,
			1117 Budapest, Hungary}
		\email{kkalina@cs.elte.hu}
		\author[V. Kiss]{Viktor Kiss}
		\address{Alfr\'ed R\'enyi Institute of Mathematics, Hungarian Academy of Sciences,
			PO Box 127, 1364 Budapest, Hungary and E\"otv\"os Lor\'and
			University, Institute of Mathematics, P\'azm\'any P\'eter s. 1/c,
			1117 Budapest, Hungary}
		\email{kiss.viktor@renyi.mta.hu}
		
		\author[Z. Vidny\'anszky]{Zolt\'an Vidny\'anszky}
		\address{Kurt G\"{o}del Research Center for Mathematical Logic, 
			Universit\"{a}t Wien,
			W\"{a}hringer Stra{\ss}e 25, 
			1090 Wien,
			Austria and Alfr\'ed R\'enyi Institute of Mathematics, Hungarian Academy of Sciences,
			PO Box 127, 1364 Budapest, Hungary}
		\email{zoltan.vidnyanszky@univie.ac.at}
		\urladdr{
			http://www.logic.univie.ac.at/~vidnyanszz77/
		}
		\subjclass[2010]{Primary 03E15, 22F50; Secondary 03C15, 28A05, 54H11, 28A99}
		\keywords{Key Words: non-locally compact Polish group, Haar null, Christensen, shy,
			prevalent, typical element, automorphism group, compact catcher, Truss, random automorphism, random graph, conjugacy class} 
		
		\thanks{The second, fourth and fifth authors were partially supported by the
			National Research, Development and Innovation Office
			-- NKFIH, grants no.~113047, no.~104178 and no. ~124749. The fifth
			author was also supported by FWF Grant P29999.}
	\begin{abstract}
	We give a complete description of the size of the conjugacy classes of the automorphism group of the random graph with respect to Christensen's Haar null ideal. It is shown that every non-Haar null class contains a translated copy of a nonempty portion of every compact set and that there are continuum many non-Haar null conjugacy classes. Our methods also yield a new proof of an old result of Truss.

	\end{abstract}
	\maketitle
	%\tableofcontents

	The investigation of the size of the conjugacy classes of topological groups with respect to the meager ideal is an important field with several applications (see e. g. \cite{bernardes},\cite{glasner2003universal},\cite{shelah},\cite{KechrisRosendal},\cite{truss1992generic}). Here, following Dougherty and Mycielski \cite{DM}, we take a different, measure-theoretic perspective (for a detailed introduction we refer the reader to \cite{autgencikk}). Let us consider the following notion of smallness:
	
	\begin{definition}[Christensen, \cite{originalhaarnull}]
		\label{d:haarnull}
		Let $G$ be a 
		Polish group and $B \subset G$ be Borel. We say that $B$ is \textit{Haar null} if 
		there exists a
		Borel probability measure $\mu$ on $G$ such that for every $g,h \in G$
		we have $\mu(gBh)=0$. 
		An arbitrary set $S$ is called Haar null if $S \subset B$ for some Borel Haar 
		null set $B$.
	\end{definition}
	
	Using this definition, it makes sense to talk about the properties of random 
	elements of a Polish group. A property $P$ of elements of a Polish group $G$ is said 
	to \textit{hold almost surely} or \emph{almost every element of G has property 
		$P$} if the set
	$\{g \in G: g \text{ has property } P\}$ is co-Haar null. 
	
	In \cite{autgencikk} we have generalized the results of Dougherty and Mycielski to a large class of automorphism groups of countable structures as follows.
	
	\begin{definition} \label{NACdef} Let $G$ be a closed subgroup of $S_\infty$. We say that \textit{$G$ has the finite algebraic closure property ($FACP$)} if for every finite $S \subset \omega$ the set $\{b:|G_{(S)}(b)|<\infty\}$ is finite, where $G_{(S)}$ denotes the pointwise stabilizer of the set $S$.
	\end{definition}
	
	Among other things, the following theorem has been proved. 
	
	\begin{theorem}
		\label{t:gen}
		Let $G<S_\infty$ be closed. Then the following are equivalent:
		\begin{enumerate}
			\item almost every element of $G$ has finitely many finite orbits,
			\item $G$ has the FACP.
		\end{enumerate}
		Moreover, any of the above conditions implies that almost every element of $G$ has infinitely many infinite orbits.
	\end{theorem}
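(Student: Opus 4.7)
The plan is to prove the two implications separately and then address the ``moreover'' clause.

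For $(1) \Rightarrow (2)$, I argue by contrapositive. If FACP fails, there is a finite $S \subset \omega$ with $B = \{b : |G_{(S)}(b)| < \infty\}$ infinite. Let $H = G_{(S)}$, which is an open subgroup of $G$. For any $h \in H$ one has $h \cdot H(x) = (hH)(x) = H(x)$, so $h$ preserves each $H$-orbit setwise. Since $B$ is a union of finite $H$-orbits and is infinite, it decomposes into infinitely many such orbits, each of which $h$ permutes; hence every $h \in H$ has infinitely many finite orbits on $\omega$, so $H$ is contained in the set $A = \{g \in G : g \text{ has infinitely many finite orbits}\}$. As $H$ is a nonempty open subgroup in the Polish group $G$, countably many cosets of $H$ cover $G$, so $H$ cannot be Haar null, and neither can $A$.

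For $(2) \Rightarrow (1)$, the task is to construct a Borel probability measure $\mu$ on $G$ with $\mu(g_1 A g_2) = 0$ for all $g_1, g_2 \in G$. A first reduction: since $A$ is closed under conjugation (cycle structure is a conjugation invariant), a direct computation gives $g_1 A g_2 = (g_1 g_2) A$, so it suffices to show $\mu(hA) = 0$ for every $h \in G$. The measure $\mu$ would be constructed by back-and-forth: enumerate $\omega = \{a_0, a_1, \ldots\}$ and randomly build finite partial isomorphisms $p_0 \subseteq p_1 \subseteq \ldots$ (each extendable to an element of $G$) by alternately extending domain and range, at each stage picking the next value uniformly from a large finite subset $T_n$ of the admissible choices. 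FACP enters here: the algebraic closure of $\dom p_n \cup \ran p_n$ is finite, so there are infinitely many admissible choices and $|T_n|$ can be taken to grow as fast as needed. The resulting $\mu$ is supported on a compact subset of $G$.

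To see $\mu(hA) = 0$ for fixed $h$: a finite cycle of $h^{-1}g$ of length $k$ is a tuple $(x_0, \ldots, x_{k-1})$ with $g(x_i) = h(x_{(i+1) \bmod k})$, so at each stage the event that the new choice of $g(a)$ closes some cycle has probability at most (number of candidate almost-cycles through $a$)$/|T_n|$. Taking $|T_n|$ to grow fast enough makes the expected total number of closures finite, and Borel--Cantelli yields that almost surely only finitely many finite cycles of $h^{-1}g$ ever form. For the ``moreover'' clause, the same random construction can be arranged so that at infinitely many stages a fresh connected component of the partial functional graph is created and the probability that these components ever merge is controlled; this produces infinitely many orbits almost surely, which, combined with finitely many being finite, gives infinitely many infinite orbits.

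The principal obstacle is the measure construction itself. One must simultaneously ensure (a) feasibility of the back-and-forth, i.e.\ every $p_n$ extends to an element of $G$, which uses both that $G$ is closed in $S_\infty$ and the extension properties derivable from FACP; (b) sufficient genericity at each stage, quantified by the growth of $|T_n|$, so the probabilistic estimates summed over stages remain finite; and (c) uniformity of the estimate in $h$, so the same $\mu$ handles every left translate. Measurability of the bad events is routine, as the set of $g$ having at least $N$ finite cycles is open, making $A$ a $G_\delta$ set.
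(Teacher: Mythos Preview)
This theorem is not proved in the present paper: it is quoted from the companion paper \cite{autgencikk} (the sentence introducing it reads ``the following theorem has been proved''), so there is no proof here to compare your proposal against.

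That said, a few remarks on your outline. Your contrapositive for $(1)\Rightarrow(2)$ is correct and complete: if FACP fails via some finite $S$, then the open subgroup $G_{(S)}$ sits inside the set $A$ of elements with infinitely many finite orbits, and an open subgroup of a Polish group cannot be Haar null (countably many translates cover $G$, and Haar null sets form a $\sigma$-ideal). The reduction $g_1Ag_2=(g_1g_2)A$ via conjugacy-invariance of $A$ is also fine, and indeed matches Lemma~\ref{l:conjugacyinvariant} in this paper.

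For $(2)\Rightarrow(1)$ your sketch is in the right spirit (random back-and-forth, growth of the choice sets governed by FACP, Borel--Cantelli to bound the number of cycles closed), and you are right to flag the measure construction as the crux. One point deserves more care than you give it: at stage $n$ the set of admissible images of the next point $a$ is (a translate of) the $G_{(\dom p_n)}$-orbit of $a$, and FACP only guarantees this is infinite when $a$ lies \emph{outside} the algebraic closure of $\dom p_n\cup\ran p_n$. Since that closure grows with $n$, you must argue that your enumeration eventually reaches each point at a stage where it is still generic, or else handle the finitely many algebraic points at each stage deterministically without spoiling the later probabilistic estimate. Once that bookkeeping is in place, the cycle-counting bound is indeed uniform in $h$: the number of ``almost-cycles'' through the new point at stage $n$ is controlled by $|\dom p_n|$ alone, independently of $h$, so a single $\mu$ witnesses Haar nullness of $A$. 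The ``moreover'' clause can be handled as you suggest, or alternatively one observes that the complement (elements with only finitely many orbits) is contained in the set treated by Proposition~\ref{p:F and C co-Haar null}.
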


	Unfortunately, the above theorem is typically far from being a complete description of the size of the conjugacy classes of an automorphism group of a given countable structure. The aim of the current paper is to solve this question in a special case, namely, to give a complete description of the size of the conjugacy classes of the automorphism group of the random graph $\mc{R}=(V,R)$, that is, the unique countable graph, having the following property: for every pair of finite disjoint sets $A,B \subset V$ there exists $v \in V$ such that $(\forall x \in A)(x R v)$ and $(\forall y \in B)(y \nr v)$. Note that $\aut(\mc{R})$ has the FACP. 
	
	For $f \in \aut(\mc{R})$ and $A \subset V$ let us use the notation $\mc{O}^f(A)$ for the set $\{f^k(v):v \in A, k \in \Z\}$. Our characterization reads as follows. 
	
	\begin{namedthm*}{Theorem \ref{t:randomintro}} For almost every element $f$ of $\aut(\mc{R})$
		\begin{enumerate}
			\item for every pair of finite disjoint sets, $A,B \subset V$ there exists $v \in V$ such that $(\forall x \in A)( x R v)$ and $(\forall y \in B)(y \nr v)$  \textit{ and $v \not \in \mathcal{O}^f(A \cup B)$, i. e., the union of orbits of the elements of $A \cup B$},
			\item (from Theorem \ref{t:gen}) $f$ has only finitely many finite orbits. 
		\end{enumerate}
		These properties characterize the non-Haar null conjugacy classes, i. e., a conjugacy class is non-Haar null if and only if one (or equivalently each) of its elements has properties \eqref{pt:autr1} and \eqref{pt:autr2}.
		
		Moreover, every non-Haar null conjugacy class contains a translate of a portion\footnote{If $K$ is a compact set and U is an open set, a nonempty set of the form $U \cap K$ will be called a \emph{portion of $K$}.} of every compact set and those non-Haar null classes in which the elements have no finite orbits contain a translate of every compact set. 
	\end{namedthm*}

	For a given function $p:\N \setminus \{0\} \to 2$ one can construct inductively an $f_p \in Aut(\mc{R})$ such that $f_p$ has properties $(1)$ and $(2)$ from the above theorem and for every $v \in V$ and $n \in \N \setminus \{0\}$ we have $vRf^n(v) \iff p(n)=0$. Since it is easy to see that for $p \not =p'$ automorphisms of the form $f_p$ and $f_{p'}$ cannot be conjugate we obtain the following corollary:
	\begin{corollary}
		\label{c:autrcont}
		There are continuum many non-Haar null classes in $\aut(\mc{R})$ and their union is co-Haar null.
	\end{corollary}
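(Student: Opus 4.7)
The plan is to combine the explicit construction sketched just before the corollary with Theorem \ref{t:randomintro}, which identifies the non-Haar null conjugacy classes as exactly those whose elements enjoy properties (1) and (2).

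First, for each $p:\N\setminus\{0\}\to 2$ I would build $f_p$ by a back-and-forth recursion on an enumeration $V=\{v_k:k\in\N\}$. At stage $n$, given a finite partial automorphism $\sigma_n$ of $\mc R$ on some finite $F_n$, I would extend it so that (a) it remains a graph embedding, (b) whenever both $w$ and $\sigma_n^m(w)$ lie in its domain we have $wR\sigma_n^m(w)\iff p(m)=0$, (c) $v_n$ belongs to both the domain and the range of the extension, and (d) each partial orbit is continued strictly outward, avoiding cycles. The extension property of $\mc R$ supplies at each step a fresh vertex realising any prescribed finite edge pattern, so (a)--(d) can be met simultaneously. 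By interleaving an enumeration of triples $(A,B,v_n)$ with $A,B\subset V$ finite and disjoint, I would additionally adjoin at certain stages a fresh vertex connected to every element of $A$, to no element of $B$, and lying outside $\mc O^{\sigma_n}(A\cup B\cup\{v_n\})$; this secures property (1). The limit $f_p=\bigcup_n\sigma_n$ then has only infinite orbits (hence (2) trivially), lies in $\aut(\mc R)$, and satisfies $vRf_p^m(v)\iff p(m)=0$ by (b).

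Second, to see $p\neq p'$ forces $f_p$ and $f_{p'}$ to be non-conjugate, suppose $gf_pg^{-1}=f_{p'}$ with $g\in\aut(\mc R)$, and fix any $v\in V$ and $n\geq 1$. Setting $w=g^{-1}(v)$ we have $f_{p'}^n(v)=g(f_p^n(w))$, and since $g$ preserves $R$,
\[
v R f_{p'}^n(v) \iff w R f_p^n(w) \iff p(n)=0.
\]
The defining property of $f_{p'}$ makes the left-hand side equivalent to $p'(n)=0$, whence $p=p'$. Consequently $p\mapsto[f_p]$ is injective, yielding $2^{\aleph_0}$ pairwise distinct conjugacy classes; each is non-Haar null because $f_p$ satisfies (1) and (2), so by Theorem \ref{t:randomintro} its conjugacy class is non-Haar null. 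For the second half of the corollary, the union $U$ of all non-Haar null conjugacy classes equals, by Theorem \ref{t:randomintro}, the set $\{f\in\aut(\mc R): f\text{ satisfies (1) and (2)}\}$; the same theorem asserts that almost every $f\in\aut(\mc R)$ has those properties, so $U$ is co-Haar null.

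The main obstacle I anticipate is the back-and-forth construction of $f_p$: one must juggle the global edge constraint (b), which imposes $vRf_p^m(v)\iff p(m)=0$ for arbitrarily large $m$, together with (1) and the outward-extension condition (d), and verify that no two demands clash. The remedy is that only finitely many instances of (b) are active at any given stage of the recursion, so one can always invoke the extension property of $\mc R$ to produce a fresh vertex realising the requisite finite edge pattern while remaining outside all currently finite orbit-fragments. Given $f_p$, the non-conjugacy argument and the co-Haar null conclusion follow cleanly from Theorem \ref{t:randomintro}.
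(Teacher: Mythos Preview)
Your proposal is correct and follows exactly the approach the paper takes: the paper's entire argument is the one-paragraph sketch preceding the corollary, which asserts that for each $p:\N\setminus\{0\}\to 2$ one can inductively build $f_p\in\aut(\mc R)$ satisfying properties (1) and (2) of Theorem~\ref{t:randomintro} together with $vRf_p^n(v)\iff p(n)=0$, observes that distinct $p$ give non-conjugate automorphisms, and invokes Theorem~\ref{t:randomintro} for both the non-Haar null and co-Haar null conclusions. You have simply fleshed out the back-and-forth construction of $f_p$ and the non-conjugacy computation that the paper leaves to the reader; one small point worth making explicit in your write-up is that because every extension step uses a fresh vertex, distinct partial orbit fragments never merge, so the witness vertex you adjoin for a pair $(A,B)$ stays outside $\mc O^{f_p}(A\cup B)$ and not merely outside the finite partial orbit $\mc O^{\sigma_n}(A\cup B)$.
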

	
	Note that it was proved by Solecki \cite{openlyhaarnull} that in every non-locally compact Polish group that admits a two-sided invariant metric there are continuum many pairwise disjoint non-Haar null Borel sets, thus the above corollary is an extension of his results for $\aut(\mc{R})$.  
	
	In the proof we use a version (see Lemma \ref{l:split}) of the following lemma  which is interesting in itself.
	
	\begin{lemma} (Splitting Lemma, finite version) If $F \subset \aut(\mc{R})$ is a finite set and $A,B \subset V$ are disjoint finite sets, then there exists a vertex $v$ so that for every distinct $f,g \in F$ we have $f(v) \not =g(v)$, $(\forall x \in A)( x R v)$ and $(\forall y \in B)(y \nr v)$.
	\end{lemma}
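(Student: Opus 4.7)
The plan is to reformulate the non-collision condition and then iteratively invoke the extension property of $\mc{R}$. The requirement $f(v)\neq g(v)$ for all distinct $f,g\in F$ is equivalent to asking that $v$ is not a fixed point of any element of the finite set $H:=\{g^{-1}f : f,g\in F,\ f\neq g\}\subseteq \aut(\mc{R})\setminus\{\id\}$. So the goal becomes: find one $v$ with the prescribed edges to $A,B$ and with $h(v)\neq v$ for every $h\in H$. (If $H=\emptyset$, the extension property of $\mc{R}$ applied directly to $A,B$ finishes.)

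Before the main induction, I would establish a short sub-lemma: for every non-identity $h\in\aut(\mc{R})$ the set $S_h:=\{w: h(w)\neq w\}$ is infinite. Indeed, $S_h$ is nonempty; pick $v\in S_h$, so $v\neq h(v)$. By the extension property there are infinitely many $u$ with $uRv$ and $\neg(uRh(v))$. Any such $u$ outside $S_h$ would be fixed by $h$, giving $uRv \Leftrightarrow h(u)Rh(v) \Leftrightarrow uRh(v)$, a contradiction; hence infinitely many of these $u$'s lie in $S_h$.

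Now enumerate $H=\{h_1,\dots,h_k\}$ and build chains $A=A_0\subseteq A_1\subseteq\cdots\subseteq A_k$ and $B=B_0\subseteq B_1\subseteq\cdots\subseteq B_k$ of finite disjoint sets. At stage $i$, using the sub-lemma, pick $w_{i+1}\in S_{h_{i+1}}$ with $w_{i+1},h_{i+1}(w_{i+1})\notin A_i\cup B_i$ (the forbidden set is finite), and set $A_{i+1}:=A_i\cup\{w_{i+1}\}$ and $B_{i+1}:=B_i\cup\{h_{i+1}(w_{i+1})\}$; disjointness is preserved because $w_{i+1}\neq h_{i+1}(w_{i+1})$. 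Finally apply the extension property to $A_k,B_k$ to obtain a vertex $v$ with $xRv$ for all $x\in A_k$ and $y\nr v$ for all $y\in B_k$. Since $A\subseteq A_k$ and $B\subseteq B_k$, the original edge condition holds, and for each $i$, if $h_i(v)=v$ then applying $h_i$ to $vRw_i$ would yield $vRh_i(w_i)$, contradicting $h_i(w_i)\in B_k$. The only nonroutine point is the sub-lemma on the infinitude of moved vertices; once it is in hand the iterative edge augmentation is mechanical, and I expect the real technical work to appear only when this finite version is strengthened to Lemma \ref{l:split}.
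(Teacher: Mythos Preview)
Your proof is correct. The sub-lemma that each non-identity automorphism of $\mc{R}$ moves infinitely many points is sound, and the iterative augmentation of $A,B$ by a moved point $w_i$ and its image $h_i(w_i)$ works exactly as you describe; the disjointness bookkeeping is fine once one also excludes $h_{i+1}^{-1}(A_i\cup B_i)$ when choosing $w_{i+1}$, which is implicit in your ``the forbidden set is finite''.

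Your route differs from the paper's. The paper does not prove the finite version separately; it deduces it from the compact Splitting Lemma (Lemma~\ref{l:split}), whose engine is Lemma~\ref{l:for2}. There, for a pair $h,h'$ one picks $w_0$ with $h(w_0)\neq h'(w_0)$, then a \emph{codomain} test vertex $w_1$ with $w_1Rh(w_0)$ and $w_1\lnot R h'(w_0)$, and forces $v$ to be adjacent to every preimage in $\mc{L}^{-1}(w_1)$ and non-adjacent to every preimage in $\mc{L}'^{-1}(w_1)$; this yields $w_1Rh(v)$ and $w_1\lnot R h'(v)$, hence $h(v)\neq h'(v)$. You instead pass to $h_i=g^{-1}f$ and use a \emph{domain} moved point $w_i$ to rule out $h_i(v)=v$. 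Your reformulation is slicker for finite $F$, but the paper's codomain approach is what makes the step to compact $\mc{K}$ possible: there one must handle a single finite partial restriction $p$ simultaneously for all $h\in[p]\cap\mc{K}$, and the preimage sets $\mc{L}^{-1}(w_1)$ remain finite by compactness, whereas the set $\{g^{-1}f:f,g\in\mc{K},\,f\neq g\}$ can accumulate at the identity and admit no uniform moved point. So your argument is a clean shortcut for the finite statement, while the paper's is tailored to the generalisation you anticipate in your last sentence.
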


	From the above theorem and the Splitting Lemma one can give a new proof of well known results of Truss \cite{truss1985group} (which was improved by him later) and Rubin, that states that if $f,g $ are non-identity elements in $\aut(\mc{R})$ then $g$ is the product of four conjugates of $f$, see Theorem \ref{t:truss}.
	
	Finally, we would like to point out that a similar characterization result can be proved for $\aut(\Q)$, the automorphism group of the rational numbers (as an ordered set) see \cite{autqcikk} and \cite{autgencikk}. Interestingly, the proof is completely different, hence the following question is very natural:
	\begin{question}
		Is it possible to unify these proofs? Are there necessary and sufficient model theoretic conditions which characterize the measure theoretic behavior of the conjugacy classes?
	\end{question}
		
	The paper is organized as follows. First, in Section \ref{s:prel} we summarize facts and notations used later, then in Section \ref{s:autr} we prove our main theorem. We present an application of our theorem in Section \ref{s:appl}.

	\section{Preliminaries and notations}
	\label{s:prel}
	We will follow the notations of \cite{kechrisbook}. For a detailed introduction to the theory of Polish groups see \cite[Chapter 1]{becker1996descriptive}, while the model theoretic background can be found in \cite[Chapter 7]{hodges}. Nevertheless, we summarize the basic facts which we will use.

	As mentioned before, $S_\infty$ stands for the permutation group of the countably infinite set $\omega$. It is well known that $S_\infty$ is a Polish group with the pointwise convergence topology. This coincides with the topology generated by the sets of the form $[p]=\{f \in S_\infty: p \subset f\}$, where $p$ is a finite partial permutation.
	
	Let $\mathcal{A}$ be a countable structure. By the countability of $\mc{A}$, every automorphism $f \in \aut(\mc{A})$ can be regarded as an element of $S_\infty$, and it is not hard to see that in fact $\aut(\mc{A})$ will be a closed subgroup of $S_\infty$. Moreover, the converse is also true, namely every closed subgroup of $S_\infty$ is isomorphic to the automorphism group of a countable structure. 
	
	\begin{notation}
		\label{n:impo}	
	We fix an enumeration of $\{v_0,v_1,\dots\}$ of $V$, the vertex set of the random graph. If $\mc{K} \subset \aut(\mathcal{R})$ and $M \subset V$ then $\mc{K}(M)=\{f(v):v \in M,f \in \mc{K}\}$, similarly $\mc{K}^{-1}(M)=\{f^{-1}(v):v \in M, f \in \mc{K}\}$ and $\mc{K}|_M=\{f|_M:f \in \mc{K}\}$. For a set $M \subset V$ we will denote by $M^*$ the set $M \cup \mc{K}^{-1}(M)$. We shall also abuse this notation, for $v \in V$ letting $\mc{K}(v)=\mc{K}(\{v\})$. Moreover, we will also use the notation $\mc{K}^2=\{ff':f,f' \in \mc{K}\}$ and $\mc{K}^{-1}=\{f^{-1}:f \in \mc{K}\}$. If $f$ is a function let us use the notation $\rd(f)$ for the set $\ran(f) \cup \dom(f)$. 
	
	\end{notation}
	
	We will constantly use the following fact.
	
	\begin{fact}
		\label{f:compactchar} Let $\mathcal{A}$ be a countable structure. A closed subset $\mathcal{K}$ of $\aut(\mathcal{A})$ is compact if and only if for every $M$ finite set the set $\mc{K}(M) \cup \mc{K}^{-1}(M)$ is finite. In particular, for a compact set $\mc{K}$ the set $\mc{K}|_M$ is also finite. 
	\end{fact}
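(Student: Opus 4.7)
The plan is to deduce this from the standard characterization of compactness in $S_\infty$, using that $\aut(\mc{A})$ carries the subspace topology from $S_\infty$, which in turn is obtained by embedding $f \mapsto (f,f^{-1})$ into $\omega^\omega \times \omega^\omega$ (with $\omega$ discrete). Once this embedding is in place, a subset is compact iff it is closed and each coordinate projection takes finitely many values.

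For the direction ($\Rightarrow$), fix $v \in V$. The evaluation map $e_v\colon\mc{K}\to\omega$, $f\mapsto f(v)$, is continuous (indeed $e_v^{-1}(\{w\})=[\{(v,w)\}]\cap\mc{K}$ is relatively open), so its image is a compact subset of the discrete space $\omega$, hence finite. Since inversion is continuous in $S_\infty$, the same argument applied to $f\mapsto f^{-1}(v)$ shows $\{f^{-1}(v):f\in\mc{K}\}$ is finite. Taking a finite union over $v\in M$ yields that $\mc{K}(M)\cup\mc{K}^{-1}(M)$ is finite.

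For the direction ($\Leftarrow$), assume $\mc{K}$ is closed in $\aut(\mc{A})$ and that the finiteness condition holds. Applied to singletons $M=\{v\}$, the hypothesis says that for every $v$ the sets $F_v:=\mc{K}(\{v\})$ and $F'_v:=\mc{K}^{-1}(\{v\})$ are finite. Hence under the embedding $f\mapsto(f,f^{-1})$ the set $\mc{K}$ lies inside $\prod_v F_v \times \prod_v F'_v$, which is compact by Tychonoff. Since $\mc{K}$ is closed in $S_\infty$ (being closed in the closed subgroup $\aut(\mc{A})$), it is closed in this compact product and therefore compact.

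The ``in particular'' clause is then immediate: if $\mc{K}$ is compact and $M$ is finite, each restriction $f|_M$ is a function from the finite set $M$ into the finite set $\mc{K}(M)$, so there are at most $|\mc{K}(M)|^{|M|}$ such restrictions. There is no real obstacle here; the only subtlety to keep in mind is that the topology on $S_\infty$ is \emph{not} merely the subspace topology from $\omega^\omega$ (in which inversion would fail to be continuous), which is precisely why the hypothesis needs to involve $\mc{K}^{-1}(M)$ as well as $\mc{K}(M)$.
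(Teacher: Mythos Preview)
The paper states this result as a Fact without proof, so there is no argument in the paper to compare against; your proof is a correct and standard one.

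One small correction to your closing remark: the pointwise-convergence topology on $S_\infty$ \emph{is} the subspace topology inherited from $\omega^\omega$, and inversion \emph{is} continuous in it (if $f_n \to f$ pointwise and $y = f(x)$, then eventually $f_n(x) = y$ since $\omega$ is discrete, hence $f_n^{-1}(y) = x = f^{-1}(y)$). The genuine reason the hypothesis must mention $\mc{K}^{-1}(M)$ is that $S_\infty$ is only $G_\delta$, not closed, in $\omega^\omega$; thus a closed subset of $S_\infty$ contained in a compact subset of $\omega^\omega$ need not itself be compact, because a pointwise limit of permutations can fail to be surjective. Concretely, if $f_n$ is the $(n{+}1)$-cycle $(0\ 1\ \cdots\ n)$, then $\{f_n : n \ge 1\}$ is closed in $S_\infty$ with each $\mc{K}(\{v\})$ finite, yet it is not compact (the pointwise limit in $\omega^\omega$ is the successor map). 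None of this affects the validity of your argument, since you carry it out via the embedding $f \mapsto (f,f^{-1})$ into $\omega^\omega \times \omega^\omega$, whose image is closed.
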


	Let us consider the following notion of largeness:
	
	\begin{definition}
		\label{d:catcherbiter}
		Let $G$ be a Polish topological group. A set $A \subset G$ is called \textit{compact catcher} if for every compact $K \subset G$ there exist $g,h \in G$ so that $gKh \subset A$.  $A$ is \textit{compact biter} if for every compact $K \subset G$ there exist an open set $U$ and $g,h \in G$ so that $U \cap K \not = \emptyset$, and $g(U \cap K)h \subset A$. 
	\end{definition}
	 The following easy observation is one of the most useful tools to prove that a certain set is not Haar null.
	\begin{fact} (see \cite{autgencikk})
		\label{f:biter}
		If $A$ is compact biter then it is not Haar null.
	\end{fact}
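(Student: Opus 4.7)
The plan is to argue by contradiction, so assume that $A$ is compact biter and yet Haar null. Then there is a Borel set $B \supset A$ and a Borel probability measure $\mu$ on $G$ with $\mu(gBh) = 0$ for all $g, h \in G$. Since $\mu$ is a Borel probability measure on a Polish space, it is tight, so there is a compact set $K \subset G$ with $\mu(K) > 0$. A naive attempt now would be to apply the compact biter property directly to $K$: one obtains an open $U$ and $g, h \in G$ with $U \cap K \neq \emptyset$ and $g(U \cap K)h \subset A \subset B$, and then $\mu(g^{-1}Bh^{-1}) \ge \mu(U \cap K)$ would contradict $\mu(g^{-1}Bh^{-1}) = 0$.

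The main obstacle is precisely the last inequality: the compact biter property only promises that $U \cap K$ is nonempty, not that it has positive $\mu$-measure, so the naive attempt does not close. My plan to overcome this is to replace $K$ by a smaller compact set $K_0$ on which every nonempty relatively open subset has positive measure, namely $K_0 := \supp(\mu|_K)$, the support of the finite measure $\mu|_K$ viewed as a measure on $K$.

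The verification that $K_0$ has the required properties uses second countability of $G$ in an essential way: by Lindelöf, the complement $K \setminus K_0$ is a countable union of relatively open $\mu$-null subsets of $K$, hence $\mu(K_0) = \mu(K) > 0$. Moreover, if $U \subset G$ is open with $U \cap K_0 \neq \emptyset$, then picking $x \in U \cap K_0$ the definition of support forces $\mu(U \cap K) > 0$, and since $U \cap K$ and $U \cap K_0$ differ by a null set this gives $\mu(U \cap K_0) > 0$.

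With this improved compact set in hand, I apply the compact biter property to $K_0$: there exist an open $U \subset G$ and $g, h \in G$ such that $U \cap K_0 \neq \emptyset$ and $g(U \cap K_0)h \subset A \subset B$. This yields $U \cap K_0 \subset g^{-1} B h^{-1}$, so $\mu(g^{-1} B h^{-1}) \ge \mu(U \cap K_0) > 0$, contradicting the Haar nullness of $B$. The whole argument is short; its only real content is the observation that for the compact biter property to be useful against a measure, one must first restrict the measure to a compact set whose topological support equals the set itself, which is automatic for $\supp(\mu|_K)$.
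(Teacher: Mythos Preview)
Your proof is correct. The paper does not actually prove this fact; it merely states it with a reference to \cite{autgencikk}, so there is no in-paper argument to compare against. Your argument---pass from a compact set of positive measure to the support of the restricted measure so that every nonempty portion has positive measure, then apply the compact biter hypothesis---is the standard one, and all steps are justified: tightness of Borel probability measures on Polish spaces gives the initial compact $K$; second countability ensures $\supp(\mu|_K)$ has full measure in $K$; and the definition of support then forces $\mu(U\cap K_0)>0$ whenever $U\cap K_0\neq\emptyset$, yielding the contradiction.
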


	It is sometimes useful to consider right and left Haar null sets: a Borel set $B$ is \emph{right (resp. left) Haar null} if there exists a Borel probability measure $\mu$ on $G$ such that for every $g \in G$
	we have $\mu(Bg)=0$ (resp. $\mu(gB)=0$). An arbitrary set $S$ is called \emph{right (resp. left) Haar null} if $S \subset B$ for some Borel right (resp. left) Haar null set $B$. The following observation will be used several times.

	\begin{lemma} (see \cite{autgencikk})
		\label{l:conjugacyinvariant}
		Suppose that $B$ is a Borel set that is invariant under conjugacy. Then $B$ is left Haar null iff it is right Haar null iff it is Haar null.
	\end{lemma}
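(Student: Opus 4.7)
The plan is to exploit the fact that conjugacy invariance of $B$, namely $gBg^{-1} = B$ for every $g \in G$, is equivalent to the set-theoretic identity $gB = Bg$ for every $g$. Multiplying this identity on the right by $h$ immediately yields
\[
gBh \;=\; (gB)h \;=\; (Bg)h \;=\; B(gh) \;=\; (gh)B
\]
for all $g, h \in G$, so every two-sided translate of $B$ coincides with a one-sided translate of $B$ (on either side).

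Granted this observation, the three equivalences are almost automatic. The implications ``Haar null $\Rightarrow$ left Haar null'' and ``Haar null $\Rightarrow$ right Haar null'' are trivial, specializing $g = e$ (respectively $h = e$) in the defining condition. Conversely, suppose $\mu$ is a Borel probability measure witnessing that $B$ is left Haar null, i.e.\ $\mu(kB) = 0$ for every $k \in G$. Then for arbitrary $g, h \in G$ the identity above gives $\mu(gBh) = \mu((gh)B) = 0$, so the very same measure $\mu$ already witnesses that $B$ is Haar null. The right Haar null case is entirely symmetric, using the form $gBh = B(gh)$.

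There is no genuine obstacle: the lemma reduces to a one-line set-theoretic computation. The only point worth emphasizing is that no modification of the witnessing measure is required — the same $\mu$ serves all three notions simultaneously, which is why the Borel hypothesis on $B$ (rather than a subset that is merely contained in a Borel Haar null set) causes no issue.
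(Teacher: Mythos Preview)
Your argument is correct. The key observation that conjugacy invariance $gBg^{-1}=B$ is equivalent to $gB=Bg$, and hence that every two-sided translate $gBh$ equals the one-sided translate $(gh)B=B(gh)$, is exactly the right idea, and the conclusion that the same witnessing measure works for all three notions follows immediately.

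Note, however, that the paper does not actually give a proof of this lemma at all: it is merely quoted from the companion paper \cite{autgencikk}. So there is no ``paper's own proof'' to compare against here. Your self-contained argument is the standard one and would serve perfectly well in place of the citation.
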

	
	The next fact, which can be found verbatim in \cite{autgencikk}, will be used in our characterization result. 
	
	\begin{proposition}(\cite[Proposition 4.10]{autgencikk})
		\label{p:F and C co-Haar null}
		Let $G \leq S_\infty$ be a closed subgroup. If $G$ has the $FACP$ then the set
		\begin{equation*}
		\begin{split}
		\mathcal{C} = \{g \in G :& \text{ for every finite } F \subset \omega \text{ and }
		x \in \omega \;(\text{if $G_{(F)}(x)$ is infinite} \\ 
		&\text{then it is not covered by finitely many orbits of $g$})\}  
		\end{split}
		\end{equation*}
		is co-Haar null.  
	\end{proposition}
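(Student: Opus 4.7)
The plan is to express $G \setminus \mc{C}$ as a countable union of Borel sets and show that each summand is Haar null. For each finite $F \subset \omega$, each $x \in \omega$ such that $Y_{F,x} := G_{(F)}(x)$ is infinite, and each $n \in \N$, set
\[
B_{F,x,n} = \{g \in G : Y_{F,x} \text{ is covered by at most } n \text{ orbits of } g\}.
\]
Since the triples $(F,x,n)$ range over a countable set and Haar null sets form a $\sigma$-ideal, it suffices to prove that each $B_{F,x,n}$ is Haar null.

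Fix such $F,x,n$ and choose $n+1$ pairwise distinct points $y_0,\dots,y_n \in Y_{F,x}$, possible because $Y_{F,x}$ is infinite. If $g \in B_{F,x,n}$, then the partition of $\{y_0,\dots,y_n\}$ by the at most $n$ covering orbits of $g$ forces some two points $y_i \neq y_j$ into a common $g$-orbit, so $g^m(y_i) = y_j$ for some $m \in \Z \setminus \{0\}$. Hence
\[
B_{F,x,n} \;\subseteq\; \bigcup_{0 \le i \neq j \le n}\;\bigcup_{m \in \Z \setminus \{0\}} D_{y_i, y_j, m}, \qquad D_{a,b,m} := \{g \in G : g^m(a) = b\},
\]
which is again a countable union, so the task reduces to showing $D_{a,b,m}$ is Haar null whenever $a \neq b$ and $m \neq 0$.

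For the remaining step I would produce a Borel probability measure $\mu$ witnessing the Haar nullity of $D_{a,b,m}$ by means of a Dougherty--Mycielski-style Cantor scheme. One inductively constructs partial $G$-automorphisms $p_\sigma$ indexed by $\sigma \in 2^{<\omega}$, each extendable to a full element of $G$, so that $p_{\sigma\concat 0}$ and $p_{\sigma\concat 1}$ both extend $p_\sigma$ but send some newly added domain point to distinct images. The FACP is used at every stage to secure enough room to branch: once the finitely many points with finite $G_{(\dom p_\sigma)}$-orbit are enumerated, infinitely many fresh targets remain available. Taking $\mc{K}$ to be the set of pointwise limits along branches of $2^\N$ and $\mu$ the pushforward of the fair-coin measure, Fact \ref{f:compactchar} gives that $\mc{K}$ is compact, while for each $h,k \in G$ the equation $(h^{-1}gk^{-1})^m(a) = b$ pins down only a null set of branches, yielding $\mu(h D_{a,b,m} k) = 0$.

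The main obstacle is this last construction: the branching must simultaneously be compatible with membership of the partial automorphisms in $G$ rather than merely in $S_\infty$, produce a compact limit set via the characterization of Fact \ref{f:compactchar}, and spread the $m$-fold images of $a$ widely enough to annihilate every two-sided translate of $D_{a,b,m}$. The FACP is precisely the combinatorial hypothesis that allows these competing demands to be met at every stage.
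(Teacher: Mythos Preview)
This proposition is not proved in the present paper; it is quoted verbatim from \cite{autgencikk}. That said, your reduction contains a genuine gap: the sets $D_{a,b,m}$ are in general \emph{not} Haar null, so the final step cannot be carried out. Take $m=1$. Since $a=y_i$ and $b=y_j$ lie in the common $G_{(F)}$-orbit $Y_{F,x}$, they lie in the same $G$-orbit $O$. A two-sided translate $gD_{a,b,1}h$ equals $\{f\in G:f(h^{-1}(a))=g(b)\}$, so as $g,h$ range over $G$ the translates are exactly the sets $\{f:f(a')=b'\}$ with $a',b'\in O$. For any Borel probability $\mu$ on $G$ and any fixed $a'\in O$ these sets, as $b'$ runs through $O$, partition $G$; hence $\sum_{b'\in O}\mu(\{f:f(a')=b'\})=1$ and some term is strictly positive. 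Thus no $\mu$ can witness the Haar nullity of $D_{a,b,1}$, and the Cantor-scheme construction you sketch cannot make the equation $g(k^{-1}(a))=h(b)$ hold only on a null set of branches for \emph{every} choice of $h,k$.

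The over-decomposition discards the crucial conjugacy invariance. The set $G\setminus\mc{C}$ is conjugacy invariant, so (after checking Borelness) Lemma~\ref{l:conjugacyinvariant} reduces the task to producing a \emph{single} Borel probability $\mu$ with $\mu\bigl((G\setminus\mc{C})h\bigr)=0$ for every $h\in G$; equivalently, $\mu(B_{F,x,n}h)=0$ for every tuple $(F,x,n,h)$. For a one-sided translate the relevant condition concerns only the orbits of $gh$, and an FACP-driven tree construction of the kind you outline can be arranged so that, for each fixed $h$ and each fixed list $y_0,\dots,y_n\in Y_{F,x}$, the $\mu$-probability that $gh$ places all of them in at most $n$ orbits is zero. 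Your Cantor scheme is the right engine; it simply has to be aimed at the conjugacy-invariant targets $B_{F,x,n}$ via one-sided translates rather than at the non-invariant, non-Haar-null slivers $D_{a,b,m}$.
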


	\section{The characterization result}
	\label{s:autr}
	In this section we prove our main theorem, starting with the proof of the most important tool, the Splitting Lemma.

	\subsection{The Splitting Lemma}

	\begin{definition}
		Suppose that $M \subset V$ is a finite set and $\tau:M \to 2$ a function. We say that a vertex $v \in V$ \textit{realizes $\tau$} if for every $w \in M$ we have \[w R v \iff \tau(v)=1.\]
	\end{definition}
	
	\begin{definition}
		Let $M \subset V$ be a finite set and $\mc{K} \subset \aut(\mc{R})$ be compact. We call a vertex $v$ a \textit{splitting point for $M$ and $\mc{K}$} if for every $h, h' \in \mc{K}$ so that $h|_M \not = h'|_M$ we have $h(v) \not = h'(v)$ and $h^{-1}(v)\not =h'^{-1}(v)$.
	\end{definition}

	\begin{lemma} 
		\label{l:split} (Splitting Lemma)
		Let $\mc{K} \subset \aut(\mathcal{R})$ be a compact set, $M \subset V$ finite, $\tau: M \to 2$ a function and $n\in \omega$. There exists a splitting point for $M$ and $\mc{K}$, $v \in V \setminus \{v_i:i \leq n\}$ that realizes $\tau$.
	\end{lemma}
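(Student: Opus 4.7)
The plan is to find $v$ by first selecting a finite type $\tilde\tau$ on a carefully enlarged finite set $M' \supseteq M \cup \{v_i : i \leq n\}$ and then realizing $\tilde\tau$ via the defining property of $\mc{R}$.

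Let $M' = M \cup \mc{K}(M) \cup \mc{K}^{-1}(M) \cup \mc{K}^{-1}(\mc{K}(M)) \cup \mc{K}(\mc{K}^{-1}(M)) \cup \{v_i : i \leq n\}$; this is finite by Fact~\ref{f:compactchar} applied to $\mc{K}$ and to the compact sets $\mc{K}^{-1}\mc{K}$ and $\mc{K}\mc{K}^{-1}$. Partition $\mc{K}$ into finitely many classes $F_1, \ldots, F_K$ by $h \sim h' \iff h|_{M'} = h'|_{M'}$, and write $\alpha_\ell$ for the common restriction on $F_\ell$. The choice of $M'$ ensures that for every $\ell, \ell'$ and $m \in M$, the ``witness'' vertices $\alpha_\ell(m)$, $\alpha_{\ell'}(m)$, and $\alpha_\ell^{-1}(\alpha_{\ell'}(m))$ all lie in $M'$, and so their values under $\tilde\tau$ can be prescribed.

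I aim to pick $\tilde\tau : M' \to 2$ extending $\tau$ such that for every pair $(F_\ell, F_{\ell'})$ with $\alpha_\ell|_M \ne \alpha_{\ell'}|_M$, at least one of the following holds: (a) some $m \in M$ with $\alpha_\ell(m) \ne \alpha_{\ell'}(m)$ satisfies $\tilde\tau(\alpha_\ell(m)) \ne \tilde\tau(\alpha_{\ell'}(m))$, which forces $h^{-1}(v) \ne h'^{-1}(v)$ via their distinct types over $M$ (since $h^{-1}(v)\, R\, m \iff v\, R\, h(m) = \tilde\tau(\alpha_\ell(m))$); or (b) some such $m$ satisfies $\tilde\tau(\alpha_\ell^{-1}(\alpha_{\ell'}(m))) \ne \tau(m)$, which forces $h(v) \ne h'(v)$ because $h(v)=h'(v)$ combined with $h, h'$ being graph automorphisms yields $v\, R\, m \iff v\, R\, h^{-1}h'(m)$ for every $m \in M$. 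Given such $\tilde\tau$, the defining property of $\mc{R}$ produces infinitely many $v \in V \setminus M'$ realizing $\tilde\tau$, and any such $v$ is a splitting point outside $\{v_i : i \leq n\}$.

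The main obstacle is to verify the existence of $\tilde\tau$: the finitely many disjunctive constraints (one per bad pair) must be simultaneously satisfiable. For most pairs at least one candidate witness lies in $M' \setminus M$, where $\tilde\tau$ is free, so the constraint can be secured. The delicate case is when, for some bad pair, every candidate witness already lies in $M$ with $\tau$-values defeating both (a) and (b); this forces $\alpha_\ell|_M$ and $\alpha_{\ell'}|_M$ to be distinct permutations of $M$ with $\tau$ invariant under both $\alpha_\ell\alpha_{\ell'}^{-1}|_M$ and $\alpha_\ell^{-1}\alpha_{\ell'}|_M$. I would resolve this subcase by a further enlargement of $M'$ producing distinguishing witnesses outside $M$, leveraging the fact that $\alpha_\ell \ne \alpha_{\ell'}$ on $M'$ and applying an analogous argument at the next iterate---with termination guaranteed by the finiteness of the bad-pair set and the compactness of $\mc{K}$.
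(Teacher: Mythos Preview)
Your proposal has two genuine gaps.

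First, the definition of a splitting point requires that for every pair $h,h'\in\mc{K}$ with $h|_M\neq h'|_M$ \emph{both} $h(v)\neq h'(v)$ and $h^{-1}(v)\neq h'^{-1}(v)$ hold. Your plan secures only ``at least one of (a) or (b)'' for each bad pair, so at best you obtain one of the two required inequalities. These are genuinely independent conditions: arranging $h^{-1}(v)\neq h'^{-1}(v)$ via the type of $v$ over $\mc{K}(M)$ says nothing about whether $h(v)=h'(v)$.

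Second, even restricting attention to one of the two inequalities, the consistency of $\tilde\tau$ is not established. In the ``delicate case'' you identify---where every candidate witness already lies in $M$ with $\tau$-values defeating (a) and (b)---you appeal to a further enlargement of $M'$ and a termination argument. But enlarging $M'$ refines the partition into more classes $F_\ell$, and while the set of bad pairs at the level of $M$ does not grow, the different pairs may impose conflicting values on $\tilde\tau$ at overlapping points of $M'\setminus M$ (for instance three classes pairwise differing at a single $m\in M$ already force three pairwise-distinct values in $\{0,1\}$ under scheme (a)). Nothing in your outline explains why the resulting constraint system is satisfiable, and ``termination guaranteed by finiteness and compactness'' is not an argument.

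The paper avoids both problems by a different mechanism. For each pair of distinct restrictions $(p,p')$ it chooses a \emph{fresh external witness} $w_1$ with $w_1\,R\,p(w_0)$ and $w_1\,\lnot R\,p'(w_0)$, and then constrains $v$ on the finite sets $A=\mc{L}^{-1}(w_1)$ and $A'=\mc{L}'^{-1}(w_1)$; these are automatically disjoint, and by placing each $w_1$ beyond all previously used vertices the sets $A_j,A'_j$ for different pairs are pairwise disjoint as well, so no consistency issue arises. Applying this to $\mc{K}_1=\mc{K}\cup\mc{K}^{-1}$ handles both required inequalities at once. That external-witness idea is exactly what your approach is missing.
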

	We start the proof of the lemma with a slightly modified special case, namely when we would like to find a splitting point for a pair of automorphisms. 
	\begin{lemma}
		\label{l:for2}
		Let $p,p'$ be finite partial automorphisms, $w_0$ a vertex with $p(w_0) \not =p'(w_0)$ and $N \in \omega$. There exist two disjoint finite sets of vertices $A,A' \subset V \setminus\{v_i: i \leq N\}$ with the following property: for a vertex $v$ if for every $w \in A$ we have $wRv$ and for every $w' \in A'$ we have $w'\lnot Rv$ then $h(v) \not = h'(v)$ for each $h \in [p] \cap 
		\mc{K}$ and $h' \in [p'] \cap \mc{K}$.

	\end{lemma}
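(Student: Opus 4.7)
The plan is to exploit the hypothesis $p(w_0)\neq p'(w_0)$ by selecting, via the extension property of $\mc{R}$, a single distinguishing vertex $u$ that is adjacent to $a := p(w_0)$ but not to $a' := p'(w_0)$. I would then set
\[
A = ([p]\cap\mc{K})^{-1}(\{u\}) \quad\text{and}\quad A' = ([p']\cap\mc{K})^{-1}(\{u\}),
\]
both of which are finite by Fact~\ref{f:compactchar}, since $[p]\cap\mc{K}\subset\mc{K}$ is compact. For any vertex $v$ realizing the proposed type, every $h\in[p]\cap\mc{K}$ satisfies $h^{-1}(u)\in A$, hence $vRh^{-1}(u)$, which gives $h(v)Ru$; symmetrically every $h'\in[p']\cap\mc{K}$ satisfies $h'(v)\nr u$. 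Together these force $h(v)\neq h'(v)$, which is precisely what the lemma requires.

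The step that needs genuine care is verifying $A\cap A'=\emptyset$. Because $p(w_0)\neq p'(w_0)$ already entails $w_0\in\dom(p)\cap\dom(p')$, every $h\in[p]\cap\mc{K}$ has $h(w_0)=a$ and every $h'\in[p']\cap\mc{K}$ has $h'(w_0)=a'$. For $w\in A$ witnessed by some $h$ with $h(w)=u$, applying the automorphism $h$ yields $wRw_0\iff h(w)Rh(w_0)\iff uRa$, and the right-hand side holds by the choice of $u$; hence $wRw_0$. By the same reasoning applied to $w\in A'$, using the pair $(u,a')$, one obtains $w\nr w_0$. These two conditions are mutually exclusive, so $A$ and $A'$ are automatically disjoint.

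To enforce $A\cup A'\subset V\setminus\{v_i : i\leq N\}$, I would additionally demand $u\notin\mc{K}(\{v_i : i\leq N\})$, which is a finite set by Fact~\ref{f:compactchar}; together with $\{a,a'\}$ this is only finitely many forbidden vertices, and the extension property of $\mc{R}$ still supplies a $u$ satisfying $uRa$, $u\nr a'$, and $u$ outside the forbidden set. The conceptual crux is spotting the ``distinguishing vertex'' $u$ and noticing that adjacency to $w_0$ automatically separates the two preimage sets; once that observation is in hand, the remainder is a short direct verification using only the defining property of $\mc{R}$ together with the compactness of $\mc{K}$.
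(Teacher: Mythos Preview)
Your proposal is correct and matches the paper's proof essentially verbatim: the paper also chooses a single vertex $w_1$ (your $u$) with $w_1Rp(w_0)$, $w_1\lnot Rp'(w_0)$, and $w_1\notin([p]\cap\mc{K})(\{v_i:i\le N\})\cup([p']\cap\mc{K})(\{v_i:i\le N\})$, then sets $A=([p]\cap\mc{K})^{-1}(w_1)$ and $A'=([p']\cap\mc{K})^{-1}(w_1)$, proving disjointness via adjacency to $w_0$ exactly as you do. The only cosmetic difference is that you exclude $u$ from the slightly larger set $\mc{K}(\{v_i:i\le N\})$, which is harmless.
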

	\begin{proof}
		Let us use the notation $\mc{L}=[p] \cap \mc{K}$ and $\mc{L}'=[p'] \cap \mc{K}$. Take a vertex $w_1 \not \in \mc{L}(\{v_i:i \leq N\}) \cup \mc{L}'(\{v_i:i \leq N\})$ with $w_1 R p(w_0)$ and $w_1 \lnot R p'(w_0)$, this can be done by the compactness of $\mc{L}$ and $\mc{L}'$. Now let $A=\mc{L}^{-1}(w_1)$ and $A'=\mc{L}'^{-1}(w_1)$, again these sets are finite by compactness. Moreover, if $x \in A$ then $x=h^{-1}(w_1)$ for some $h \in \mc{L}$. Since $p(w_0) = h(w_0)$ and $w_1Rp(w_0)$, we have that $w_1Rh(w_0)$, hence $h^{-1}(w_1)Rw_0$, that is, $xRw_0$. Analogously, $w_0 \lnot R x$ for every $x \in A'$, in particular $A \cap A'= \emptyset$. Notice that $w_1 \not \in  \mc{L}(\{v_i:i \leq N\}) \cup \mc{L}'(\{v_i:i \leq N\})$ is equivalent to $\emptyset=(\mc{L}^{-1}(w_1) \cup \mc{L}'^{-1}(w_1)) \cap \{v_i:i \leq N\}$, thus $(A \cup A') \cap \{v_i:i \leq N\}= \emptyset$.
		
		Finally, we have to check that $A$ and $A'$ have the required property, so take a vertex $v$ with $wRv$ and $w'\lnot R v$ for every $w \in A$ and $w' \in A'$ and two automorphisms $h \in \mc{L}$ and $h' \in \mc{L}'$. Clearly, $h^{-1}(w_1) \in \mc{L}^{-1}(w_1)=A$ and $h'^{-1}(w_1) \in \mc{L}'^{-1}(w_1)=A'$ so $h^{-1}(w_1)Rv$ and $h'^{-1}(w_1) \lnot R v$, consequently, $w_1R h(v)$ and $w_1 \lnot R h'(v)$, in particular $h(v) \not =h'(v)$.
	\end{proof}
	\begin{proof}[Proof of the Splitting Lemma.]
		
		Let $m_0>n$ so that $M \cup \mc{K}(M) \subset \{v_i: i \leq m_0\}$ and $\mc{K}_1=\mc{K} \cup \mc{K}^{-1}$. By the compactness of $\mc{K}$ the set $M \cup \mc{K}(M)$ is finite and $\mc{K}_1$ is compact. List the pairs of distinct finite partial automorphisms in $\mc{K}_1|_{\{v_i:i \leq m_0\}}$ as $\{(p_j,p'_j):j < k\}$. Again, from the compactness of $\mc{K}_1$ it follows that there are only finitely many such  pairs. Using Lemma \ref{l:for2} we can inductively define a sequence $m_0<m_1<\dots<m_k$ of natural numbers and a sequence of disjoint finite sets $A_j,A'_j \subset \{v_{m_j},v_{m_j+1},\dots ,v_{m_{j+1}}\}$ with the property given by the lemma, that is, for every $j < k$ and $h \in [p_j] \cap \mc{K}_1$ and $h' \in [p'_j] \cap \mc{K}_1$ if a vertex $v$ is connected to every vertex in $A_j$ and not connected to every vertex in $A'_j$ then $h(v) \not = h'(v)$. 
		
		Now take a vertex $v  \in V \setminus \{v_i:i \leq m_k\}$ that realizes $\tau$ and $v$ is connected to each vertex in $\cup_{j < k} A_j$ and not connected to every vertex in $\cup_{j < k} A'_j$. Clearly, the selection of the sequence $(m_j)_{j<k}$ and $(A_j,A'_j)_{j<k}$ shows that such a vertex exists. 
		
		Let $h, h' \in \mc{K}$ be arbitrary with $h|_M \not = h'|_M$ and choose a $w_0 \in M$ so that $h(w_0) \not = h'(w_0)$. Then by the definition of $m_0$ clearly $h(w_0) \in \{v_i:i \leq m_0\}$, moreover $h'^{-1}(h(w_0)) \not =h^{-1}(h(w_0))=w_0$. Consequently, $h^{-1}|_{\{v_i:i \leq m_0\}} \not = h'^{-1}|_{\{v_i:i \leq m_0\}}$. By $M \subset  \{v_0,\dots,v_{m_0}\}$ and using $h,h',h^{-1},h'^{-1} \in \mc{K}_1$ we have that there exist indices $i,j <k$ so that $h|_{\{v_i:i \leq m_0\}}=p_i$ and $h'|_{\{v_i:i \leq m_0\}}=p'_i$ and $h ^{-1}|_{\{v_i:i \leq m_0\}}=p_j$ and $h'^{-1}|_{\{v_i:i \leq m_0\}}=p'_j$, consequently, by the choice of $(A_i,A'_i)$ and $(A_j,A'_j)$ we obtain $h(v) \not = h'(v)$ and $h^{-1}(v) \not = h'^{-1}(v)$, which finishes the proof of the theorem. 
	\end{proof}
	
	\subsection{Translation of compact sets, special case}
	
	In this subsection we will prove that certain types of conjugacy classes are compact biters.

	\begin{definition}
		\label{d:star0}
		Let $f \in \aut(\mc{R})$. We say that $f$ has property $(*)_0$ (resp. $(*)_1$) if 
		\begin{itemize}
			\item $f$ has only finitely many finite orbits and infinitely many infinite orbits,
			\item  for every finite set $M \subset V$ and $\tau:M \to 2$ there exists a $v$ that realizes $\tau$, $v \not \in \mc{O}^f(M)$ and $v \lnot Rf(v)$ (resp. $v Rf(v)$).
		\end{itemize}

	\end{definition}

	\begin{theorem}
		\label{t:randommain1}
		Suppose that $f$ has property $(*)_0$ or $(*)_1$ and denote by $N$ the union of finite orbits of $f$. Suppose that $\mc{K} \subset \aut(\mc{R})$ is a compact set so that for every $h \in \mc{K}$ we have $h|_N=f|_N$. Then $\mc{K}$ can be translated into the conjugacy class of $f$. 
		
		In fact, there exist $g,(\phi_h)_{h \in \mc{K}} \in \aut(\mc{R})$ so that $g|_N=\phi_h|_N=id_N$ and for every $h \in \mc{K}$ we have $\phi_h \circ h \circ g=f \circ \phi_h$. 
	\end{theorem}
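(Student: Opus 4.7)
Plan: The plan is to construct $g$ and the family $\{\phi_h\}_{h \in \mc{K}}$ simultaneously by a back-and-forth on an enumeration of $V$, so that every vertex is eventually added to the domain and range of $g$ and of every $\phi_h$. Since $\mc{K}$ is uncountable, we cannot treat each $\phi_h$ separately; the key observation is that by Fact \ref{f:compactchar} the restriction $\mc{K}|_L$ is finite for every finite $L \subset V$, so the finite approximation to $\phi_h$ depends only on $h|_L$ for a large enough finite $L$, leaving only finitely many ``types'' to handle at each stage.

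At stage $n$ we maintain a finite set $L_n \subseteq V$ (with $L_0 = N$), a finite partial automorphism $g_n \supseteq \id_N$ of $\mc{R}$ with $\rd(g_n) \subseteq L_n$, and for each $p$ in the finite set $\mc{K}|_{L_n^*}$ a finite partial automorphism $\phi_{p,n} \supseteq \id_N$. The invariants are: (a) $p \supseteq p' \Rightarrow \phi_{p,n} \supseteq \phi_{p',n}$; and (b) whenever $g_n(v) = w$ and $p(w)$ is defined with $v \in \dom(\phi_{p,n})$, then $p(w) \in \dom(\phi_{p,n})$ and $\phi_{p,n}(p(w)) = f(\phi_{p,n}(v))$. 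In the representative extension step where we add $v \in V$ to $\dom(g)$, we apply the Splitting Lemma to $\mc{K}_1 = \mc{K} \cup \mc{K}^{-1}$, the finite set $M = L_n \cup \mc{K}^{-1}(v) \cup \{v\}$, and a map $\tau: M \to 2$ whose restriction to $\ran(g_n)$ forces $g_n \cup \{(v,w)\}$ to remain a partial isomorphism and whose value on $\mc{K}^{-1}(v) \cup \{v\}$ is $0$ or $1$ according to whether $f$ satisfies $(*)_0$ or $(*)_1$. The output is a splitting point $w$; set $L_{n+1} = L_n \cup \{w\}$, so that the refinements $p' \in \mc{K}|_{L_{n+1}^*}$ of any $p$ take distinct values at $w$ (and at $w$ read via $\mc{K}^{-1}$), allowing their $\phi_{p'}$ to be extended independently. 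For each such $p'$, property $(*)_0$ or $(*)_1$ of $f$ produces a vertex $u$ realizing over $\ran(\phi_{p,n})$ the $R$-type that $v$ has over $\dom(\phi_{p,n})$, lying outside the finitely many $f$-orbits already meeting $\ran(\phi_{p,n})$. Setting $\phi_{p',n+1}(v) = u$ and $\phi_{p',n+1}(p'(w)) = f(u)$ respects the graph structure (the edge between $u$ and $f(u)$, prescribed by $(*)_\varepsilon$, matches the edge between $v$ and $p'(w)$ secured by the choice of $\tau$) and preserves invariant (b). The analogous moves adding a vertex to $\ran(g)$, $\dom(\phi_p)$, or $\ran(\phi_p)$ are handled symmetrically, using $f^{-1}$ when appropriate.

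The main technical obstacle is coordinating the many simultaneous requirements on a single choice of $w$ — a prescribed graph-type over $\ran(g_n)$, the splitting property with respect to all of $\mc{K}_1$, and uniform $R$-behavior between $v$ and every possible $h(w)$ so that the fixed $u R f(u)$ from $(*)_\varepsilon$ matches — together with the finitely many subsequent choices of $u$ for each refinement $p'$; the Splitting Lemma combined with property $(*)_\varepsilon$ is tailor-made to deliver exactly this. Passing to the limit, $g = \bigcup_n g_n$ and, for each $h \in \mc{K}$, $\phi_h = \bigcup_n \phi_{h|_{L_n^*},n}$ are automorphisms of $\mc{R}$ fixing $N$ pointwise, and invariant (b) gives $\phi_h \circ h \circ g = f \circ \phi_h$; hence $hg$ is conjugate to $f$ for every $h \in \mc{K}$, placing $\mc{K} g$ inside the conjugacy class of $f$.
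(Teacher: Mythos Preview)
Your high-level architecture matches the paper's: a simultaneous back-and-forth building $g$ and the $\phi_h$, tracking only the finitely many types $p \in \mc{K}|_{L_n^*}$ at each stage, with the Splitting Lemma supplying $w = g(v)$ and property $(*)_\varepsilon$ supplying targets for $\phi_h$. The gap is in the sentence ``Setting $\phi_{p',n+1}(v) = u$ and $\phi_{p',n+1}(p'(w)) = f(u)$ respects the graph structure.''

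You chose $u$ so that $u \,R\, \phi_p(x) \iff v \,R\, x$ for $x \in \dom(\phi_{p,n})$, and so that the single edge $u \,R\, f(u)$ matches $v \,R\, p'(w)$. But for $\phi_{p',n+1}$ to be a partial automorphism you also need $f(u) \,R\, \phi_p(x) \iff p'(w) \,R\, x$ for every old $x$, i.e.\ $u \,R\, f^{-1}(\phi_p(x)) \iff w \,R\, h^{-1}(x)$. Nothing in your choice of $u$ controls its edges to $f^{-1}(\ran(\phi_{p,n}))$, and nothing in your $\tau$ (whose domain is only $L_n \cup \mc{K}^{-1}(v) \cup \{v\}$) controls the edges of $w$ to $h^{-1}(\dom(\phi_{p,n})) \subset L_n^*$. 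Once $\ran(\phi_p)$ contains two points in the same $f$-orbit --- which happens the moment you put both $u$ and $f(u)$ in --- these unaddressed constraints interact with the ones you did impose, and there is no reason for them to be consistent.

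The paper handles exactly this by carrying a much heavier inductive invariant, the ``good triple'' of Definition~\ref{d:goodtriple} with ten clauses, and in particular by isolating the obstructions to compatibility as ``bad'' and ``ugly'' situations (Definitions~\ref{d:badsitu} and~\ref{d:uglysitu}), which are explicitly excluded throughout. Maintaining that exclusion under each kind of extension is the real content of Lemmas~\ref{l:forward}, \ref{l:backward}, \ref{l:extend}, \ref{l:extspec}; these are not mutually symmetric, and the step that extends $\dom(\phi_h)$ to a prescribed vertex (Lemma~\ref{l:extend}) is by far the most delicate, not something that can be waved through as ``handled symmetrically.'' Your invariants (a) and (b) are necessary but not sufficient; without something equivalent to the bad/ugly bookkeeping the construction can get stuck.
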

	
	Clearly, by the symmetry it is enough to show this theorem for automorphisms having property $(*)_0$. 
	
	The idea of the proof is rather simple: we construct $g$ and $(\phi_h)_{h \in \mc{K}}$ inductively from finite approximations, every time extending the approximations of $g$ by splitting points for $\mc{K}$ and certain finite sets, we also select the new points from far enough (see below the definition of $d_{\mc{K}}$). Using this, we will be able to ensure that the requirements on the extensions of the approximations of $\phi_h$ will not interfere. 
	
	In order to prove the theorem we need a couple of definitions.
	\begin{definition}
		Let us define a new graph with the same vertex set as $\mc{R}$ as follows. Let 
		\[xEy \iff (\exists h \in \mc{K})(h(x)=y \text{ or } h^{-1}(x)=y).\]
		We will denote by $d_{\mc{K}}(x,y)$ the length of the shortest path between $x$ and $y$ and let it be equal to $\infty$ if there is no such path. For sets of vertices $M,M'$ let
		\[d_{\mc{K}}(M,M')=\min \{d_{\mc{K}}(x,y):x \in M, y \in M'\}.\]
		We will denote by $d_{\mc{K}}(M,x)$ the number $d_{\mc{K}}(M,\{x\})$.
	\end{definition}
	
	Note that the function $d_{\mc{K}}:V \times V \to \omega \cup \{\infty\}$ is an extended metric.
	
	\begin{corollary}
		\label{c:splitting}
		Suppose that $M$ is a finite set and $\tau:M \to 2$ is a function. There exists a vertex $v$ that is a splitting point for $M$ and $\mc{K}$, realizes $\tau$ and $d_{\mc{K}}(v,M)>3$.
	\end{corollary}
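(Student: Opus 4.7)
The plan is to reduce the corollary directly to the Splitting Lemma (Lemma~\ref{l:split}) by forbidding a large enough initial segment of the enumeration of $V$.

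First I would check that the $d_{\mc{K}}$-ball of radius $3$ around $M$, namely
\[ M' = \{ v \in V : d_{\mc{K}}(v, M) \leq 3 \}, \]
is finite. This is where compactness of $\mc{K}$ is used: by Fact~\ref{f:compactchar}, for any finite $N \subset V$ the set $\mc{K}(N) \cup \mc{K}^{-1}(N)$ is finite, and the set of vertices at $d_{\mc{K}}$-distance at most $1$ from $N$ is exactly $N \cup \mc{K}(N) \cup \mc{K}^{-1}(N)$. Iterating this three times starting from the finite set $M$ shows that $M'$ is finite.

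Next, since $V = \{v_0, v_1, \dots\}$ is enumerated, I would pick $n \in \omega$ large enough that $M' \subseteq \{v_i : i \leq n\}$. Now apply the Splitting Lemma to $\mc{K}$, $M$, $\tau$, and this $n$; it produces a vertex $v \in V \setminus \{v_i : i \leq n\}$ that is a splitting point for $M$ and $\mc{K}$ and that realizes $\tau$. By the choice of $n$, $v \notin M'$, which is precisely the statement $d_{\mc{K}}(v, M) > 3$.

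There is no real obstacle here: the only subtlety is the finiteness of the $d_{\mc{K}}$-ball, which is an immediate consequence of the compactness characterization in Fact~\ref{f:compactchar}. Everything else is bookkeeping — extending the "avoid an initial segment" clause of the Splitting Lemma to the quantitative "avoid a $d_{\mc{K}}$-neighborhood" form needed in the subsequent construction of $g$ and $(\phi_h)_{h \in \mc{K}}$.
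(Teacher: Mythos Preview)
Your proposal is correct and is essentially the same argument as the paper's: both show that the $d_{\mc{K}}$-ball of radius $3$ around $M$ is finite by compactness (the paper writes this as $M \cup \bigcup_{h_1,h_2,h_3 \in \mc{K} \cup \mc{K}^{-1}} h_1 h_2 h_3(M)$, you describe it as iterating Fact~\ref{f:compactchar} three times), choose $n$ so that this ball lies in $\{v_i : i \le n\}$, and then invoke the Splitting Lemma. Your phrasing of the finiteness step is if anything slightly cleaner than the paper's, but there is no substantive difference.
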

	
	\begin{proof}
		By the compactness of $\mc{K} \cup \mc{K}^{-1}$ the set \[M \cup \bigcup_{h_1,h_2,h_3 \in \mc{K} \cup \mc{K}^{-1}}h_1h_2h_3(M)\] is finite, so we can take an $n \in \omega$ so that it is contained in $\{v_i: i \leq n\}$. By the Splitting Lemma (Lemma \ref{l:split}) there exists a $v$ so that $v \not \in \{v_i: i \leq n\}$ and $v$ realizes $\tau$. Clearly, $d_{\mc{K}}(v,M)>3$ holds as well.
	\end{proof}
	
	\begin{definition}
		Let $g$ be a finite partial automorphism and $w \in V$. Suppose that for every $i \in \mathbb{Z} \setminus \{0\}$ we have $g^i(w) \not = w$. Then we will denote by $e(w,g)$ the vertex $g^{i}(w)$ so that \[i=\max \{j \in \omega:g^{j}(w)\text{ is defined,}\]\[\text{ i. e.,  for every $k$ with $0  \leq k <j$ we have $g^k(w) \in \dom(g)$} \},\] and similarly we denote by $b(w,g)$ the vertex $g^{-i}(w)$ so that \[i=\max \{j \in \omega:g^{-j}(w)\text{ is defined,}\]\[\text{ i. e.,  for every $k$ with $0  \leq k <j$ we have $g^{-k}(w) \in \ran(g)$} \},\]
		or equivalently, the vertex $e(w,g^{-1})$.
	\end{definition}
	Note that if $w \not \in \dom(g)$ then $e(w,g)=w$ and also if $w \not \in \ran(g)$ then $b(w,g)=w$.

	In the next two definitions we will describe possible set-ups that could be obstacles to carry out the inductive procedure.
	
	\begin{definition}
		\label{d:badsitu}
		Let $h,h' \in \mathcal{K}$ and $g$, $\phi_{h}$ and $\phi_{h'}$ be partial automorphisms. We call  the following set-up an \textit{$(h,h',\phi_h,\phi_{h'},g)$ bad situation}: there exist vertices $x,x',y \in V$ so that
		
		\begin{enumerate}[label=(B\arabic*)]
			
			\item \label{prt:badend} $x \in N$ or $x=b(x,h \circ g)$,\\ $x' \in N$ or $x'=b(x',h' \circ g)$,\\ $y \in N$ or $y=e(y,h \circ g)=e(y,h' \circ g)$,
			\item \label{prt:badimage} $h^{-1}(x)=h'^{-1}(x')$,
			\item \label{prt:baddist} 
			\begin{enumerate}[ref=(B3.\alph*)]
				\item \label{prt:baddista} $x,y \in \dom(\phi_h)$, $x',y \in \dom(\phi_{h'})$
				\item \label{prt:baddistb} it is not true that  \[\phi_{h}(x) R f(\phi_{h}(y)) \iff \phi_{h'}(x') R f(\phi_{h'}(y)) .\]
			\end{enumerate}

		\end{enumerate}

		In case we would like to specify the roles of vertices, we will also call such a set-up an \textit{$(h,h',\phi_h,\phi_{h'},g,x,x',y)$ bad situation}, or when clear from the context, an \textit{$(h,h',x,x',y)$ bad situation.}
		
	\end{definition}

	\begin{definition}
		\label{d:uglysitu}
		Let $h,h' \in \mathcal{K}$ and $g$, $\phi_{h}$ and $\phi_{h'}$ be partial automorphisms. We call  the following set-up an \textit{$(h,h',\phi_h,\phi_{h'},g)$ ugly situation}: there exist vertices $x,y \in V$ so that $(h,h',\phi_h,\phi_{h'},x,y,y)$ has Properties \ref{prt:badend}, \ref{prt:badimage} of bad situations,
		\begin{enumerate}[label=(U\arabic*)]
			\item \label{prt:uglynew} $y \not \in \dom(\phi_{h'}),$
			\item \label{prt:uglydist} $x,y \in \dom(\phi_h)$ and $\phi_{h}(x) R f(\phi_{h}(y))$.
		\end{enumerate}

		We will use the conventions used at bad situations in the naming of ugly situations as well.
	\end{definition}
	
	Now we are ready to formulate our inductive assumptions. We will use the notations fixed in \ref{n:impo}.
	
	\begin{definition}
		\label{d:goodtriple}
		We say that the triple $(g,(\phi_h)_{h \in \mc{K}},M)$ is \textit{good} if the following conditions hold for every $h,h' \in \mc{K}$:
		\begin{enumerate}[label=(\roman*)]
			\item \label{prt:auto} $M$ is a finite set of vertices, $g$ and $\phi_h$ are partial automorphisms, 
			\item \label{prt:dom} $\dom(\phi_{h}) \supset \rd(h \circ g)$ and  $N \cup \rd(g) \cup \dom(\phi_{h})\subset M$, 
			\item \label{prt:nnoying} $N \subset \rd(g)$, $N \subset \dom(\phi_h)$, $g|_N=\phi_h|_N=id|_N$, 
			\item \label{prt:conj} $\phi_{h} \circ h \circ g= f \circ \phi_{h}$, i. e., whenever both of the sides of the equation are defined then they are equal,
			\item \label{prt:distorb} for vertices $w, w' \in \dom(\phi_h) \setminus N$ we have that $\mc{O}^{h \circ g}(w) \not = \mc{O}^{h \circ g}(w')$ implies $\mc{O}^{f}(\phi_h(w)) \not = \mc{O}^{f}(\phi_h(w'))$,
			\item \label{prt:forth} if $h|_{M^*}=h'|_{M^*}$ then $\phi_{h}=\phi_{h'}$,
			
			\item \label{prt:nocirc} if $w \in V \setminus N$ then for every $i \in \mathbb{Z} \setminus \{0\}$ we have $(h \circ g)^i(w) \not =w$, in particular, the functions $b(w,h \circ g), e(w,h \circ g)$ are defined for every $w \in V \setminus N$,
			\item \label{prt:onecase} for every $w \in \dom(\phi_h)$ if $f(\phi_h(w)) R \phi_h(w)$ and $h'^{-1}(w)=h^{-1}(w)$ hold then $w \in \dom(\phi_{h'})$,
			\item \label{prt:uglysitu}  there are no $(h,h',\phi_h,\phi_{h'},g)$ ugly situations,
			\item \label{prt:badsitu} there are no $(h,h',\phi_h,\phi_{h'},g)$ bad situations.

		\end{enumerate}
	\end{definition}
	We start the proof with a couple of trivial observations.
	
	\begin{remark}
		\label{rm:extm} It is easy to see that if $(g,(\phi_h)_{h \in \mc{K}},M)$ is a good triple and $\f{M} \supset M$ is finite then $(g,(\phi_h)_{h \in \mc{K}},\f{M})$ is also a good triple. 
	\end{remark}
	
	\begin{lemma}
		\label{l:ind0} $(id_N,(id_N)_{h \in \mathcal{K}},N)$ is a good triple.
	\end{lemma}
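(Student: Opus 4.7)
The plan is to verify each of the ten clauses (i)--(x) of Definition \ref{d:goodtriple} for the candidate triple $(\id_N,(\id_N)_{h \in \mc{K}},N)$. Three structural facts drive everything: $N$ is finite (being a union of finitely many finite orbits of $f$, by the first clause of property $(*)_0$); $N$ is $f$-invariant; and by the standing hypothesis of Theorem \ref{t:randommain1}, $h|_N = f|_N$ for every $h \in \mc{K}$, so in particular $h(N) = N$.

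First I would dispatch the bookkeeping clauses. Since $g = \id_N$, the composition $h \circ g = h|_N$ has both domain and range equal to $N$, whence $\rd(h \circ g) = N = \dom(\phi_h) \subset M$; combined with $\rd(g) = N$ and $\phi_h|_N = g|_N = \id|_N$ this settles (i)--(iii). Clause (iv) reduces to the tautology $h|_N = f|_N$. Clause (v) is vacuous because $\dom(\phi_h) \setminus N = \emptyset$, and (vi) is trivial since $\phi_h = \id_N$ does not depend on $h$. Clause (vii) is vacuous for $w \notin N$ because $(h \circ g)^i(w)$ is then undefined for every $i \neq 0$, while (viii) follows because any $w \in \dom(\phi_h) = N$ automatically lies in $\dom(\phi_{h'}) = N$.

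The only clauses warranting a short argument, and thus the closest thing to an obstacle, are the absence of ugly and bad situations. An ugly situation would require a vertex $y$ that is at once in $\dom(\phi_h) = N$ (by \ref{prt:uglydist}) and not in $\dom(\phi_{h'}) = N$ (by \ref{prt:uglynew}), which is impossible, so (ix) holds. For (x), condition \ref{prt:baddista} puts $x, x', y$ into $N$; then \ref{prt:badimage} together with $h|_N = h'|_N = f|_N$ yields $f^{-1}(x) = f^{-1}(x')$, forcing $x = x'$, at which point \ref{prt:baddistb} degenerates into $\lnot(x R f(y) \iff x R f(y))$, a contradiction. Overall I expect no genuine difficulty: the lemma is the base case of an induction, and the choice $M = N$ is engineered precisely so that the identity map trivially satisfies every condition of goodness.
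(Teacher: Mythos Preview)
Your proof is correct and follows essentially the same line as the paper's own argument: the paper simply declares clauses (i)--(viii) obvious and then, for (ix), observes that $\dom(\phi_h)=\dom(\phi_{h'})$ makes the conjunction of \ref{prt:uglynew} and \ref{prt:uglydist} impossible, while for (x) it uses \ref{prt:baddista} to place $x,x'$ in $N$, then \ref{prt:badimage} together with $h|_N=h'|_N=f|_N$ to force $x=x'$, collapsing \ref{prt:baddistb} to a tautology. Your treatment of (i)--(viii) is merely more explicit, and your arguments for (ix) and (x) are the same as the paper's.
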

	\begin{proof}
		Properties \ref{prt:auto}-\ref{prt:onecase} are obvious. We check the remaining two properties:
		\begin{enumerate}[label=(\roman*)]
			\setcounter{enumi}{8}
			\item note that if $\dom(\phi_h)=\dom(\phi_{h'})$ then there are no $(h,h',\phi_h,\phi_{h'},g)$ ugly situations as the conjunction of \ref{prt:uglynew} and \ref{prt:uglydist} cannot be true,
			\item if we had an $(h,h',id_N,id_N,id_N,x,x',y)$ bad situation, then by property \ref{prt:baddista} we would have $x,x' \in N$ so by \ref{prt:badimage},  $h|_N=h'|_N=f|_N$ and the fact that $N$ is the union of orbits of $f$ clearly $x=x'$, but then \ref{prt:baddistb} could not be true.
		\end{enumerate}
		
	\end{proof}
	\begin{lemma}
		\label{l:firsttriv}
		Let $(g,(\phi_h)_{h \in \mc{K}},M)$ be a good triple and $h,h' \in \mc{K}$. Suppose that $g \subset \f{g}$, $\phi_h \subset \f{\phi}_h$ and $\phi_{h'} \subset \f{\phi}_{h'}$ are partial automorphisms. Suppose moreover that if $\phi_h \subsetneqq \f{\phi}_h$ then we have $\{v\}=\dom(\f{\phi}_h) \setminus \dom({\phi}_h)$ and similarly if $\phi_{h'} \subsetneqq \f{\phi}_{h'}$ then we have $\{v'\}=\dom(\f{\phi}_{h'}) \setminus \dom(\phi_{h'})$ so that 
		
		\begin{enumerate}[label=(\alph*)]
			\item \label{prt:weaksplit} $h^{-1}(v) \not =h'^{-1}(v)$,
			\item \label{prt:distance} $d_{\mc{K}}(v,M)>2$ and $d_{\mc{K}}(v',M)>2$
		\end{enumerate}
		then 
		\begin{enumerate}
			\item \label{lp:ugly} there are no $(h,h',\f{\phi}_h,\f{\phi}_{h'},\f{g})$ ugly situations.
			\item \label{lp:bad} if for some $x,x',y$ there exists an $(h,h',\f{\phi}_h,\f{\phi}_{h'},\f{g},x,x',y)$ bad situation then either
			\begin{itemize}
				\item $y \in M$, $x=v$ and $x'=v'$ OR
				\item $y=v=v'$ and $x,x' \in M$,
			\end{itemize}
			(in particular, $v$ and $v'$ must exist).
		\end{enumerate}

	\end{lemma}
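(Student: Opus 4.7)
The plan is a careful case analysis of whether the vertices $x,x',y$ in a putative new bad situation (or $x,y$ in a putative new ugly situation) already lie in the old domains or equal one of the new vertices $v,v'$. The first observation is that extending $g$ to $\f{g}$ only tightens condition \ref{prt:badend}: since every $h\in\mc{K}$ is total, $e(w,h\circ\f{g})=w$ amounts to $w\notin\dom(\f{g})$, which (as $\dom(\f{g})\supset\dom(g)$) forces $w\notin\dom(g)$ and hence $e(w,h\circ g)=w$; symmetrically for $b$ and for $h'$, while condition \ref{prt:nocirc} takes care of the implicit no-cycle requirement in the definitions of $b,e$. Consequently any bad or ugly configuration whose vertices all lie in the old domains would witness failure of goodness of the original triple, contradicting the hypothesis. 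So any new configuration must involve at least one of $v,v'$.

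The second ingredient is the distance estimate $d_{\mc{K}}(x,x')\le 2$ furnished by \ref{prt:badimage}: writing $x'=h'(h^{-1}(x))$ exhibits an $E$-path of length at most two from $x$ to $x'$. Combined with hypothesis \ref{prt:distance} and with \ref{prt:dom} (which gives $\dom(\phi_h)\cup\dom(\phi_{h'})\subset M$), this rules out every ``mixed'' configuration in which exactly one of $x,x'$ equals a new vertex while the other lies in $M$, for such a configuration would force $d_{\mc{K}}(v,M)\le 2$ or $d_{\mc{K}}(v',M)\le 2$. For the same reason $v,v'\notin M$, so if $y\in\dom(\f{\phi}_h)\cap\dom(\f{\phi}_{h'})$ happens to equal $v$ or $v'$, then membership in whichever extension it is not ``native'' to forces $v=v'$.

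To establish \ref{lp:ugly}, note that in an ugly situation $x'=y$, so applying the distance argument to the pair $(x,y)$ leaves only two possibilities: $x=y=v$ (which gives $h^{-1}(v)=h'^{-1}(v)$, contradicting \ref{prt:weaksplit}), or $x,y$ both old (which descends to an old ugly situation, since \ref{prt:badend}, \ref{prt:uglynew}, and \ref{prt:uglydist} all transfer from the extended triple to the original one when the vertices are old). Both lead to contradictions, so no new ugly situations exist.

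For \ref{lp:bad} I would branch on whether $y$ is old. If $y$ is old then $y\in M$; goodness of the original triple rules out all three vertices being old, and the distance argument eliminates every mixed case, leaving only $x=v$, $x'=v'$---the first alternative. If $y$ is new then the analysis of the previous paragraph forces $y=v=v'$; the distance argument kills the mixed cases for $(x,x')$ and \ref{prt:weaksplit} kills the diagonal case $x=x'=v$, leaving $x,x'\in M$---the second alternative. The main bookkeeping obstacle is verifying the inheritance of \ref{prt:badend} for both $b$ and $e$ and for both $h$ and $h'$, and carefully tabulating all seven non-contradictory sub-cases, but once the distance estimate and the inheritance observation are in place the case analysis is essentially forced.
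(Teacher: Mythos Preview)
Your proposal is correct and follows essentially the same approach as the paper's proof. Both arguments hinge on the same three ingredients: (i) the inheritance of \ref{prt:badend} from $\f{g}$ to $g$ (so that a configuration with all vertices in the old domains would already have been a bad/ugly situation for the original good triple), (ii) the bound $d_{\mc{K}}(x,x')\le 2$ coming from \ref{prt:badimage}, and (iii) a case split on which of the vertices equal the new points $v,v'$, with the distance hypothesis \ref{prt:distance} killing the mixed cases and hypothesis \ref{prt:weaksplit} killing the diagonal case. Your organization of the case analysis (branching first on whether $y$ is old or new) is the same as the paper's, and your explicit remark that \ref{prt:nocirc} underwrites the well-definedness of $b$ and $e$ is a detail the paper leaves implicit.
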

	\begin{proof}
		
		First notice that in the definition of both ugly and bad situations the automorphism $g$ is only used in property \ref{prt:badend}, and this property does not use $\phi_h$ or $\phi_{h'}$. Moreover, by the definition of functions $b$ and $e$ clearly if $\f{g} \supset g$ and $x=b(x,h \circ \f{g})$ then  $x=b(x,h \circ g)$ and similarly for $e$. Therefore, if \ref{prt:badend} holds for $(h,h',\f{\phi}_h,\f{\phi}_{h'},\f{g},x, x',y)$ then it also holds for $(h,h',\phi_h,\phi_{h'},g,x, x',y)$. 
		
		Notice that using this observation about \ref{prt:badend} we can conclude that if $x,y \in \dom(\phi_h)$ and $x',y \in \dom(\phi_{h'})$ and there is an $(h,h',\f{\phi}_h,\f{\phi}_{h'},\f{g},x, x',y)$ then it is also an $(h,h',\phi_h,\phi_{h'},g,x, x',y)$ bad situation (and similarly with $x,y \in \dom(\phi_h)$ for ugly situations). So in order to prove the impossibility of an $(h,h',\f{\phi}_h,\f{\phi}_{h'},\f{g},x, x',y)$ bad situation, since $(g,(\phi_h)_{h \in \mc{K}},M)$ is a good triple it is enough to show that $x,y \in \dom(\phi_h)$ and $x',y \in \dom(\phi_{h'})$ (and analogously for ugly situations).
		
		Now we prove the statements of the lemma.

		(\ref{lp:ugly}) If there exists a  $(h,h',\f{\phi}_h,\f{\phi}_{h'},\f{g},x,y)$ ugly situation then by the above argument and the fact that we have (possibly) extended $\phi_h$ only to $v$ and $\phi_{h'}$ only to $v'$, we get $\{x,y\} \cap \{v,v'\} \not = \emptyset$. Moreover, from the definition of an ugly situation \ref{prt:badimage} holds for $x$ and $x'=y$ so clearly $d_{\mc{K}}(x,y) \leq 2$. This implies by assumption \ref{prt:distance} that $x, y \not \in M$. Using \ref{prt:uglydist} we obtain  $\{x, y\} \subset \dom(\f{\phi}_h) \setminus M$ and by Property \ref{prt:dom} of good triples $\dom(\phi_h) \subset M$, so $\{x,y\} \subset \dom(\f{\phi}_h) \setminus \dom(\phi_h)=\{v\}$. But \ref{prt:badimage} gives that $h^{-1}(x)=h'^{-1}(y)$, so $h^{-1}(v)=h'^{-1}(v)$ contradicting the assumption \ref{prt:weaksplit} of the lemma.
		
		Now we prove (\ref{lp:bad}). 
		
		Suppose $y \not \in M$. Then by \ref{prt:baddista} we have $y \in \dom(\f{\phi}_h) \cap \dom(\f{\phi}_{h'}) \setminus M$, which is only possible using Property \ref{prt:dom} of good triples if $y=v=v'$. Since $x \in \dom(\f{\phi}_{h})$ clearly, $x \not \in M$ can happen only if $x=v=y$. Then, by \ref{prt:badimage} we have $d_{\mc{K}}(x,x') \leq 2$, so $x' \in  \dom(\f{\phi}_h) \setminus M$, therefore $x'=v'=y$. But then, using again  \ref{prt:badimage} we get $h^{-1}(v)=h^{-1}(x)=h'^{-1}(x')=h'^{-1}(v)$, contradicting \ref{prt:weaksplit}. So $x \in M$ and a similar argument shows $x' \in M$. 
		
		So assume $y \in M$, in particular by \ref{prt:baddista} and the assumptions of the lemma $y \in \dom(\phi_{h}) \cap \dom(\phi_{h'})$. Suppose now that $x \not = v$ (with the possibility that $v$ does not exists). Since by \ref{prt:baddista} we have $x \in \dom(\f{\phi}_h)$ and $\dom(\f{\phi}_h) \subset \{v\} \cup M$ clearly $x \in M$. Using property \ref{prt:badimage} and \ref{prt:baddista} we get $d_\mc{K}(x,x') \leq 2$ and $x' \in \dom(\f{\phi}_{h'})$ but by assumption \ref{prt:distance} this can happen only if $x' \in M$, so $x' \in \dom(\phi_{h'})$. Therefore $x,y \in \dom(\phi_{h})$ and $x', y \in \dom(\phi_{h'})$ which is impossible. Thus, $x=v$ and similarly $x'=v'$.
	\end{proof}
	Now we prove a lemma which ensures that a good triple can be extended. 
	\begin{lemma}
		\label{l:forward}
		Suppose that $(g,(\phi_h)_{h \in \mc{K}},M)$ is a good triple and $v \in \bigcap_{h \in \mathcal{K}}\dom(\phi_{h})$. Then there exist extensions $\f{g} \supset g$, $\f{\phi}_{h} \supset \phi_{h}$ and $\overline{M} \supset M$ so that $(\f{g}, (\f{\phi}_h)_{h \in \mc{K}},\f{M})$ is a good triple and $v \in \dom(\f{g})$. 
	\end{lemma}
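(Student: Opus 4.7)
My plan is to extend $g$ by the single new pair $(v, w)$, a choice which via the conjugacy identity $\phi_h \circ h \circ g = f \circ \phi_h$ forces $\f{\phi}_h(h(w)) := f(\phi_h(v))$ for every $h \in \mc{K}$. Setting $\f{g} = g \cup \{(v,w)\}$ and $\f{\phi}_h = \phi_h \cup \{(h(w), f(\phi_h(v)))\}$, and taking $\f{M}$ to be $M$ together with $\{w\}$, $\{h(w):h\in\mc{K}\}$ and $\{f(\phi_h(v)):h\in\mc{K}\}$ (all finite by Fact \ref{f:compactchar} and by property \ref{prt:forth} of the old triple, which makes $\{\phi_h : h \in \mc{K}\}$ finite), the entire construction reduces to a careful choice of $w$, produced via Corollary \ref{c:splitting}.

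Fix a finite $M' \supset M^* \cup \bigcup_{h\in\mc{K}} h^{-1}(M)$ and define the type $\tau: M' \to 2$ that $w$ must realize by declaring $\tau(y) = 1$ iff for some $h \in \mc{K}$ with $x := h(y) \in \dom(\phi_h)$ we have $f(\phi_h(v)) R \phi_h(x)$. The \textbf{main obstacle} is proving $\tau$ well-defined: if $h(y) \in \dom(\phi_h)$ and $h'(y) \in \dom(\phi_{h'})$, one needs
\[ f(\phi_h(v))\, R\, \phi_h(h(y)) \iff f(\phi_{h'}(v))\, R\, \phi_{h'}(h'(y)). \]
With $x = h(y)$, $x' = h'(y)$ this is clause \ref{prt:baddistb} of Definition \ref{d:badsitu} applied to $(x, x', v)$; property \ref{prt:badsitu} of the old triple delivers it provided \ref{prt:badend} holds. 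For $v$ this is automatic since $v \notin \dom(g)$ yields $e(v, h \circ g) = v$ for every $h$. For $x, x'$ the chain-beginning case is immediate, and the general case is reduced to it by walking back along the $(h \circ g)$-chain from $x$ to $b(x, h \circ g)$ using $\phi_h((h \circ g)(z)) = f(\phi_h(z))$, together with \ref{prt:distorb} to align orbit structures. Having fixed $\tau$, Corollary \ref{c:splitting} delivers a splitting point $w$ for $M'$ and $\mc{K}$ realizing $\tau$ with $d_\mc{K}(w, M') > 3$, and property $(*)_0$ of $f$ allows the additional choices $w \notin N \cup \rd(g)$ and $f(\phi_h(v))$ in fresh infinite $f$-orbits. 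The splitting property gives $h(w) = h'(w) \Rightarrow h|_{M^*} = h'|_{M^*} \Rightarrow \phi_h = \phi_{h'}$ by old \ref{prt:forth}, so the $\f{\phi}_h$ never disagree.

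The ten clauses of Definition \ref{d:goodtriple} are then verified as follows. Clauses \ref{prt:auto}, \ref{prt:dom}, \ref{prt:nnoying}, \ref{prt:conj} are immediate; edge preservation at the new point $h(w)$ is forced by $\tau$ for $M'$-neighbours and follows from the $d_\mc{K}$-distance for the rest. Clause \ref{prt:forth} uses that $w \in \f{M}$ determines $h(w)$. Clauses \ref{prt:distorb} and \ref{prt:nocirc} exploit the fresh-orbit placement from $(*)_0$. Clause \ref{prt:onecase} at the new point $h(w)$ is automatic: its hypothesis $h'^{-1}(h(w)) = h^{-1}(h(w)) = w$ forces $h'(w) = h(w)$, whence $h(w) \in \dom(\f{\phi}_{h'})$ by construction. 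The remaining clauses \ref{prt:uglysitu} and \ref{prt:badsitu} are the payoff of Lemma \ref{l:firsttriv}, applied pairwise for $h, h' \in \mc{K}$ with $v^{(h)} := h(w)$, $v^{(h')} := h'(w)$: hypothesis \ref{prt:weaksplit} becomes $h(w) \neq h'(w)$ (the splitting condition) and \ref{prt:distance} becomes $d_\mc{K}(h(w), M), d_\mc{K}(h'(w), M) > 2$ (from $d_\mc{K}(w, M') > 3$). The lemma narrows residual bad situations to $y \in M$, $x = h(w)$, $x' = h'(w)$; there, $\f{\phi}_h(h(w)) = f(\phi_h(v))$ combined with $f$-invariance reduces clause \ref{prt:baddistb} to the tautology $vRy \iff vRy$, while the degenerate case $h(w) = h'(w)$ forces $\f{\phi}_h = \f{\phi}_{h'}$ and is dispatched as in Lemma \ref{l:ind0}.
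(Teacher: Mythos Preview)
Your plan matches the paper's: pick the new image $w=\bar v$ via Corollary \ref{c:splitting} as a splitting point for $M^*$ realizing the type forced by the conjugacy identity, set $\bar g = g \cup \langle v,\bar v\rangle$, $\bar\phi_h = \phi_h \cup \langle h(\bar v), f(\phi_h(v))\rangle$, and verify the ten clauses. Your well-definedness argument for $\tau$ and your use of Lemma \ref{l:firsttriv} for \ref{prt:uglysitu}, \ref{prt:badsitu} are the paper's; your dispatching of the residual case $x=h(w)$, $x'=h'(w)$, $y\in M$ via the tautology $vRy\iff vRy$ is in fact cleaner than the paper's route (which instead argues that \ref{prt:badend} fails because $h(w)\in\ran(h\circ\bar g)$).

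There is, however, a genuine gap. You write that ``property $(*)_0$ of $f$ allows the additional choices \dots\ $f(\phi_h(v))$ in fresh infinite $f$-orbits''. But $v\in\dom(\phi_h)$ is given by hypothesis, so $f(\phi_h(v))$ is already determined; it is not a choice you can make. The injectivity of $\bar\phi_h$, i.e.\ $f(\phi_h(v))\notin\ran(\phi_h)$, must be \emph{proved} from the good-triple axioms. The paper argues: if $f(\phi_h(v))=\phi_h(w')$ then by \ref{prt:distorb} the points $v,w'$ share an $h\circ g$-orbit; since $v\notin\dom(g)$ one has $v=(h\circ g)^k(w')$ for some $k\ge 0$, whence by \ref{prt:conj} $\phi_h(v)=f^k(\phi_h(w'))$; combined with $f(\phi_h(v))=\phi_h(w')$ this forces $f^{k+1}(\phi_h(w'))=\phi_h(w')$, contradicting that $f$ has only infinite orbits off $N$. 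Relatedly, your claim that \ref{prt:distorb} and \ref{prt:nocirc} ``exploit the fresh-orbit placement from $(*)_0$'' is a misattribution: $(*)_0$ is not invoked anywhere in this lemma (it enters only in Lemma \ref{l:extend}); here \ref{prt:distorb} follows from the old \ref{prt:distorb} via $\mc{O}^f(\bar\phi_h(h(w)))=\mc{O}^f(\phi_h(v))$, and \ref{prt:nocirc} from the distance condition $d_{\mc{K}}(w,M)>3$. Finally, edge-preservation of $\bar g$ itself (not just of $\bar\phi_h$) deserves a sentence: for $u\in\ran(g)$ your $\tau$ indeed gives $\tau(u)=1\iff g^{-1}(u)Rv$, but only because $h(u)\in\ran(h\circ g)\subset\dom(\phi_h)$ and $\phi_h(h(u))=f(\phi_h(g^{-1}(u)))$ by \ref{prt:conj}, which you do not say.
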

	
	\begin{proof}
		We will find a suitable vertex $\f{v}$ and let $\f{g}=g \cup \langle v,\f{v} \rangle$.
		
		Define a map $\tau_g:\ran(g) \to 2$ as follows:
		\begin{equation}
		\label{e:randdef1}
		\tau_g(w)=1 \iff g^{-1}(w) R v,
		\end{equation}
		and maps $\tau_h:h^{-1}(\dom(\phi_h)) \to 2$ for each $h \in \mathcal{K}$ as
		\begin{equation}
		\label{e:randdef2}
		\tau_h(w)=1 \iff \phi_h(h(w)) R f(\phi_h(v)).
		\end{equation}
		\begin{claim} The maps $\tau_g, (\tau_h)_{h \in \mathcal{K}}$ are compatible, i. e., $\tau=\tau_g \cup \bigcup_{h \in \mathcal{K}} \tau_h$ is a function. 
		\end{claim}
		\begin{proof}[Proof of the Claim.]
			
			\textit{$\tau_g$ and $\tau_h$ are compatible.} Let $w \in \ran(g)=\dom(\tau_g)$ and let $h \in \mathcal{K}$ be arbitrary. Clearly, $g^{-1}(w) \in \dom(h \circ g) \subset \dom(\phi_h)$  and $(h \circ g)(g^{-1}(w)) \in \ran(h \circ g) \subset \dom(\phi_h)$ by Property \ref{prt:dom} of good triples. Therefore, we can use Property \ref{prt:conj} for $g^{-1}(w)$ (that is, in the following equation both of the sides are defined): \[(\phi_{h} \circ h \circ g)(g^{-1}(w))= (f \circ \phi_{h})(g^{-1}(w))\]
			so we get
			\begin{equation}
			\label{e:rand2}
			f^{-1}(\phi_{h} (h(w)))=   \phi_{h}(g^{-1}(w)).
			\end{equation}
			
			As $f$ is an automorphism
			\begin{equation}
			\label{e:rand1}
			\tau_h(w)=1 \iff \phi_h(h(w)) R f(\phi_h(v)) \iff f^{-1}(\phi_h(h(w))) R \phi_h(v).
			\end{equation}
			
			So by \eqref{e:rand2}, \eqref{e:rand1} and the fact that $\phi_h$ is a partial automorphism we have
			\[\tau_h(w)=1 \iff \phi_{h}(g^{-1}(w)) R \phi_h(v) \iff g^{-1}(w) R v.\]
			Comparing this equation to the definition of $\tau_g$ we obtain that $\tau_g$ and $\tau_h$ are indeed compatible.
			
			\textit{$\tau_h$ and $\tau_{h'}$ are compatible.} Now, using the first case it is enough to check compatibility for $w \not \in \ran(g)$. We will use Property \ref{prt:badsitu}, that there are no bad situations. Let us consider the sequence $(h,h',\phi_h,\phi_{h'},g,h(w),h'(w),v)$. Clearly, since $w \not \in ran(g)$, we have $h(w) \not \in \ran(h \circ g)$ thus $b(h(w),h \circ g)=h(w)$ and similarly $b(h'(w),h' \circ g)=h'(w)$. Moreover, as $v \not \in \dom(g)$ we have $e(v,h \circ g)=e(v,h' \circ g)=v$, so Property \ref{prt:badend} of bad situations hold. Moreover,  $h^{-1}(h(w))=w=h'^{-1}(h'(w))$, therefore Property \ref{prt:badimage} is also true. Clearly, by the assumptions of Lemma \ref{l:forward} we have $v,h(w) \in \dom(\phi_h)$ and $v,h'(w) \in \dom(\phi_{h'})$. Hence, as there are no bad situations Property \ref{prt:baddistb} must fail, consequently
			\[\phi_{h}(h(w)) R f(\phi_{h}(v)) \iff  \phi_{h'}(h'(w)) R f(\phi_{h'}(v)),  \]
			so, using this and the definition of $\tau_h$ and $\tau_{h'}$ we get
			\[\tau_h(w)=1 \iff \phi_h(h(w)) R f(\phi_h(v)) \iff\]\[  \phi_{h'}(h'(w)) R f(\phi_{h'}(v)) \iff \tau_{h'}(w)=1.\]
			
			This finishes the proof of the claim.
		\end{proof}
		
		Now we return to the proof of Lemma \ref{l:forward}. By Corollary \ref{c:splitting} there exists a splitting point $\f{v}$ for $M^*$ and $\mc{K}$ that realizes $\tau$ and $d_{\mc{K}}(\f{v},M^*)>3$ (in particular, by $M \subset M^*$ we have $d_{\mc{K}}(\f{v},M)>3$) extending $\tau$ to the whole $M^*$ arbitrarily if necessary. Let $\f{g}=g \cup \langle v,\f{v}\rangle$, $\f{M}=M \cup \{\f{v},h(\f{v}): h \in \mc{K}\}$ and for every $h \in \mc{K}$ let $\f{\phi}_h=\phi_h \cup \langle h(\f{v}), f(\phi_h(v)) \rangle$.
		
		We claim that $(\f{g}, (\f{\phi}_h)_{h \in \mc{K}},\f{M})$ is a good triple.
		
		\begin{enumerate}[label=(\roman*)]
			\item  By compactness $\f{M}$ is finite. We check that $\f{g}$ and $\f{\phi}_h$ are partial automorphisms. Since $d_{\mc{K}}(\f{v},M)>3$ and Property \ref{prt:dom} of good triples $\ran(g) \subset M$ so the function $\f{g}$ is injective. 
			
			We check the injectivity of the functions $\f{\phi}_h$. If for some $w$ we have 
			\begin{equation}
			\label{e:trivforw}
			\f{\phi}_h(h(\f{v}))=f(\phi_h(v))=\phi_{h}(w)
			\end{equation} then using the facts that $\phi_h|_N=id|_N$ and that $N$ is the union of the finite orbits of $f$ we can conclude that $w \in N$ would imply $\phi_h(v) \in N$, so $v \in N \subset \dom(g)$ which is impossible. So $w \not \in N$ and also $\phi_h(w) \not \in N$. 
			By \eqref{e:trivforw} we have that $\mc{O}^{f}(\phi_h(v))=\mc{O}^{f}(\phi_h(w))$ and clearly, $v \not \in N$, so using Property \ref{prt:distorb} of good triples we obtain $\mc{O}^{h \circ g}(v)=\mc{O}^{h \circ g}(w)$. Then as $v \not \in \dom(g)$ clearly $v=(h \circ g)^{k}(w)$ for some $k \geq 0$. Suppose $k>0$. Applying $\phi_h$ to both sides and using \ref{prt:conj} of good triples we get
			\[\phi_h(v)=\phi_h((h \circ g)^{k}(w))=f(\phi_h(h \circ g)^{k-1}(w))=\dots=f^k(\phi_h(w)),\]
			but then $f(\phi_h(v))=f^{k+1}(\phi_h(w))$ therefore $f^{k+1}(\phi_h(w))=\phi_h(w)$, contradicting the fact that $f$ has only infinite orbits outside of $N$. Thus $k=0$ and $v=w$, so $\f{\phi}_h$ is indeed injective.

			So we only have to check $\f{g}$ and $\f{\phi}_h$ preserve the relation, that is, for every $w,w' \in \dom(\f{g})$ distinct we have 
			\[wRw' \iff  \f{g}(w)R\f{g}(w'),\]
			and it is enough to check this condition if $\{w,w'\}    \not \subset \dom(g)$ (and similarly for $\phi_h$). 
			So suppose that $w\in \dom(g)$ and $w' \in \dom(\f{g}) \setminus \dom(g)$, that is, $w'=v$. Then by the fact that $g(w) \in \ran(g) =\dom(\tau_g)$, \eqref{e:randdef1} and the definition of $\tau$ we have
			\[\f{g}(w')R\f{g}(w) \iff \f{g}(v)R\f{g}(w) \iff \f{v}Rg(w) \iff \tau(g(w))=1 \iff\]\[  \tau_g(g(w))=1\iff g^{-1}(g(w))Rv \iff wRv \iff wRw',\]
			so indeed, $\f{g}$ preserves the relation.
			
			Now if $w \in \dom(\phi_h)$ and $w' \in \dom(\f{\phi}_h) \setminus \dom(\phi_h)$, that is, $w'=h(\f{v})$ then $h^{-1}(w) \in h^{-1}(\dom(\phi_h)) =\dom(\tau_h)$. Then we have
			\[\f{\phi}_h(w)R\f{\phi}_h(w') \iff \phi_h(w)R\f{\phi}_h(h(\f{v})) \]
			which is by the definition of $\f{\phi}_h$
			\[ \iff \phi_h(w)Rf(\phi_h(v))  \iff \phi_h(h(h^{-1}(w)))Rf(\phi_h(v))\]
			using the definition of $\tau$ and \eqref{e:randdef2} we get
			\[  \iff \tau_h(h^{-1}(w))=1\iff h^{-1}(w)R\f{v} \iff wRh(\f{v}) \iff wRw',\]
			so we are done.
			
			\item By the definition of $\f{\phi}_h$ we have $\dom(\f{\phi}_h) = \dom(\phi_h) \cup \{h(\f{v})\} \supset \rd(h \circ g) \cup \{v,h(\f{v})\}$ and using the fact that $d_{   \mc{K}}(\f{v},\dom(\f{g}))>3$ we obtain that $h(\f{v}) \not \in \dom(g)$, thus $ \rd(h \circ g) \cup \{v,h(\f{v})\}= \rd(h \circ \f{g})$. Moreover, $\rd(\f{g}) \cup  \dom(\f{\phi}_h) \subset \f{M}$. 
			\item Obvious.
			\item It is enough to check equality $(\f{\phi}_{h} \circ h \circ \f{g})(v_0)= (f \circ \f{\phi}_{h})(v_0)$ for $v_0=v$, as for $v_0 \in \dom(g)$ we have $v_0 \in \dom(g) \subset \dom(\phi_{h})$ and $h(g(v_0)) \in \rd(h \circ g) \subset \dom(\phi_{h})$ so the equality holds because we started with a good triple.  But using the definition of $\f{\phi}_h$ we have $\f{\phi}_h(h(\f{g}(v)))=\f{\phi}_h(h(\f{v}))=f(\f{\phi}_{h}(v))$.
			\item  Suppose that for vertices $w, w' \in \dom(\f{\phi}_h) \setminus N$ we have that $\mc{O}^{h \circ \f{g}}(w) \not = \mc{O}^{h \circ \f{g}}(w')$. Then of course $w \not = w'$. We have extended $\phi_h$ only to $h(\f{v})$ so it is enough to check the property with $w=h(\f{v})$ and $w' \in \dom(\phi_h)$. But $\mc{O}^{h \circ \f{g}}(h(\f{v}))=\mc{O}^{h \circ \f{g}}(h(\f{g}(v)))=\mc{O}^{h \circ \f{g}}(v)$, thus, $\mc{O}^{h \circ g}(v) \cap \mc{O}^{h \circ g}(w')\subset \mc{O}^{h \circ \f{g}}(h(\f{v})) \cap \mc{O}^{h \circ \f{g}}(w')= \emptyset$. Therefore, by the fact that $h(\f{v}) \not \in N$ implies $v \not \in N$ and Property \ref{prt:distorb} of good triples we get $\mc{O}^{f}(\phi_h(v)) \cap \mc{O}^{f}(\phi_h(w'))=  \emptyset$. Using this and the definition of $\f{\phi}_h$ we have $\mc{O}^{f}(\f{\phi}_h(h(\f{v})))=\mc{O}^{f}(\phi_h(v))$ and $\mc{O}^{f}(\f{\phi}_h(v)) \cap \mc{O}^{f}(\f{\phi}_h(w'))=\mc{O}^{f}(\phi_h(v)) \cap \mc{O}^{f}(\phi_h(w'))=\emptyset$ so we are done.
			\item If $h,h' \in \mathcal{K}$ and $h|_{\f{M}^*}=h'|_{\f{M}^*}$ then in particular $h(\f{v})=h'(\f{v})$ and $h|_{M^*}=h'|_{M^*}$ so $\phi_{h}=\phi_{h'}$. Then by definition $\f{\phi}_{h}=\f{\phi}_{h'}$.
			
			\item If $h \in \mathcal{K}$ and $w \not \in N$ is a vertex and for some $i \in \omega \setminus \{0\}$ we have $(h \circ \f{g})^i(w)=w$ then at least one of the points $\{w,\dots, (h \circ \f{g})^{i-1}(w)\}$ is not in the domain of $g$, otherwise the triple $(g,(\phi_h)_{h \in \mc{K}},M)$ would already violate this property of good conditions. In other words, $v \in \{w,\dots, (h \circ \f{g})^{i-1}(w)\}$. 
			Moreover, $\{w,\dots, (h \circ \f{g})^{i-1}(w)\} \subset \dom(\f{g})$ and clearly $(h \circ \f{g})(\{w,\dots, (h \circ \f{g})^{i-1}(w)\})=\{w,\dots, (h \circ \f{g})^{i-1}(w)\}$, so $(h\circ \f{g})(v) \in \dom(\f{g})$, that is, $h(\f{v}) \in \dom(\f{g})$. But we know that $\dom(\f{g})=\{v\} \cup \dom(g) \subset \cap_h \dom(\phi_h) \cup \dom(g) \subset M$. Therefore $h(\f{v}) \in M$, contradicting the assumption that $d_{\mc{K}}(\f{v},M)>3$.
			\item Since $(g, (\phi_h)_{h \in \mc{K}},M)$ is a good triple, this condition can fail for $(\f{g}, (\f{\phi}_h)_{h \in \mc{K}},\f{M})$, an $h$ and $w$ only if $w \in \dom(\f{\phi}_{h}) \setminus \dom(\phi_{h})$, in other words $w=h(\f{v})$. 
			So suppose $h^{-1}(w)=h'^{-1}(w)$, that is, $\f{v}=h^{-1}(h(\f{v}))=h'^{-1}(h(\f{v}))$. This means that $h'(\f{v})=h(\f{v})$, but then by \ref{prt:dom} we have $w=h'(\f{v}) \in \dom(\f{\phi}_{h'})$ as well.
			\item Suppose that there exists an $(h,h',\f{\phi}_h,\f{\phi}_{h'},x,y)$ ugly situation. If $h|_{M^*}=h'|_{M^*}$ and $h(\f{v})=h'(\f{v})$ then $\f{\phi}_h=\f{\phi}_{h'}$ which contradicts properties \ref{prt:uglynew} and \ref{prt:uglydist}. 
			
			Now if $h|_{M^*} \not =h'|_{M^*}$ or $h(\f{v}) \not =h'(\f{v})$, then since $\f{v}$ is a splitting point we have $h(\f{v}) \not =h'(\f{v})$. Consequently, $h^{-1}(h(\f{v})) \not =h'^{-1}(h(\f{v}))$ and also by $d_{\mc{K}}(\f{v},M)>3$ clearly $d_{\mc{K}}(h(\f{v}),M) >2$ and $d_{\mc{K}}(h'(\f{v}),M) >2$. Then we can apply Lemma \ref{l:firsttriv} for $(h,h',\f{\phi}_h,\f{\phi}_{h'},x,y)$ and $\{h(\f{v}) \}\supset \dom(\f{\phi}_h) \setminus \dom(\phi_h)$, $\{h'(\f{v})\} \supset \dom(\f{\phi}_{h'}) \setminus \dom(\phi_{h'})$ by which there are no $(h,h',\f{\phi}_h,\f{\phi}_{h'},x,y)$ ugly situations.
			
			\item  Here the argument is similar. Suppose that there exists an $(h,h',\f{\phi}_h,\f{\phi}_{h'},x,x',y)$ bad situation. If $h|_{M^*}=h'|_{M^*}$ and $h(\f{v})=h'(\f{v})$ then $\f{\phi}_h=\f{\phi}_{h'}$ but then \ref{prt:baddist} cannot be true. Now if $h|_{M^*} \not =h'|_{M^*}$ then as in the previous point we can use Lemma \ref{l:firsttriv} for $(h,h',\f{\phi}_h,\f{\phi}_{h'},x,x',y)$. Therefore, either $x=h(\f{v})$, $x'=h'(\f{v})$ or $y=h(\f{v})=h'(\f{v})$. But the second option is impossible since $y=h(\f{v})=h'(\f{v})$ contradicts that $\f{v}$ was a splitting point. Now the first option is also impossible unless $x , x' \in N$: as $\f{v}=\f{g}(v)$, that is, $x=h(\f{g}(v))$, contradicting \ref{prt:badend}. But if $x,x' \in N \subset M$ then $d_{\mc{K}}(\f{v},M) \leq 1$, a contradiction again.

		\end{enumerate}

	\end{proof}
	
	Now we prove a lemma which allows us to extend $g$ backwards. The proof is very similar to the proof of the forward extension, although to treat both cases in the same framework would have a great technical cost. For the sake of completeness we write down the proofs in detail.
	
	\begin{lemma}
		Suppose that $(g, (\phi_h)_{h \in \mc{K}},M)$ is a good triple and $v \in M$ is a vertex so that for every $h \in \mathcal{K}$ we have $h(v) \in \dom(\phi_h)$. Then there exist extensions $\f{g} \supset g$, $\f{\phi}_{h} \supset \phi_{h}$ and $\overline{M} \supset M$ so that $(\f{g}, (\f{\phi}_h)_{h \in \mc{K}},\f{M})$ is a good triple and $v \in \ran(\f{g})$.
		\label{l:backward}
	\end{lemma}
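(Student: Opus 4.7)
The proof mirrors Lemma \ref{l:forward}, dualizing appropriately. Assume without loss of generality that $v \notin \ran(g)$ (otherwise take $\f{g} = g$, $\f{\phi}_h = \phi_h$, $\f{M} = M$, and the conclusion is immediate). Look for a fresh vertex $\f{v}$ and set $\f{g} = g \cup \langle \f{v}, v \rangle$, so that $\f{v} \in \dom(\f{g})$ and $v \in \ran(\f{g})$. Property \ref{prt:dom} of good triples forces $\f{v} \in \dom(\f{\phi}_h)$ for every $h \in \mc{K}$, and property \ref{prt:conj} evaluated at $\f{v}$ (using $h(\f{g}(\f{v})) = h(v) \in \dom(\phi_h)$ by hypothesis) forces $\f{\phi}_h(\f{v}) = f^{-1}(\phi_h(h(v)))$. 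Thus the extension is essentially determined once $\f{v}$ is chosen.

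To select $\f{v}$, we prescribe its type by the maps $\tau_g \colon \dom(g) \to 2$ with $\tau_g(w) = 1 \iff v R g(w)$ (so that $\f{g}$ preserves edges between $\f{v}$ and $\dom(g)$) and $\tau_h \colon \dom(\phi_h) \to 2$ with $\tau_h(w) = 1 \iff \phi_h(h(v)) R f(\phi_h(w))$ (so that $\f{\phi}_h$ preserves edges between $\f{v}$ and $\dom(\phi_h)$). The crucial claim is that $\tau := \tau_g \cup \bigcup_{h \in \mc{K}} \tau_h$ is a well-defined function. Compatibility of $\tau_g$ with $\tau_h$ on $\dom(g)$ is a direct calculation using that $h$ and $\phi_h$ preserve $R$ together with the conjugacy identity $\phi_h(h(g(w))) = f(\phi_h(w))$. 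Compatibility of $\tau_h$ with $\tau_{h'}$ on $\dom(\phi_h) \cap \dom(\phi_{h'})$, for $w \notin \dom(g)$, follows from property \ref{prt:badsitu} applied to $(h, h', \phi_h, \phi_{h'}, g, h(v), h'(v), w)$: the endpoint condition \ref{prt:badend} holds because $v \notin \ran(g)$ forces $b(h(v), h \circ g) = h(v)$ and symmetrically $b(h'(v), h' \circ g) = h'(v)$, while $w \notin \dom(g)$ forces $e(w, h \circ g) = w = e(w, h' \circ g)$; property \ref{prt:badimage} is immediate from $h^{-1}(h(v)) = v = h'^{-1}(h'(v))$; and \ref{prt:baddista} holds by hypothesis. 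Hence \ref{prt:baddistb} must fail, yielding $\tau_h(w) = \tau_{h'}(w)$.

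Next, invoke Corollary \ref{c:splitting} to choose a splitting point $\f{v}$ for $M^*$ and $\mc{K}$ that realizes $\tau$ (extended arbitrarily to $M^* \setminus \dom(\tau)$) with $d_{\mc{K}}(\f{v}, M^*) > 3$. Set $\f{g}$, $\f{\phi}_h$ as above and $\f{M} = M \cup \{\f{v}\}$. The verification that $(\f{g}, (\f{\phi}_h)_{h \in \mc{K}}, \f{M})$ is a good triple proceeds property-by-property essentially as in Lemma \ref{l:forward}: injectivity of $\f{\phi}_h$ uses property \ref{prt:distorb} together with the fact that $f$ has only infinite orbits off $N$; properties \ref{prt:auto}--\ref{prt:conj} follow from the compatibility claim and the definitions; properties \ref{prt:distorb}--\ref{prt:onecase} reduce to the large-distance condition $d_{\mc{K}}(\f{v}, M) > 3$ and the splitting property (which, for instance, is what prevents $\f{v}$ from closing a $h \circ \f{g}$-cycle outside $N$); and properties \ref{prt:uglysitu}, \ref{prt:badsitu} follow by invoking Lemma \ref{l:firsttriv} applied to $\{\f{v}\} = \dom(\f{\phi}_h) \setminus \dom(\phi_h)$ for every $h$, the splitting property giving the required distinct preimages.

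The main obstacle, as in the forward case, is the compatibility of the partial types $\tau_h$ across different $h \in \mc{K}$, which must be reduced to the no-bad-situations hypothesis. Here the novelty is orienting the bad situation correctly: the new element $\f{v}$ plays the role of the "domain" point (rather than the "range" point as in Lemma \ref{l:forward}), so one places $h(v), h'(v)$ in the $x, x'$ slots and $w$ in the $y$ slot, with the endpoint conditions $b(h(v), h \circ g) = h(v)$ and $e(w, h \circ g) = w$ matching this orientation. This is the precise point where the "backward" direction of the extension interacts with the asymmetric roles of beginning and end of $h \circ g$-orbits in Definition \ref{d:badsitu}; every other step of the forward proof goes through after routine dualization.
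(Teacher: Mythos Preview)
Your proposal is correct and follows essentially the same approach as the paper: you choose the same new vertex $\f{v}$ via the same type constraints $\tau_g,\tau_h$ (your $\tau_h(w)=1\iff \phi_h(h(v))Rf(\phi_h(w))$ is equivalent to the paper's $\phi_h(w)Rf^{-1}(\phi_h(h(v)))$), reduce the compatibility of $\tau_h$ with $\tau_{h'}$ to the absence of the bad situation $(h,h',\phi_h,\phi_{h'},g,h(v),h'(v),w)$, and set $\f{M}=M\cup\{\f{v}\}$, $\f{\phi}_h=\phi_h\cup\langle\f{v},f^{-1}(\phi_h(h(v)))\rangle$. The only place where your sketch is thinner than the paper is the verification of \ref{prt:badsitu}: Lemma \ref{l:firsttriv} does not directly rule out bad situations but reduces them to the two cases $x=x'=\f{v}$ or $y=\f{v}$, and each of these still needs a short argument (the first contradicts the splitting property via \ref{prt:badimage}, the second contradicts \ref{prt:badend} since $\f{v}\in\dom(\f{g})$ forces $y\in N$, impossible by the distance bound); you should make this explicit.
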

	
	\begin{proof}
		We will find a suitable vertex $\f{v}$ and let $\f{g} =g \cup \langle \f{v},v\rangle$.
		
		Define a map $\tau_g:\dom(g) \to 2$ as follows:
		\begin{equation}
		\label{e:randdefback1}
		\tau_g(w)=1 \iff g(w) R v,
		\end{equation}
		and maps $\tau_h:\dom(\phi_h) \to 2$ for each $h \in \mathcal{K}$
		\begin{equation}
		\label{e:randdefback2}
		\tau_h(w)=1 \iff \phi_h(w) R f^{-1}(\phi_h(h(v))).
		\end{equation}
		\begin{claim} The maps $\tau_g, (\tau_h)_{h \in \mathcal{K}}$ are compatible, i. e., $\tau=\tau_g \cup \bigcup_{h \in \mathcal{K}} \tau_h$ is a function. 
		\end{claim}
		\begin{proof}[Proof of the Claim.]
			\textit{$\tau_g$ and $\tau_h$ are compatible.} Let $h \in \mc{K}$ be arbitrary and $w \in \dom(\tau_g) \cap \dom(\tau_h)=\dom(g) \cap \dom(\tau_h)$. Clearly, by Property \ref{prt:dom} of good triples we have $w,h(g(w)) \in \dom(\phi_h)$. So we can use Property \ref{prt:conj} for $w$ and we get 
			\[(\phi_{h} \circ h \circ g)(w)= (f \circ \phi_{h})(w).\]
			From the definition of $\tau_h$ we obtain
			\[
			\tau_h(w)=1 \iff \phi_h(w) R f^{-1}(\phi_h(h(v))) \iff f(\phi_h(w)) R \phi_h(h(v)).
			\]
			Putting together these equations and using that $\phi_h$ is an automorphism we obtain
			\[\tau_h(w)=1 \iff (\phi_{h} \circ h \circ g)(w)  R \phi_h(h(v)) \iff g(w) R v \iff \tau_g(w)=1.\]

			\textit{$\tau_h$ and $\tau_h'$ are compatible.} Let $h,h' \in \mathcal{K}$ be arbitrary and $w \in \dom(\tau_{h}) \cap \dom(\tau_{h'})$. By the fact that $\tau_h$ and $\tau_h'$ are compatible with $\tau_g$ we can assume $w \not \in \dom(\tau_g)=\dom(g).$
			
			We will use Property \ref{prt:badsitu}, that there are no bad situations. Let us consider the sequence $(h,h',\phi_h,\phi_{h'},g,h(v),h'(v),w)$. Clearly, by $v \not \in \ran(g)$ we have  $b(h(v),h \circ g)=h(v)$ and similarly $b(h'(v),h' \circ g)=h'(v)$. Moreover, as $w \not \in \dom(g)$, we have $e(w,h \circ g)=e(w,h' \circ g)=w$, so Property \ref{prt:badend} of Definition \ref{d:badsitu} holds. Obviously, $h^{-1}(h(v))=h'^{-1}(h'(v))$, therefore Property \ref{prt:badimage} is also true. By the assumptions of Lemma \ref{l:backward} clearly $h(v),w \in \dom(\phi_h)$ and $h'(v),w \in \dom(\phi_{h'})$, hence, as there are no bad situations Property \ref{prt:baddistb} must fail, consequently
			\[\phi_{h}(h(v)) R f(\phi_{h}(w)) \iff  \phi_{h'}(h'(v)) R f(\phi_{h'}(w)),  \]
			
			so, by definition of $\tau_h$ and $\tau_{h'}$ we get
			\[\tau_h(w)=1 \iff f^{-1}(\phi_h(h(v))) R \phi_h(w) \iff \phi_h(h(v)) R f(\phi_h(w)) \iff\]\[  \phi_{h'}(h'(v)) R f(\phi_{h'}(w)) \iff f^{-1}(\phi_{h'}(h'(v))) R \phi_h(w) \iff \tau_{h'}(w)=1.\]
			
			This finishes the proof of the claim. 
		\end{proof}

		Now we return to the proof of Lemma \ref{l:backward}. By Corollary \ref{c:splitting} there exists a splitting point $\f{v}$ for $M^*$ and $\mc{K}$ that realizes $\tau$ and $d_{\mc{K}}(\f{v},M^*)>3$. Let $\f{g}=g \cup \langle \f{v},v\rangle$, $\f{M}=M \cup \{\f{v}\}$ and for every $h \in \mc{K}$ let $\f{\phi}_h=\phi_h \cup \langle \f{v}, f^{-1}(\phi_h(h(v))) \rangle$.
		
		We claim that $(\f{g}, (\f{\phi}_h)_{h \in \mc{K}},\f{M})$ is a good triple.
		
		\begin{enumerate}[label=(\roman*)]
			\item  We check that $\f{g}$ and $\f{\phi}_h$ are partial automorphisms. Since $d_{\mc{K}}(\f{v},M)>2$, $\dom(g) \subset M$ and $d_{\mc{K}}(\f{v},M)>2$ the function $\f{g}$ is injective. 
			
			We check the injectivity of functions  $\f{\phi}_h$.  
			
			If for some $w$ we have \begin{equation} \label{e:trivback} \f{\phi}_h(\f{v})=f^{-1}(\phi_h(h(v)))=\phi_{h}(w) \end{equation} then using the facts that $\phi_h|_N=id|_N$ and that $N$ is the union of the finite orbits of $f$ we can conclude again that $w \in N$ would imply $\phi_h(h(v)) \in N$, and thus $v, h(v) \in N \subset \ran(g)$ which is impossible. So $w,h(v) \not \in N$. But by \eqref{e:trivback} we have $\mc{O}^f(\phi_h(h(v)))=\mc{O}^f(\phi_{h}(w))$ so using Property \ref{prt:distorb} of good triples we obtain $\mc{O}^{h \circ g}(h(v))=\mc{O}^{h \circ g}(w)$. Then as $v \not \in \ran(g)$ clearly $w=(h \circ g)^{k}(h(v))$ for some $k \geq 0$. Suppose $k>0$. Applying $\phi_h$ to both sides and using Property \ref{prt:conj} of good triples we get
			\[\phi_h(w)=\phi_h((h \circ g)^{k}(h(v)))=\]
			\[=f(\phi_h((h \circ g)^{k-1}(h(v))))=\dots=f^k(\phi_h(h(v))),\]
			but then $f(\phi_h(w))=f^{k+1}(\phi_h(h(v)))$, therefore by \eqref{e:trivback} we get $f^{k+1}(\phi_h(h(v)))=\phi_h(h(v))$, contradicting the fact that $f$ has only infinite orbits outside of $N$. Thus $k=0$ and $\f{v}=w$, so $\f{\phi}_h$ is indeed injective.

			We have to check $\f{g}$ preserves the relation, and again it is enough to check for $w\in \dom(g)$ and $w' \in \dom(\f{g}) \setminus \dom(g)$, that is, $w'=\f{v}$. Then by the fact that $w \in \dom(g) =\dom(\tau_g)$, \eqref{e:randdef1} and the definition of $\tau$ we have
			\[\f{g}(w')R\f{g}(w) \iff \f{g}(\f{v})R\f{g}(w) \iff vRg(w) \iff \tau_g(w)=1 \iff\]\[  wR\f{v} \iff wRw',\]
			so indeed, $\f{g}$ preserves the relation.
			
			Now if $w \in \dom(\phi_h)$ and $w' \in \dom(\f{\phi}_h) \setminus \dom(\phi_h)$, that is, $w'=\f{v}$ then we have
			\[\f{\phi}_h(w)R\f{\phi}_h(w') \iff \phi_h(w)R\f{\phi}_h(\f{v}) \]
			which is by the definition of $\f{\phi}_h$, $\tau$ and \eqref{e:randdefback2}
			\[ \iff \phi_h(w)Rf^{-1}(\phi_h(h(v)))  \iff \tau_h(w)=1\iff wR\f{v},\]
			so we are done.
			
			\item By the definition of $\f{\phi}_h$ we have $\dom(\f{\phi}_h) = \dom(\phi_h) \cup \{\f{v}\}\supset \rd(h \circ g) \cup \{\f{v}\}$ and using the fact that $d_{\mc{K}}(\f{v},\dom(\f{g}))>3$ we obtain that $h^{-1}(\f{v}) \not \in \dom(g)$, thus $ \rd(h \circ g) \cup \{\f{v}\}= \rd(h \circ \f{g})$. Clearly, $\rd(\f{g}) \cup  \dom(\f{\phi}_h) \subset \f{M}$. 
			\item Obvious.
			\item It is enough to check equality $\f{\phi}_{h} \circ h \circ \f{g}(v_0)= f \circ \f{\phi}_{h}(v_0)$ for $v_0=\f{v}$, as for $v_0 \in \dom(g)$ we have $v_0 \in \dom(g) \subset \dom(\phi_{h})$ so the equality holds because we started with a good triple.  But using the definition of $\f{\phi}_h$ and the fact that $h(v) \in \dom(\phi_h)$ we have $\f{\phi}_h(h(\f{g}(\f{v})))=\f{\phi}_h(h(v))=\phi_h(h(v))=f(\f{\phi}_{h}(v))$.
			\item  Suppose that for vertices $w, w' \in \dom(\f{\phi}_h) \setminus N$ we have that $\mc{O}^{h \circ \f{g}}(w) \not = \mc{O}^{h \circ \f{g}}(w')$. Then of course $w \not = w'$. We have extended $\phi_h$ only to $\f{v}$ so it is enough to check the property with $w=\f{v}$ and $w' \in \dom(\phi_h)$. But $\mc{O}^{h \circ \f{g}}(\f{v})=\mc{O}^{h \circ \f{g}}(h(\f{g}(\f{v})))=\mc{O}^{h \circ \f{g}}(h(v))$ thus $\mc{O}^{h \circ g}(h(v)) \cap \mc{O}^{h \circ g}(w')\subset \mc{O}^{h \circ \f{g}}(\f{v}) \cap \mc{O}^{h \circ \f{g}}(w')= \emptyset$. Therefore, by the facts that $\f{v} \not \in N$ implies $h(v) \not \in N$ and we started with a good triple, by Property \ref{prt:distorb} we obtain $\mc{O}^{f}(\phi_h(h(v))) \cap \mc{O}^{f}(\phi_h(w'))=  \emptyset$. Using this and the definition of $\f{\phi}_h$ we have $\mc{O}^{f}(f^{-1}(\f{\phi}_h(h(v))))=\mc{O}^{f}(\f{\phi}_h(\f{v}))$ thus  $\mc{O}^{f}(\f{\phi}_h(\f{v})) \cap \mc{O}^{f}(\f{\phi}_h(w'))=\mc{O}^{f}(\phi_h(h(v))) \cap \mc{O}^{f}(\phi_h(w'))=\emptyset$ so we are done.
			\item If $h,h' \in \mathcal{K}$ and $h|_{\f{M}^*}=h'|_{\f{M}^*}$ then $h|_{M^*}=h'|_{M^*}$ thus $\phi_{h}=\phi_{h'}$, so by definition $\f{\phi}_{h}=\f{\phi}_{h'}$.
			
			\item If $h \in \mathcal{K}$, and for some $i \in \omega \setminus \{0\}$ we have $(h \circ \f{g})^i(w)=w$ then at least one of the points $\{w,\dots, (h \circ \f{g})^{i-1}(w)\}$ is not in the domain of $g$, otherwise the triple $(g,(\phi_h)_{h \in \mc{K}},M)$ would violate this property of good conditions. In other words, $\f{v} \in \{w,\dots, (h \circ \f{g})^{i-1}(w)\}$. 
			Moreover, $\{w,\dots, (h \circ \f{g})^{i-1}(w)\} \subset \dom(\f{g})$ and clearly $(h \circ \f{g})^{-1}(\{w,\dots, (h \circ \f{g})^{i-1}(w)\})=\{w,\dots, (h \circ \f{g})^{i-1}(w)\}$, so $\f{v} \in \ran(h\circ \f{g})$, that is, $h^{-1}(\f{v}) \in \ran(\f{g})$. But we know that $\ran(\f{g})=\{v\} \cup \ran(g) \subset M$. Therefore $h(\f{v}) \in M$, contradicting the assumption that $d_{\mc{K}}(\f{v},M)>3$.
			\item Since $(g, (\phi_h)_{h \in \mc{K}},M)$ is a good triple, this condition can fail for $(\f{g}, (\f{\phi}_h)_{h \in \mc{K}},\f{M})$, an $h$ and $w$ only if $w \in \dom(\f{\phi}_{h}) \setminus \dom(\phi_{h})$, in other words $w=\f{v}$. But $\f{v} \in \dom(\f{\phi}_{h'})$ for every $h' \in \mc{K}$ as well.
			
			\item Suppose that there exists an $(h,h',\f{\phi}_h,\f{\phi}_{h'},x,y)$ ugly situation. If $h|_{M^*}=h'|_{M^*}$ then $\f{\phi}_h=\f{\phi}_{h'}$ which contradicts properties \ref{prt:uglynew} and \ref{prt:uglydist}. 
			
			Now if $h|_{M^*} \not =h'|_{M^*}$ then since $\f{v}$ is a splitting point we have $h^{-1}(\f{v}) \not =h'^{-1}(\f{v})$ and also $d_{\mc{K}}(\f{v},M)>2.$ Then we can apply Lemma \ref{l:firsttriv} for $(h,h',\f{\phi}_h,\f{\phi}_{h'},x,y)$ and $\{\f{v}\} \supset \dom(\f{\phi}_h) \setminus \dom(\phi_h)$, $\{\f{v}\} \supset \dom(\f{\phi}_{h'}) \setminus \dom(\phi_{h'})$ so there are no $(h,h',\f{\phi}_h,\f{\phi}_{h'},x,y)$ ugly situations.
			
			\item  Suppose that there exists an $(h,h',\f{\phi}_h,\f{\phi}_{h'},x,x',y)$ bad situation. If $h|_{M^*}=h'|_{M^*}$ then $\f{\phi}_h=\f{\phi}_{h'}$ but then \ref{prt:baddist} cannot be true. 
			
			Now if $h|_{M^*} \not =h'|_{M^*}$, then as above we can use Lemma \ref{l:firsttriv} for $(h,h',\f{\phi}_h,\f{\phi}_{h'},x,x',y)$. Therefore, either $x=x'=\f{v}$ or $y=\f{v}$. The first option is impossible, as by \ref{prt:badimage} we would obtain $h^{-1}(\f{v})=h^{-1}(x)=h'^{-1}(x')=h'^{-1}(\f{v})$ contradicting the fact that $\f{v}$ was a splitting point. We can exclude the second option, as $\f{v} \in \dom(\f{g})$, so we have $e(y,h \circ g) \not = y$ thus using  \ref{prt:badend} we get $y \in N$, which is impossible again by $d_{\mc{K}}(\f{v},M)>3.$ 
		\end{enumerate}
	\end{proof}
	Now we prove a lemma which is the essence of the proof, namely that we can extend the maps $\phi_h$ forward as well.
	\begin{lemma}
		\label{l:extend}
		Suppose that $(g, (\phi_h)_{h \in \mc{K}},M)$ is a good triple, $h \in \mc{K}$ and $v \in M.$ Then there exists a vertex $z$ so that if for every $h' \in \mc{K}$ with $h'|_{M^*}=h|_{M^*}$ we extend $\phi_{h'}$ by letting $\f{\phi}_{h'}=\phi_{h'} \cup \langle v,z\rangle$ then $(g, (\phi_{h'})_{h' \in \mathcal{K}, h'|_{M^*} \not =h|_{M^*}} \cup (\f{\phi}_{h'})_{h \in \mathcal{K}, h'|_{M^*} =h|_{M^*}},M)$ is a good triple.
	\end{lemma}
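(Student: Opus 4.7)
The plan mirrors the structure of Lemmas \ref{l:forward} and \ref{l:backward}: produce a single vertex $z$ realizing an appropriate type on the relevant image set, and set $\f{\phi}_{h'}(v)=z$ for every $h'$ with $h'|_{M^*}=h|_{M^*}$. By property \ref{prt:forth} of good triples all such $\phi_{h'}$ coincide, so exactly one $z$ has to be chosen. I would define a type $\tau$ on $\dom(\phi_h)$ by $\tau(\phi_h(w))=1\iff vRw$, which is well-defined because $\phi_h$ is an injective partial automorphism; any $z$ realizing $\tau$ makes the extension relation-preserving.

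Next I would impose two extra requirements on $z$: first, $z\notin \mc{O}^f(\phi_h(w))$ for every $w\in\dom(\phi_h)\setminus N$ whose $(h\circ g)$-orbit differs from that of $v$ (a finite union of $f$-orbits, to secure both injectivity and property \ref{prt:distorb}); second, $z\lnot R f(z)$, which will make the hypothesis of property \ref{prt:onecase} vacuous at the new point $v$ and, as I explain below, also shields us from new ugly situations. Both requirements are simultaneously realizable because $f$ has property $(*)_0$: the definition of $(*)_0$ gives infinitely many vertices realizing any finite type $\tau$, outside any prescribed finite union of $f$-orbits, and with $z\lnot R f(z)$. (In the symmetric $(*)_1$ case one asks for $zRf(z)$.) Since we are working inside $\aut(\mc{R})$, no $d_{\mc{K}}$-distance control is needed here because $z$ is only added to the range of $\phi_h$, not to $\dom(\f{g})$.

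With $z$ fixed, I would verify the good-triple conditions one by one. Properties \ref{prt:auto}--\ref{prt:nnoying}, \ref{prt:forth} and \ref{prt:nocirc} are essentially immediate from the construction; property \ref{prt:conj} holds because the only new value of $\f{\phi}_{h'}$ is at $v$, and $v\in M$ together with the orbit restriction on $z$ prevents any new instance of the defining equation from becoming active; property \ref{prt:distorb} follows from the orbit-avoidance requirement, and property \ref{prt:onecase} follows from $z\lnot Rf(z)$.

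The main obstacle is clauses \ref{prt:uglysitu} and \ref{prt:badsitu}. A newly created ugly $(h',h'',\f{\phi}_{h'},\f{\phi}_{h''},g,x,y)$ situation must use the added point, so either $x=v$ or $y=v$, and must involve some $h''$ with $h''|_{M^*}\neq h|_{M^*}$ (otherwise both maps are extended identically, and \ref{prt:uglynew} fails). In that case \ref{prt:uglydist} demands $\f{\phi}_{h'}(x)Rf(\f{\phi}_{h'}(y))$, and for $y=v$ this forces the new image $f(z)$ to be $R$-related to an old image; the choice $z\lnot Rf(z)$ together with the orbit disjointness excludes this, and the case $x=v$ reduces (via the definition of ugly) to an instance already present in the original good triple. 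New bad situations are handled analogously: any candidate $(h',h'',\f{\phi}_{h'},\f{\phi}_{h''},g,x,x',y)$ bad situation involving $v$ would, after the same case split, either collapse to a configuration equivalent to one in $(g,(\phi_h),M)$, or contradict the orbit restrictions on $z$ via \ref{prt:baddistb}. I expect this combinatorial case analysis for \ref{prt:uglysitu} and \ref{prt:badsitu} to be the only genuinely delicate part of the argument; the existence of $z$ is a direct application of $(*)_0$.
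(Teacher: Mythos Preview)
Your requirements on $z$ are too weak to rule out new bad and ugly situations, and this is precisely the heart of the lemma. Conditions \ref{prt:uglydist} and \ref{prt:baddistb} are statements about the \emph{graph relation} $R$ between images under $\f\phi_h$ and their $f$-shifts, not about orbit membership. For instance, a new ugly $(h,h',\f\phi_h,\phi_{h'},g,x,y)$ situation with $x=v$ and $y\in\dom(\phi_h)$ requires $z=\f\phi_h(v)\,R\,f(\phi_h(y))$; your constraints $z\notin\mc O^f(\ran\phi_h)$ and $z\lnot R f(z)$ say nothing about whether $z$ is $R$-related to the specific vertex $f(\phi_h(y))$, so they cannot exclude this. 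Likewise, a new bad situation with $y=v$ needs a prescribed equivalence $f(z)\,R\,\phi_h(x)\iff \phi_{h'}(x')\,R\,f(\phi_{h'}(v))$, and no amount of orbit avoidance forces this biconditional. Your claim that the case $x=v$ ``reduces to an instance already present'' is false for the same reason: once $v$ enters $\dom(\f\phi_h)$, the relevant inequality in \ref{prt:uglydist} or \ref{prt:baddistb} involves $z$ and is genuinely new.

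What the paper does is add, on top of your type $\tau$, further explicit edge requirements on $z$: for every potential bad configuration $(h,h',x,x',v)$ one demands $z\,R\,f^{-1}(\phi_h(x))\iff \phi_{h'}(x')\,R\,f(\phi_{h'}(v))$, and symmetrically for configurations $(h,h',v,x',y)$; for potential ugly configurations one simply demands the corresponding non-edge. The substantial work, occupying most of the proof, is to check that all these extra clauses are \emph{compatible} with each other and with $\tau$. This compatibility is exactly where the inductive hypotheses \ref{prt:badsitu} and \ref{prt:uglysitu} (and also \ref{prt:distorb}, \ref{prt:conj}) on the original triple are used: a clash between two of the new edge constraints would manufacture a bad or ugly situation already present in $(g,(\phi_h),M)$. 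Only after compatibility is established does $(*)_0$ guarantee the existence of $z$. Your proposal skips this entire step, so as written it does not prove the lemma.
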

	
	\begin{proof}
		
		First find a vertex $z$ satisfying the following requirements (note that the below requirements depend solely on $h|_{M^*}$, hence these will be exactly the same for every $h' \in \mc{K}$ so that $h'|_{M^*}=h|_{M^*}$):
		
		\begin{enumerate}[label=(\arabic*)]
			\item \label{prt:extfirst} $z \lnot R f(z)$ and $z \not \in \mc{O}^f(\ran(\phi_h))$,
			\item \label{prt:extauto} for every $w \in \dom(\phi_h)$ we have $z R \phi_h( w) \iff v R w$,
			\item \label{prt:extbadfirst} if for some $h' \in \mc{K}$ and $x,x' \in V$ the sequence $(h,h',x,x',v)$ has Properties \ref{prt:badend} and \ref{prt:badimage} of a bad situation then 
			\[\tag{z.3.B}
			\label{e:3zB} \text{ if } x \in \dom(\phi_h) \text{ and } x',v \in \dom(\phi_{h'})\text{  holds then }\]
			\[z R f^{-1}(\phi_h(x)) \iff   f(z) R \phi_h(x) \iff \phi_{h'}(x') R f(\phi_{h'}(v))\]
			i. e., \ref{prt:baddistb} is false with $z=\f{\phi}_h(v)$,
			\[ \text{ if } x \in \dom(\phi_h), v=x' \text{ and } v \not \in \dom(\phi_{h'}) \text{ holds then } z \lnot R f^{-1}(\phi_{h}(x)), 
			\tag{z.3.U}
			\label{e:3zU}\]
			i. e., \ref{prt:uglydist} is false with $z=\f{\phi}_h(v)$,
			\item \label{prt:extbadsecond} if for some $h' \in \mc{K}$ and $y,x' \in V$ the sequence $(h,h',v,x',y)$ has Properties \ref{prt:badend} and \ref{prt:badimage} of a bad situation then 
			\[ \tag{z.4.B} \label{e:4zB}\text{ if } y \in \dom(\phi_h) \text{ and } x',y \in \dom(\phi_{h'})\text{  holds then }\]
			\[z R f(\phi_{h}(y)) \iff \phi_{h'}(x') R f(\phi_{h'}(y)),\]
			i. e., again, \ref{prt:baddist} is false with $z=\f{\phi}_h(v)$, 
			\[ \tag{z.4.U}
			\label{e:4zU} \text{ if } y \in \dom(\phi_h),y=x'   \text{ and } y \not \in \dom(\phi_{h'}) \text{ holds then } z \lnot R f(\phi_{h}(y)), \]
			i. e., \ref{prt:uglydist} is false with $z=\f{\phi}_h(v)$.
			
		\end{enumerate}
		\begin{claim}
			There exists such a $z$.
		\end{claim}
		\begin{proof}[Proof of the Claim.]
			Since $f$ has property $(*)_0$ it is enough to show that requirements on $z$ \ref{prt:extauto}-\ref{prt:extbadsecond} are not contradicting. Obviously, by the injectivity of $\phi_h$ there is no contradiction between requirements of type \ref{prt:extauto}, and by the fact that only non-relations are required between requirements of type \ref{e:3zU} and of type \ref{e:4zU}. Thus, it is enough to check that requirements of type \ref{e:3zB}, \ref{e:3zB}- \ref{e:3zU}, \ref{e:4zB}, \ref{e:4zB}- \ref{e:4zU}, and of type \ref{prt:extauto}-\ref{prt:extbadfirst}, \ref{prt:extauto}-\ref{prt:extbadsecond} and \ref{prt:extbadfirst}-\ref{prt:extbadsecond} are not in a contradiction.
			
			\textit{The requirements in \ref{prt:extbadfirst} are compatible, \eqref{e:3zB}.} Suppose otherwise, namely, there is a contradiction between requirements of type \eqref{e:3zB}. Then we have automorphisms $h'_1,h'_2 \in \mc{K}$, vertices $x_1,x_2,x'_1,x'_2$ showing the contradiction, that is, $(h,h'_1,x_1,x'_1,v)$ has properties \ref{prt:badend}, \ref{prt:badimage} of a bad situation and $x_1 \in \dom(\phi_h) \text{ and } x'_1,v \in \dom(\phi_{h'_1})$ and similarly for $(h,h'_2,x_2,x'_2,v)$ but 
			\begin{equation}
			\label{e:equiv1}
			\phi_{h'_1}(x'_1) R f(\phi_{h'_1}(v)) \text{ and } \phi_{h'_2}(x'_2) \lnot R f(\phi_{h'_2}(v))
			\end{equation}
			and $f^{-1}(\phi_h(x_1))=f^{-1}(\phi_h(x_2))$, or equivalently, $x_1=x_2$. We claim that there exists an $(h'_1,h'_2,x'_1,x'_2,v)$ bad situation which contradicts the fact that  $(g, (\phi_h)_{h \in \mc{K}},M)$ was a good triple:
			\begin{enumerate}
				\item[\ref{prt:badend}] $(h,h'_1,x_1,x'_1,v)$ and $(h,h'_2,x_2,x'_2,v)$ have property \ref{prt:badend}, in particular $v=e(v,h'_1 \circ g)=e(v,h'_2 \circ g)$ or $v \in N$ and $x'_1=b(x'_1,h'_1 \circ g)$ or $x'_1 \in N$ and $x'_2=b(x'_2,h'_2 \circ g)$ or $x'_2 \in N$,
				\item[\ref{prt:badimage}] using \ref{prt:badimage} for $(h,h'_1,x_1,x'_1,v)$ and $(h,h'_2,x_2,x'_2,v)$ we get $h^{-1}(x_1)=h'^{-1}_1(x'_1)$ and $h^{-1}(x_2)=h'^{-1}_2(x'_2)$, and using $x_1=x_2$ we obtain $h'^{-1}_1(x'_1)=h'^{-1}_2(x'_2)$,
				\item[\ref{prt:baddist}] \eqref{e:equiv1} shows that this holds.
			\end{enumerate}
			
			\textit{The requirements in \ref{prt:extbadfirst} are compatible, \eqref{e:3zB} and \eqref{e:3zU}.} Suppose that there is a contradiction between requirements of type \eqref{e:3zB} and \eqref{e:3zU}. Then we have automorphisms $h'_1,h'_2 \in \mc{K}$, vertices $x_1,x'_1,x_2$ so that  $(h,h'_1,x_1,x'_1,v)$ and $(h,h'_2,x_2,v,v)$ have properties \ref{prt:badend} and \ref{prt:badimage}, $x_1,x_2 \in \dom(\phi_h), x'_1,v \in \dom(\phi_{h'_1})$, $v \not \in \dom(\phi_{h'_2})$ and
			\begin{equation}
			\label{e:equiv3}
			\phi_{h'_1}(x'_1) R f(\phi_{h'_1}(v))
			\end{equation}
			and $f^{-1}(\phi_h(x_1))=f^{-1}(\phi_h(x_2))$, that is, $x_1=x_2$. We claim that we have an $(h'_1,h'_2,x'_1,v)$ ugly situation: 
			
			\begin{enumerate}
				\item[\ref{prt:badend}] follows from the fact that $(h,h'_1,x_1,x'_1,v)$ and $(h,h'_2,x_2,v,v)$ have Property \ref{prt:badend}
				\item[\ref{prt:badimage}] similarly, $(h,h'_1,x_1,x'_1,v)$ and $(h,h'_2,x_2,v,v)$ have Property \ref{prt:badimage} of bad situations and by $x_1=x_2$ we get $h^{-1}(x_1)=h'^{-1}_1(x'_1)$ and $h^{-1}(x_2)=h'^{-1}_2(v)=h^{-1}(x_1)$, thus, \ref{prt:badimage} of bad situations holds for $(h'_1,h'_2,x'_1,v,v)$,
				\item[\ref{prt:uglynew}] since $(h,h'_2,x_2,v,v)$ has property \ref{prt:uglynew}, so $v \not \in \dom(\phi_{h'_2})$,
				\item[\ref{prt:uglydist}] finally, \eqref{e:equiv3} shows that this holds as well. 
			\end{enumerate}
			
			\textit{The requirements in \ref{prt:extbadsecond} are compatible, \eqref{e:4zB}.} Suppose that there is a contradiction between requirements of type \eqref{e:4zB}. Then we have automorphisms $h'_1,h'_2 \in \mc{K}$, vertices $y_1,x'_1,y_2,x'_2$ so that  $(h,h'_1,v,x'_1,y_1)$ and $(h,h'_2,v,x'_2,y_2)$ have properties \ref{prt:badend}, \ref{prt:badimage} and $y_1,y_2 \in \dom(\phi_h), x'_1,y'_1 \in \dom(\phi_{h'_1}) \text{ and } x'_2,y'_2 \in \dom(\phi_{h'_2})$ but
			\begin{equation}
			\label{e:equiv2}
			\phi_{h'_1}(x'_1) R f(\phi_{h'_1}(y_1)) \text{ and } \phi_{h'_2}(x'_2)  \lnot R f(\phi_{h'_2}(y_2))
			\end{equation}
			and $f(\phi_h(y_1))=f(\phi_h(y_2))$, that is, $y_1=y_2$. Then we have an $(h'_1,h'_2,x'_1,x'_2,y_1)$ bad situation:

			\begin{enumerate}
				\item[\ref{prt:badend}] follows from the fact that $(h,h'_1,v,x'_1,y_1)$ and $(h,h'_2,v,x'_2,y_2)$ have property \ref{prt:badend} and $y_1=y_2$,
				\item[\ref{prt:badimage}] $(h,h'_1,v,x'_1,y_1)$ and $(h,h'_2,v,x'_2,y_2)$ have property \ref{prt:badimage} so $h^{-1}(v)=h'^{-1}_1(x'_1)=h'^{-1}_2(x'_2)$  so this is also true,
				\item[\ref{prt:baddist}] \eqref{e:equiv2} shows that this property holds.
			\end{enumerate}

			\textit{The requirements in \ref{prt:extbadsecond} are compatible, \eqref{e:4zB} and \eqref{e:4zU}.} Suppose that there is a contradiction between requirements of type \eqref{e:4zB} and \eqref{e:4zU}. Then we have automorphisms $h'_1,h'_2 \in \mc{K}$, vertices $y_1,x'_1,y_2$ so that  $(h,h'_1,v,x'_1,y_1)$ and $(h,h'_2,v,y_2,y_2)$ have properties \ref{prt:badend}, \ref{prt:badimage} and $y_1,y_2 \in \dom(\phi_h)$, $x'_1,y_1 \in \dom(\phi_{h'_1})$, $y_2 \not \in \dom(\phi_{h'_2})$ but
			\begin{equation}
			\label{e:equiv4}
			\phi_{h'_1}(x'_1) R f(\phi_{h'_1}(y_1))
			\end{equation}
			and $f(\phi_h(y_1))=f(\phi_h(y_2))$, that is, $y_1=y_2$. We claim that we have an $(h'_1,h'_2,x'_1,y_1)$ ugly situation: 
			
			\begin{itemize}
				\item[\ref{prt:badend}] follows from the fact that $(h,h'_1,v,x'_1,y_1)$ and $(h,h'_2,v,y_2,y_2)$ have Property \ref{prt:badend} of bad situations and $y_1=y_2$,
				\item[\ref{prt:badimage}] similarly, $(h,h'_1,v,x'_1,y_1)$ and $(h,h'_2,v,y_2,y_2)$ have Property \ref{prt:badimage} of bad situations we get $h^{-1}(v)=h'^{-1}_1(x_1)$ and $h^{-1}(v)=h'^{-1}_2(y_2)=h'^{-1}_2(y_1)$,
				\item[\ref{prt:uglynew}] since $(h,h'_2,v,y_2,y_2)$ has property \ref{prt:uglynew}, so $y_2 \not \in \dom(\phi_{h'_2})$ and $y_1=y_2$,
				\item[\ref{prt:uglydist}] finally, \eqref{e:equiv4} shows that this condition is true as well.
			\end{itemize}
			
			\textit{The requirements in \ref{prt:extauto} and \ref{prt:extbadfirst} are compatible.} Otherwise there would be vertices $x,x'$ satisfying Property \ref{prt:badend} from Definition \ref{d:badsitu} and $w \in \dom(\phi_h)$  so that $f^{-1}(\phi_h(x))=\phi_h(w)$. 
			
			Suppose first $x \not \in N$. Then clearly $\mc{O}^f(\phi_h(x))=\mc{O}^f(\phi_h(w))$ and $\phi_h(x),\phi_h(w) \not \in N$. Using that $(g, (\phi_h)_{h \in \mc{K}},M)$ is a good triple by Property \ref{prt:distorb} we obtain $\mc{O}^{h \circ g}(x)=\mc{O}^{h \circ g}(w)$ and by $\phi_h(x) \not \in N$ the orbit $\mc{O}^f(\phi_h(x))$ is infinite. Since by Property \ref{prt:badend} of a bad situation $x=b(x,h \circ g)$ we get that $(h \circ g)^k(x)=w$ for some $k \geq 0$. Thus, by Properties \ref{prt:dom} and \ref{prt:conj} of good triples we get
			\[\phi_h(w)=\phi_h((h \circ g)^{k}(x))=f(\phi_h((h \circ g)^{k-1}(x)))=\dots=f^k(\phi_h(x)).\]
			But, this together with $f^{-1}(\phi_h(x))=\phi_h(w)$ contradicts the fact that $\mc{O}^f(\phi_h(x))$ is infinite.
			
			Now if $x \in N$ then clearly $\phi_h(x)=x$ so $f^{-1}(x)=\phi_h(w)$ is also is an element of $N$ by the fact that $N$ is the union of orbits of $f$, and therefore $\phi_h(w)=w$ by Property \ref{prt:nnoying} of good triples. Moreover, by \ref{prt:badimage} we have $x'=h'(h^{-1}(x))$, so $h|_N=h'|_N=f|_N$ implies $x' \in N$ and $x=x'$. Thus, since the requirements are contradicting by our assumption we get \[x \lnot R f(\phi_{h'}(v)) \iff vRw \iff \phi_{h'}(v)Rw\]
			but $w=f^{-1}(x)$ which is impossible.

			\textit{The requirements in \ref{prt:extauto} and \ref{prt:extbadsecond} are compatible.} The argument here is similar. Otherwise there would be a vertex $y$ satisfying Property \ref{prt:badend} from Definition \ref{d:badsitu} and $w \in \dom(\phi_h)$  so that $f(\phi_h(y))=\phi_h(w)$. 
			
			Suppose $y \not \in N$. Then clearly $\mc{O}^f(\phi_h(y))=\mc{O}^f(\phi_h(w))$, so $\phi_h(y),\phi_h(w) \not \in N$ and $\mc{O}^f(\phi_h(y)) \cap N = \emptyset$, thus $\mc{O}^f(\phi_h(y))$ is infinite. Again, by Property \ref{prt:distorb} we obtain $\mc{O}^{h \circ g}(y)=\mc{O}^{h \circ g}(w)$. Since by Property \ref{prt:badend} of a bad situation $y=e(y,h \circ g)$ we get that $(h \circ g)^k(w)=y$ for some $k \geq 0$. But this, using Property \ref{prt:conj} contradicts $f(\phi_h(y))=\phi_h(w)$ and the fact that the orbit $\mc{O}^f(\phi_h(y))$ is infinite.
			
			Now if $y \in N$ then clearly $\phi_h(y)=\phi_{h'}(y)=y$ so $f(y)=\phi_h(w)$ is also an element of $N$ thus $\phi_h(w)=w$. Our requirements are contradicting, so			
			\[\phi_{h'}(x') Rf(\phi_{h'}(y)) \iff v\lnot Rw.\]			
			But $y,f(y), w \in N$ so \[\phi_{h'}(x') Rf(\phi_{h'}(y)) \iff f(y) R \phi_{h'}(x') \iff \phi_{h'}(f(y))R \phi_{h'}(x') \iff f(y) R x'\] 
			using that $x'=h'(h^{-1}(v))$ and $f|_N=h|_N=h'|_N$ 
			\[\iff f(y) R h'(h^{-1}(v)) \iff h(h'^{-1}(f(y))) R v \iff f(y) R v.\]
			But $w=f(y)$, so this gives \[f(\phi_{h'}(y)) R \phi_{h'}(x') \iff wRv,\]
			showing that this is impossible.

			\textit{The requirements in \ref{prt:extbadfirst} and \ref{prt:extbadsecond} are compatible.} Suppose not, then we have sequences $(h,h'_1,x_1,x'_1,v)$ and $(h,h'_2,v,x'_2,y_2)$ having properties \ref{prt:badend} and \ref{prt:badimage}, $x_1,y_2 \in \dom(\phi_h)$ with 
			$f^{-1}(\phi_h(x_1))=f(\phi_{h}(y_2))$. Then $\mc{O}^f(\phi_h(x_1))=\mc{O}^f(\phi_h(y_2))$. 
			
			Let $x_1 \not \in N$, then $\phi_h(x_1),\phi_h(y_2) \not \in N$. Again, by Property \ref{prt:distorb} we obtain $\mc{O}^{h \circ g}(x_1)=\mc{O}^{h \circ g}(y_2)$. But by $y_2=e(y_2,h \circ g)$ we get that $(h \circ g)^k(x_1)=y_2$ for some $k \geq 0$. But this contradicts $f^2(\phi_h(y_2))=\phi_h(x_1)$.
			
			Now, if $x_1 \in N$ then $y_2 \in N$ holds as well. 
			Then, as we have seen before $\phi_h(y_2)=\phi_{h'_2}(y_2)=y_2$ and also $x'_1=x_1$.
			Again, by $h|_N=h'_2|_N=f|_N$ we get
			\[f(y_2)  R \phi_{h'}(x'_2) \iff f(y_2) R x'_2 \iff\]\[ f(y_2) R h'_2(h^{-1}(v)) \iff h(h'^{-1}_2(f(y_2)))  R v \iff f(y_2)R v.\]
			Moreover, using again $x'_1=x_1 \in N$, $f^{-1}(x_1) \in N$, $h|_N=h'_1|_N=f|_N$ and $\phi_{h'_1}|_N=id_N$ we obtain
			\[\phi_{h'_1}(x'_1) R f(\phi_{h'_1}(v)) \iff f^{-1}(x_1) R \phi_{h'_1}(v) \iff f^{-1}(x_1)Rv,\]
			so recalling that $f^2(y_2)=x_1$ we can conclude that the requirements are not in a contradiction.

		\end{proof}
		
		We return to the proof of Lemma \ref{l:extend}. Extend $\phi_h$ to $v$ defining  $\f{\phi}_h=\phi_h \cup \langle v,z\rangle$ for some $z$ having properties \ref{prt:extfirst}-\ref{prt:extbadsecond}. We check that $(g, (\phi_{h'})_{h' \in \mathcal{K}, h'|_{M^*} \not =h|_{M^*}} \cup (\f{\phi}_{h'})_{h \in \mathcal{K}, h'|_{M^*} =h|_{M^*}},M)$ is still a good triple, going through the definition of good triples. 
		
		\begin{enumerate}[label=(\roman*)]
			\item We have to check that $\f{\phi}_h$ is a partial automorphism, but this is exactly property \ref{prt:extauto} of $z$.
			
			\item Obvious, as $\f{\phi}_h$ is the extension of $\phi_h$ to a point $v$ already in $M$.
			\item Obvious.
			\item If  $(\f{\phi}_{h} \circ h \circ g)(v_0)= (f \circ \f{\phi}_{h})(v_0)$, became false after the extension for some $v_0$, then either $(h \circ g)(v_0)=v$ or $v_0=v$. We can exclude both of the possibilities, as by Property \ref{prt:dom} of the good triples $\dom(\phi_h) \supset rd(h \circ g)$, so $\phi_h$ would have been already defined on $v$.  
			
			\item If for $w,w' \in \dom(\f{\phi}_h) \setminus N$ we have that $\mc{O}^{h \circ g}(w) \not = \mc{O}^{h \circ g}(w')$ then clearly $w \not = w'$. Also, either $w,w' \in \dom(\phi_h)$, in which case we are done, or say, $w=v$. But by property \ref{prt:extfirst} of $z$ we have $\mc{O}^{f}(z) \cap  \mc{O}^{f}(\phi_h(w'))= \emptyset$ and clearly $\mc{O}^{f}(\f{\phi}_h(v))= \mc{O}^{f}(z)$  .
			\item Obvious, since we defined the extension of $\phi_h$ the same for a set of $h \in \mathcal{K}$ with the same restriction to $M^*$.
			
			\item This property does not use the functions $\phi_h$.
			
			\item By property \ref{prt:extfirst} of $z$ we have $z \lnot R f(z)$, so whenever $f(\f{\phi}_h(w)) R \f{\phi}_h(w)$ then clearly $w \not =v$, so $w \in \dom(\phi_h)$ and $(g, (\phi_h)_{h \in \mc{K}},M)$ was a good triple, thus,  $(g, (\f{\phi}_h)_{h \in \mc{K}},M)$ also has Property \ref{prt:onecase}.
			\item If there exists an $(h_1,h'_1,\f{\phi}_{h_1},\f{\phi}_{h'_1},g,x,y)$ ugly situation then one of $h_1, h'_1$ must coincide with $h$ on $M^*$, so as the definition of ugly situation depends only on $h|_{M^*}$, we can suppose that one of the functions is $h$. 
			
			Note that $h_1|_{M^*}=h'_1|_{M^*}$ would imply $\f{\phi}_{h}=\f{\phi}_{h'}$ contradicting \ref{prt:uglynew} and \ref{prt:uglydist}. Hence we can suppose that $\{h_1,h'_1\}=\{h,h'\}$ for some $h'|_{M^*} \not = h|_{M^*}$. Moreover, if $h'_1=h$ or $x,y \in \dom(\phi_h)$ by $\f{\phi_{h'}}=\phi_{h'}$ and \ref{prt:uglynew} we would already have an $(h',h,\phi_{h'},\phi_h,g,x,y)$ or $(h,h',\phi_{h},\phi_{h'},g,x,y)$ ugly situation, which is impossible as we have started with a good triple. 
			
			Thus, $h_1=h$ and $x=v$ or $y=v$ and by the definition of the ugly situation $(h,h',\f{\phi}_h,\phi_{h'},x,y,y)$ has Properties 
			\ref{prt:badend}, \ref{prt:badimage} and \ref{prt:uglynew} and  
			\[\f{\phi}_{h}(x) R f(\f{\phi}_{h}(y))\tag{*U}.\]
			
			Now suppose that $x \in \dom(\f{\phi}_h) \setminus \dom(\phi_h)$, that is, $x=v$ and $y \in \dom(\phi_h)$. Then since $(h,h',\phi_h,\phi_{h'},g,x,y,y)$ has Properties \ref{prt:badend}, \ref{prt:badimage} and \ref{prt:uglynew} the requirement \eqref{e:4zU} on $z$ ensures that 
			\[z \lnot R f(\phi_h(y)) \text{ or, equivalently } \f{\phi}_h(x) \lnot R f(\phi_h(y)),\]
			a contradicting (*U).
			
			Suppose $y \in \dom(\f{\phi}_h) \setminus \dom(\phi_h)$, $y=v$ and $x \in \dom(\phi_h)$. Then again, $(h,h',\phi_h,\phi_{h'},g,x,y,y)$ has \ref{prt:badend}, \ref{prt:badimage} and \ref{prt:uglynew}, now we use the requirement \eqref{e:3zU} on $z$: 
			
			\[z \lnot R f^{-1}(\phi_h(x)) \text{ or, equivalently by $v=y$} \]\[\f{\phi}_h(v) \lnot R f^{-1}(\phi_h(x)) \iff f(\f{\phi}_h(y)) \lnot R \f{\phi}_h(x),\]
			a contradiction.
			
			Finally, if $x,y \in \dom(\f{\phi}_h) \setminus \dom(\phi_h)$, that is $x=y=v$. Then we obtain that $\f{\phi}_{h}(x) R f(\f{\phi}_{h}(y))$ means $z R f(z)$, but this contradicts Property \ref{prt:extfirst} of $z$.

			\item 
			
			Suppose that there exists an $(h_1,h'_1,\f{\phi}_{h_1},\f{\phi}_{h'_1},g,x,x',y)$ bad situation. Again, we can suppose that at least one of $h_1$ and $h'_1$ equals to $h$ on $M^*$ and by symmetry there exists an $(h,h',\f{\phi}_{h},\f{\phi}_{h'},x,x',y)$ bad situation. 
			
			Note that using $x,x' \in M$ and $h^{-1}(x),h'^{-1}(x') \in \mc{K}^{-1}(M) \subset M^*$ we obtain that $h_1|_{M^*}=h'_1|_{M^*}$ would imply $x=x'$ and $\f{\phi}_{h_1}=\f{\phi}_{h'_1}$ which contradict \ref{prt:baddistb}. Thus, $h'|_{M^*} \not = h|_{M^*}$ so $\f{\phi}_{h'}=\phi_{h'}$.

			Clearly, at least one of vertices $x,x',y$ must be equal to $v$, hence otherwise there would be an $(h,h',\phi_{h},\phi_{h'},g)$ bad situation.
			
			Suppose that $y=v$ and $x \not =v$. Then by requirement \eqref{e:3zB} on $z$ and $z=\f{\phi}_h(v)$ we obtain 
			\[f(\f{\phi}_h(v)) R \phi_h(x) \iff \phi_{h'}(x') R f(\phi_{h'}(v)),\]
			showing that this is impossible.
			
			Suppose now $x=v$ and $y \not = v$. Then by requirement \eqref{e:4zB} on $z$ we get 
			\[z R f(\phi_{h}(y)) \iff \phi_{h'}(x') R f(\phi_{h'}(y)),\]
			or reformulating the statement
			\[\f{\phi}_f(v) R f(\phi_{h}(y)) \iff \phi_{h'}(x') R f(\phi_{h'}(y)),\]
			again, showing that $(h,h',\f{\phi}_{h},\phi_{h'},g,x,x',y)$ is not a bad situation.
			
			Finally, if $x=y=v$, property \ref{prt:baddist} would give that 
			\[z R f(z) \iff \phi_{h'}(x') R f(\phi_{h'}(v)),\]
			is not true. By Property \ref{prt:extfirst} of $z$ we get $z \lnot R f(z)$ so \begin{equation}
			\phi_{h'}(x') R f(\phi_{h'}(v)). \label{e:equivlast}                                                                         
			\end{equation}
			
			Now we claim that there is an $(h',h,\phi_{h'},\phi_h,g,x',v)$ ugly situation:
			\begin{itemize}
				\item[\ref{prt:badend}] follows from the facts that $(h,h',\f{\phi}_{h},\phi_{h'},g,x,x',y)$ is a bad situation, $x=v$ and $y=v$,
				\item[\ref{prt:badimage}] again, as $(h,h',\f{\phi}_{h},\phi_{h'},g,x,x',y)$ is a bad situation, we have $h^{-1}(x)=h'^{-1}(x')$, so by $x=v$ we have $h'^{-1}(x')=h^{-1}(v)$ which shows this property,
				\item[\ref{prt:uglynew}] clear since $v \not \in \dom(\phi_h)$,
				\item[\ref{prt:uglydist}] \eqref{e:equivlast} is exactly what is required.
				
			\end{itemize}
			This contradicts the fact that $(g, (\phi_h)_{h \in \mc{K}},M)$ was a good triple.
		\end{enumerate}
	\end{proof}
	\begin{corollary}
		\label{c:extend}
		Suppose that $v$ is a vertex, $(g, (\phi_h)_{h \in \mc{K}},M)$ is a good triple and $v \in M$. Then there exist extensions $\f{\phi}_{h} \supset \phi_{h}$ so that $(g, (\f{\phi}_h)_{h \in \mc{K}},\f{M})$ is a good triple and $v \in \bigcap_{h \in \mc{K}} \dom(\f{\phi}_h)$.
	\end{corollary}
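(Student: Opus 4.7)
The strategy is simply to iterate Lemma~\ref{l:extend} once per equivalence class of the relation $h\sim h' \iff h|_{M^*}=h'|_{M^*}$ on $\mc{K}$. The first step is to observe that this relation has only finitely many classes: since $\mc{K}$ is compact and $M^*$ is a finite subset of $V$, Fact~\ref{f:compactchar} gives that $\mc{K}|_{M^*}$ is finite, and the number of $\sim$-classes equals $|\mc{K}|_{M^*}|$. Pick one representative $h_1,\dots,h_n \in \mc{K}$ from each class, and note that the partition of $\mc{K}$ into $\sim$-classes depends only on $\mc{K}$ and $M$, not on the family of partial automorphisms being carried.

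Starting from the good triple $(g,(\phi_h)_{h \in \mc{K}},M)$, apply Lemma~\ref{l:extend} with the given $v \in M$ and the representative $h_1$. This yields a vertex $z_1$ such that the family $(\phi^1_h)_{h \in \mc{K}}$ defined by $\phi^1_h=\phi_h \cup \langle v,z_1\rangle$ for $h|_{M^*}=h_1|_{M^*}$ and $\phi^1_h=\phi_h$ otherwise makes $(g,(\phi^1_h)_{h \in \mc{K}},M)$ a good triple. In particular $v \in \dom(\phi^1_h)$ for every $h$ in the class of $h_1$, and we still have $v \in M$ so that the hypothesis of Lemma~\ref{l:extend} persists.

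Now iterate: for $k=2,\dots,n$, apply Lemma~\ref{l:extend} to the good triple $(g,(\phi^{k-1}_h)_{h \in \mc{K}},M)$ with the same $v$ and with representative $h_k$, producing $(g,(\phi^{k}_h)_{h \in \mc{K}},M)$. Because each application of the lemma modifies only the $\phi_h$'s in a single $\sim$-class (leaving all others literally unchanged) and the classes are pairwise disjoint, the extensions accumulated in previous steps survive intact. After $n$ iterations we obtain a good triple $(g,(\phi^n_h)_{h \in \mc{K}},M)$ with $v \in \dom(\phi^n_h)$ for every $h \in \mc{K}$; setting $\f{\phi}_h = \phi^n_h$ and $\f{M}=M$ gives the corollary.

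There is no real obstacle here beyond bookkeeping: Lemma~\ref{l:extend} does all of the genuine work (compatibility of the type $\tau$, avoidance of bad and ugly situations, etc.), and the corollary is just its iteration across the finitely many $\sim$-classes. The only point that must be checked explicitly is the non-interference of successive applications, which is immediate because the $\sim$-classes partition $\mc{K}$ and each step touches only one block of the partition.
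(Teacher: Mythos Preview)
Your proof is correct and follows essentially the same approach as the paper: both arguments observe that compactness of $\mc{K}$ makes $\mc{K}|_{M^*}$ finite and then iterate Lemma~\ref{l:extend} once per $\sim$-class, noting that each step leaves the other classes untouched so that $M$ (and hence the partition) never changes. The only cosmetic difference is that the paper iterates only over the classes with $v \notin \dom(\phi_h)$, whereas you iterate over all of them; this is harmless since for a class already containing $v$ the lemma is satisfied trivially with $z=\phi_h(v)$ and no actual extension occurs.
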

	\begin{proof}
		First notice that by the compactness of $\mc{K}$ the set $\{h|_{M^*}:h \in \mathcal{K}, v \not \in \dom(\phi_h)\}$ is finite. By Lemma \ref{l:extend} we can define the extensions one-by-one for every element of $\{h|_{M^*}:h \in \mathcal{K}, v \not \in \dom(\phi_h)\}$.
	\end{proof}

	Finally, before we prove our main result we need a lemma about backward extension of the functions $\phi_h$.
	\begin{lemma}
		\label{l:extspec}
		Suppose that $(g, (\phi_h)_{h \in \mc{K}},M)$ is a good triple, $h \in \mc{K}$ and $z$ a vertex. Then for every $h \in \mc{K}$ there exists extensions $\f{\phi}_{h} \supset \phi_h$ and $\f{M} \supset M$ so that $(g, (\f{\phi}_h)_{h \in \mc{K}},\f{M})$ is a good triple and $z \in \bigcap_{h \in \mc{K}}\mc{O}^f(\ran(\f{\phi}_h))$.
	\end{lemma}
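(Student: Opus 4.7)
If $z \in N$, then $\phi_h|_N = \id_N$ gives $z = \phi_h(z) \in \ran(\phi_h)$ for every $h \in \mc{K}$, so no extension is needed. Assume $z \notin N$; then the $f$-orbit of $z$ is infinite. By the compactness of $\mc{K}$ the set $\{h|_{M^*} : h \in \mc{K}\}$ is finite; let $h_1, \dots, h_l$ be representatives of those classes for which $z \notin \mc{O}^f(\ran(\phi_{h_i}))$. The plan is to process the classes one at a time, maintaining a good triple at each step; since the pattern is identical, it suffices to describe one step.

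Fix a class $[h_i|_{M^*}]$; by property \ref{prt:forth} of good triples, $\phi_{h'} = \phi_{h_i}$ for every $h' \in \mc{K}$ with $h'|_{M^*} = h_i|_{M^*}$, so we extend all of these simultaneously by a single pair $\langle w_i, z \rangle$. The vertex $w_i$ is chosen by Corollary \ref{c:splitting}: define a type $\tau : M^* \to 2$ by requiring, for $v \in \dom(\phi_{h_i})$, that $\tau(v) = 1 \iff z R \phi_{h_i}(v)$ (and extend $\tau$ to the rest of $M^*$ arbitrarily). Corollary \ref{c:splitting} produces a splitting point $w_i \in V$ for $M^*$ and $\mc{K}$ that realizes $\tau$ with $d_{\mc{K}}(w_i, M^*) > 3$. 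Set $\f{\phi}_{h'} := \phi_{h'} \cup \langle w_i, z \rangle$ whenever $h'|_{M^*} = h_i|_{M^*}$, leave the remaining $\phi_{h'}$ unchanged, and let $\f{M} = M \cup \{w_i\}$.

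Most good-triple conditions are immediate from the freshness of $w_i$. Since $w_i \notin M^*$ and $d_{\mc{K}}(w_i, M) > 3$, the vertex $w_i$ lies outside $\rd(g) \cup h(\rd(g))$ for every $h \in \mc{K}$, so properties \ref{prt:auto}--\ref{prt:conj} and \ref{prt:nocirc} follow at once from the choice of $\tau$ and the hypothesis that we started with a good triple. Property \ref{prt:distorb} holds because $\mc{O}^{h \circ g}(w_i) = \{w_i\}$ is a new $(h \circ g)$-orbit, and its $\f{\phi}$-image orbit is $\mc{O}^f(z)$, which is disjoint from $\mc{O}^f(\ran(\phi_{h_i}))$ by the defining property of the class $[h_i|_{M^*}]$. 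For property \ref{prt:onecase}: since the relation $z R f(z)$ is invariant along $f$-orbits we cannot force $f(z) \lnot R z$ by a choice of iterate, but the splitting property of $w_i$ gives $h^{-1}(w_i) \neq h'^{-1}(w_i)$ whenever $h|_{M^*} \neq h'|_{M^*}$, vacating the hypothesis of \ref{prt:onecase} at $w = w_i$ for such $h'$.

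The main obstacle is ruling out new ugly and bad situations, i.e.\ properties \ref{prt:uglysitu} and \ref{prt:badsitu}; this is a mild variant of the analogous verification in Lemma \ref{l:extend}, complicated by the fact that our extension is asymmetric (the pair $\langle w_i, z \rangle$ is added only for $h$ in one $M^*$-class). Any hypothetical new ugly or bad situation must involve $w_i$, else it would already have been present in $(g, (\phi_h)_{h \in \mc{K}}, M)$. Applying Lemma \ref{l:firsttriv} (with $\dom(\f{\phi}_h) \setminus \dom(\phi_h) \subset \{w_i\}$ on the relevant side and $\emptyset$ on the other) together with the distance estimate $d_{\mc{K}}(w_i, M) > 3$ forces $w_i$ to appear on both sides of the equality $h^{-1}(x) = h'^{-1}(x')$ from \ref{prt:badimage}. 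If $h|_{M^*} = h'|_{M^*}$, then $\f{\phi}_h = \f{\phi}_{h'}$, contradicting \ref{prt:baddistb} or \ref{prt:uglynew}; otherwise the splitting property of $w_i$ gives $h^{-1}(w_i) \neq h'^{-1}(w_i)$, again a contradiction.
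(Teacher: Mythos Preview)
Your sequential approach---handling one $M^*$-class at a time rather than choosing all the new preimages simultaneously---is a legitimate alternative to the paper's proof, and most of the verification goes through. The paper instead picks \emph{all} the vertices $v_{h|_{M^*}}$ at once, imposing the mutual separation condition $d_{\mc{K}}(v_{h|_{M^*}}, M \cup \{v_{h'|_{M^*}} : h'|_{M^*}\neq h|_{M^*}\})>2$; this lets it rule out the residual bad case $x=v_{h|_{M^*}},\ x'=v_{h'|_{M^*}}$ by a single distance argument. Your version trades that for an iteration, which is simpler at each step since Lemma~\ref{l:firsttriv} with one side empty kills bad and ugly situations outright when the two automorphisms lie in different classes.

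There are two points where the write-up is loose enough to count as gaps. First, the phrase ``the pattern is identical'' hides the fact that at step $i>1$ you must pick $w_i$ as a splitting point for the \emph{current} set $M_{i-1}^*$ with $d_{\mc{K}}(w_i,M_{i-1})>3$; your stated condition $d_{\mc{K}}(w_i,M^*)>3$ refers to the original $M$ and does not by itself keep $w_i$ far from $w_1,\dots,w_{i-1}$, which Lemma~\ref{l:firsttriv} needs. Second, your last paragraph does not cleanly dispatch the case where both $h,h'$ lie in the class being processed. There $\f{\phi}_h=\f{\phi}_{h'}$, but to contradict \ref{prt:baddistb} you still need $x=x'$; this follows (as in the paper) because $x,x'\in\dom(\f{\phi}_h)\subset M\cup\{w_i\}$, the distance estimate forces $x=w_i\iff x'=w_i$, and for $x,x'\in M$ the point $u=h^{-1}(x)=h'^{-1}(x')$ lies in $M^*$ whence $x=h(u)=h'(u)=x'$. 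Your sentence ``forces $w_i$ to appear on both sides of $h^{-1}(x)=h'^{-1}(x')$'' conflates this case with the asymmetric one and leaves the $y=w_i$ sub-case (where $x,x'\in M$) unaddressed. Both issues are easy to repair, but as written the argument is incomplete.
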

	
	\begin{proof}
		Clearly, the set  $\{h|_{M^*}:h \in \mc{K}, z \not \in \mc{O}^f(\ran(\phi_h))\}$ is finite. Let $\tau_{h|_{M^*}}:M^* \to 2$ so that 
		\begin{equation}
		\label{e:automor}
		\tau_{h|_{M^*}}(w)=0 \iff \phi_{h|_{M^*}}(w) \lnot R z
		\end{equation}
		
		and define $\tau_{h|_{M^*}}$ on ${M^*} \setminus \dom(\phi_{h|_{M^*}})$ arbitrarily.

		We claim that there exists a finite set of vertices $\{v_{h|_{M^*}}:h \in \mc{K}, z \not \in \ran(\phi_h)\}$ which are splitting points for $M^*$ and $\mc{K}$, $v_{h|_{M^*}}$ realizes $\tau_{h|_{M^*}}$ and 
		\begin{equation}
		\label{e:distance}                                                                                                                                                  
		d_{\mc{K}}(v_{{h|_{M^*}}}, M \cup \{v_{h'|_{M^*}}:h'|_{M^*} \not =h|_{M^*}\})>2:                                                                                                                                                                                                \end{equation}
		in order to see this, by the fact that the set $\{h|_{M^*}:h \in \mc{K}\}$ is finite, we can enumerate it as $\{p_0,\dots,p_k\}$. Now by Corollary \ref{c:splitting} we can choose inductively for every $i \leq k$ a $v_{p_i}$ splitting point for $M^*$ and $\mc{K}$ so that $d(v_{p_i},M^* \cup \{v_{p_j}:j<i\})>2$. Now, if $h$ is given then $h|_{M^*}=p_i$ for some $i$. If $d(v_{p_i},M^* \cup \{v_{h'|_{M^*}}:h'|_{M^*} \not =h|_{M^*}\}) \leq 2$ then since $d(v_{p_i},M^*)>2$ there was an $i' \not =i$ so that $d(v_{p_{i'}},v_{p_i}) \leq 2$. But this is impossible by $d(v_{p_i},\{v_{p_j}:j<i\})>2$. 
		
		Let $\f{\phi}_{h}=\phi_h \cup \langle v_{h|_{M^*}},z\rangle$ if $z \not \in \mc{O}^f(\ran(\phi_h))$ and $\f{\phi}_h=\phi_h$ otherwise. Let $\f{M}=M \cup \{v_{h|_{M^*}}:h \in \mc{K}\}$. In order to prove the lemma it is enough to show that $(g, (\f{\phi}_h)_{h \in \mc{K}},\f{M})$ is a good triple. Note that by \ref{prt:forth} of good triples we have that $h|_{M^*}=h'|_{M^*}$ implies $\phi_h=\phi_h'$, but by the definition $v_{h|_{M^*}}$'s we also have that $h|_{M^*}=h'|_{M^*}$ implies $\f{\phi}_h=\f{\phi}_h'$.

		\begin{enumerate}[label=(\roman*)]
			
			\item For $h \in \mc{K}$ we check that the extension is still an automorphism, but for every $w \in \dom(\phi_{h})$ we have by \eqref{e:automor}
			\[wRv_{h|_{M^*}} \iff \tau_{h|_{M^*}}(w)=1 \iff\]\[  \phi_{h|_{M^*}}(w)  R z \iff 
			\f{\phi}_{h|_{M^*}}(w) R \f{\phi}_{h|_{M^*}}(v_{h|_{M^*}}).\]
			
			\item Clearly, $\bigcup_{h \in \mc{K}}\dom(\f{\phi}_h)\subset\bigcup_{h \in \mc{K}}\dom(\phi_h) \cup \{v_{h|_{M^*}}:h \in \mc{K}\} \subset \f{M}$.
			
			\item Obvious.
			\item If $(\f{\phi}_{h} \circ h \circ g)(v_0)= (f \circ \f{\phi}_{h})(v_0)$, became false after the extension for some $v_0$, then either $(h \circ g)(v_0)=v_{h|_{M^*}}$ or $v_0=v_{h|_{M^*}}$. Both cases are impossible, as $v_0 \in \dom(g) \subset M$ and $d_{\mc{K}}((h \circ g)(v_0),M) \leq 1$ so they would imply  $d_{\mc{K}}(v_{h|_M},M) \leq 1$ which contradicts \eqref{e:distance}.
			
			\item Let $h \in \mc{K}$. If for $w,w' \in \dom(\f{\phi}_h) \setminus N$ we have that $\mc{O}^{h \circ g}(w) \not = \mc{O}^{h \circ g}(w')$ then clearly $w \not = w'$. Also, either $w,w' \in \dom(\phi_h)$, in which case we are done, or say, $w=v_{h|_M}$ and $w' \in \dom(\phi_h)$. But $z \not \in \mc{O}^f(\ran(\phi_h))$ by the definition of the functions $\f{\phi}_h$. Therefore, using $\mc{O}^{f}(\f{\phi}_h(v_{h|_{M^*}}))= \mc{O}^{f}(z)$ and $\mc{O}^{f}(z) \cap  \mc{O}^{f}(\phi_h(w'))= \emptyset$ we are done.
			
			\item As mentioned above, already $h|_{M^*}=h'|_{M^*}$ implies $\f{\phi}_h=\f{\phi}_h'$, let alone $h|_{\f{M}^*}=h'|_{\f{M}^*}$.
			
			\item This property does not use the functions $\phi_h$.
			
			\item Fix an $h \in \mc{K}$. By the fact that $v_{h|_{M^*}}$ was a splitting  point for $M$ and $\mc{K}$ we have that $h^{-1}(v_{h|_{M^*}})=h'^{-1}(v_{h|_{M^*}})$ implies $h|_{M^*}=h'|_{M^*}$. But then $v_{h'|_{M^*}}=v_{h|_{M^*}} \in \dom(\f{\phi}_{h'})$ as well, so this condition cannot be violated by $w=v_{h|_{M^*}}$, therefore, $w \in \dom(\phi_h)$. By the fact that $(g, (\phi_h)_{h \in \mc{K}},M)$ is a good triple clearly $w \in \dom(\phi_{h'}) \subset \dom(\f{\phi}_{h'})$.

			\item Suppose that there exists an $h, h' \in \mathcal{K}$ and vertices $x,y$ forming an $(h,h',\f{\phi}_h,\f{\phi}_{h'},g,x,y)$ ugly situation. Notice first that if $h|_{M^*}=h'|_{M^*}$ implies $\f{\phi}_h=\f{\phi}_{h'}$ and this contradicts the conjunction of \ref{prt:uglynew} and \ref{prt:uglydist}. 
			
			Therefore, we have $h|_{M^*} \not =h'|_{M^*}$. Then we claim that Lemma \ref{l:firsttriv} can be used for $\f{\phi}_h,\f{\phi}_{h'}$ and $v=v_{h|_{M^*}}$ and $v'=v_{h'|_{M^*}}$. Indeed, since $h|_{M^*}\not = h'|_{M^*}$ and $v=v_{h|_{M^*}}$ is a splitting points for $\mc{K}$ and $M^*$ clearly $h^{-1}(v) \not =h'^{-1}(v)$ \eqref{e:distance} shows that the other condition of Lemma \ref{l:firsttriv} holds as well. So there is no $(h,h',\f{\phi}_h,\f{\phi}_{h'},g,x,y)$ ugly situation.

			\item Let us consider an $(h,h',\f{\phi}_{h},\f{\phi}_{h'},g,x,x',y)$ bad situation. Again, if $h|_{M^*}=h'|_{M^*}$ then $\f{\phi}_h=\f{\phi}_{h'}$ and $x=x'$. But then \ref{prt:baddist} must fail. 
			
			So $h|_{M^*} \not = h'|_{M^*}$. Then again, the assumptions of Lemma \ref{l:firsttriv} hold for $\f{\phi}_h,\f{\phi}_{h'}$ and $v=v_{h|_{M^*}}$ and $v'=v_{h'|_{M^*}}$. Using part (\ref{lp:bad}) we obtain that either $x=v_{h|_{M^*}}$, $x'=v_{h'|_{M^*}}$ or $y=v_{h|_{M^*}}=v_{h'|_{M^*}}$. But from \ref{prt:badimage} we have $d(x,x')<2$, so $d(v_{h|_{M^*}},v_{h'|_{M^*}}) \leq 2$, so in both cases we are in a contradiction with \eqref{e:distance}.

		\end{enumerate}
	\end{proof}
	Now we are ready to prove the main theorem of this section.
	
	\begin{proof}[Proof of Theorem \ref{t:randommain1}]
		Choose a vertex from each orbit of $f$ and enumerate these vertices as $\{z_0,z_1,\dots\}$ and recall that we have fixed an enumeration of $V$, $\{v_0,v_1,\dots\}$.
		
		By Lemma \ref{l:ind0} the triple $(g_0,(\phi_{0,h})_{h \in \mathcal{K}},M_0)=(id_N,(id_N)_{h \in \mathcal{K}},N)$ is good. 
		
		Suppose that we have already defined a good triple $(g_i,(\phi_{i,h})_{h \in \mathcal{K}},M_i)$ for every $i \leq n$ with the following properties:
		
		\begin{enumerate}
			\item \label{prt:indincr} $M_0 \subset M_1 \subset \dots \subset M_n$, $g_0 \subset g_1 \subset \dots \subset g_n$ and $\forall h \in \mathcal{K}$ we have $\phi_{h,0} \subset \phi_{h,1} \subset \dots \subset \phi_{h,n}$,

			\item \label{prt:indforth} if $2k < n$ then  \[\{v_0,v_1,\dots v_k\} \subset \ran(g_{2k}) \cap \dom(g_{2k}),\]
			
			\item \label{prt:indback} if $2k+1 \leq n$ \[\{z_0,z_1,\dots z_k\} \subset \bigcap_{h \in \mathcal{K}} \mc{O}^f(\ran(\phi_{h,2k+1})),\]
			
		\end{enumerate}
		
		We do the inductive step for an even $n+1$. Choose the minimal index $k$ (which is by the inductive assumption is $\geq \frac{n-1}{2}$) so that $v_k \not \in \ran(g_{n}) \cap \dom(g_{n})$. 
		
		First, by Remark \ref{rm:extm} we can extend $M_n$ to $M'_n \supset \{v_k, h(v_k):h \in \mc{K}\}$ so that $(g_n,(\phi_{h,n})_{h \in \mathcal{K}},M_n)$ is still a good triple. By Corollary \ref{c:extend} there exists an extension $g'_{n} \supset g_{n}$, $\phi'_{h,n} \supset \phi_{h,n}$ and $M''_{n} \supset M'_{n}$ so that $\{v_k,h(v_k):h \in \mc{K}\} \subset \bigcap_{h \in \mc{K}} \dom(\phi'_{h,n})$ and the extended triple is still good.
		
		Second, by Lemma \ref{l:forward} applied firstly and Lemma \ref{l:backward} applied secondly we get extensions $g_{n+1} \supset g'_{n}$, $\phi_{h,n+1} \supset \phi'_{h,n}$ and $M_{n+1} \supset M''_{n}$ so that $(g_{n+1},(\phi_{h,n+1})_{h \in \mathcal{K}},M_{n+1})$ is a good triple and $v_k \in \ran(g_{n+1}) \cap \dom(g_{n+1})$. This extension obviously satisfies the inductive hypothesis.

		Now we do the inductive step for an odd $n+1$ as follows: choose the minimal index $k$ ($\geq \frac{n}{2}$) so that $z_k \not \in \bigcap_{h \in \mathcal{K}} \mc{O}^f(\ran(\phi_{h,n}))$. By Lemma \ref{l:extspec} there exist extensions $g_{n+1} \supset g_{n}$, $\phi_{h,n+1} \supset \phi_{h,n+1}$ and $M_{n+1} \supset M_{n}$ so that $z_k \in \bigcap_{h \in \mathcal{K}} \mc{O}^f(\ran(\phi_{h,n+1}))$. This triple satisfies the inductive assumptions as well.
		
		Thus the induction can be carried out. We claim that $g=\bigcup_{n} g_n$ and $\phi_{h}=\bigcup_{n} \phi_{h,n}$ are automorphisms of $\mc{R}$ and for every $h \in \mc{K}$ we have 
		\[\phi_h \circ h \circ g=f \circ \phi_h.\]
		Indeed, as $g$ and $\phi_{h}$ are increasing unions of partial automorphisms, they are partial automorphisms as well. Moreover by assumption (\ref{prt:indforth}) of the induction $V=\{v_0,v_1,\dots\} \subset \rd(g)$, thus $g \in \aut(\mc{R})$. By \ref{prt:dom} of good triples we have $\dom(g_n) \subset \dom(\phi_{h,n})$ so 
		\[V=\bigcup_{n \in \omega} \dom(g_n) \subset\bigcup_{n \in \omega} \dom(\phi_{h,n})=\dom(\phi_h). \]
		
		By \ref{prt:conj} we obtain
		\[\phi_h \circ h \circ g=f \circ \phi_h.\]
		We have seen that $g \in \aut(\mc{R})$, so $\ran(h \circ g)=V$, therefore from the above equality we get
		\[\ran(\phi_h)=f(\ran(\phi_h)),\]
		so the set $\ran(\phi_h)$ is $f$ invariant, consequently contains full orbits of $f$. But by assumption (\ref{prt:indback}) of the induction $\ran(\phi_h)$ intersects each $f$ orbit, so $\phi_h \in \aut(\mc{R})$ as well.
		
		The second part of the theorem is obvious, as $id_N=g_0 \subset g$ and for every $h \in \mc{K}$ also $id_N=\phi_{h,0} \subset \phi_h$.
	\end{proof}

	\subsection{Translation of compact sets, general case}
	Now we give a complete characterization of the non-Haar null conjugacy classes in $\aut(\mc{R})$. Interestingly enough, a variant of the following property has already been isolated by Truss \cite{truss1985group}.
	\begin{definition}
		Let $f \in \aut(\mc{R})$. We say that $f$ has property $(*)$ if
		\begin{itemize}
			\item $f$ has only finitely many finite orbits and infinitely many infinite orbits,
			\item  for every finite set $M \subset V$ and $\tau:M \to 2$ there exists a $v$ that realizes $\tau$ and $v \not \in \mc{O}^f(M)$.
		\end{itemize}

	\end{definition}
	
	\begin{theorem}
		\label{t:randommain2}
		Suppose that $f$ has property $(*)$. Then the conjugacy class of $f$ is compact biter. If $f$ has no finite orbits, then the conjugacy class of $f$ is compact catcher.	\end{theorem}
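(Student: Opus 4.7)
My plan is to adapt the back-and-forth construction of Theorem~\ref{t:randommain1} to the weaker property~$(*)$. For the compact biter case I exploit the freedom to shrink $\mc{K}$ to a portion; for the compact catcher case the vacuity of the condition $h|_N=f|_N$ (since $N=\emptyset$) makes the construction directly applicable once the main extension lemma is strengthened.

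For the biter case I first pass to a portion of $\mc{K}$ normalizing behavior on the finite orbits $N$ of $f$. By compactness and finiteness of $N$, the set $\{h|_N:h\in\mc{K}\}$ is finite; fixing $h_0\in\mc{K}$ and setting $\mc{K}'=\{h\in\mc{K}:h|_N=h_0|_N\}$ gives a clopen portion. By homogeneity of $\mc{R}$ one can find $\phi_0\in\aut(\mc{R})$ such that, after replacing $\mc{K}'$ by $\phi_0\mc{K}'\phi_0^{-1}$, every element restricts to $f|_N$ on $N$ (after enlarging the reference set to a suitable union of $f$-orbits so that the cycle structures match, which is possible because $f$ has~$(*)$). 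It then suffices to translate $\mc{K}'$ into the conjugacy class of $f$.

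The main construction would mimic Theorem~\ref{t:randommain1}: inductively build good triples $(g_n,(\phi_{n,h}),M_n)$ producing in the limit $g,(\phi_h)\in\aut(\mc{R})$ with $\phi_h\circ h\circ g=f\circ\phi_h$. The Splitting Lemma, Corollary~\ref{c:splitting}, and Lemmas~\ref{l:forward}, \ref{l:backward}, \ref{l:extspec} depend only on the richness of $\mc{R}$ and transfer verbatim. The one step where $(*)_0$ or $(*)_1$ was genuinely used is Lemma~\ref{l:extend}, whose requirement~\ref{prt:extfirst} demanded $z\lnot Rf(z)$ in order to preclude the $x=y=v$ case of bad and ugly situations. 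Under~$(*)$ this absolute polarity is not available, so I would replace it by a \emph{conditional} requirement: $zRf(z)$ must agree with $\phi_{h'}(v)Rf(\phi_{h'}(v))$ for every $h'\in\mc{K}'$ satisfying $v\in\dom(\phi_{h'})$ and $h^{-1}(v)=h'^{-1}(v)$. By compactness and Property~\ref{prt:forth} of good triples only finitely many $h'|_{M^*}$ appear, and the absence of bad situations in the existing good triple forces these polarity demands to agree, pinning the needed polarity of $z$ to a single value. The verifications that no new bad or ugly situations are created by this modified extension are analogous to those in the paper, but with variable rather than fixed polarity.

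The main expected obstacle is to show that property~$(*)$ actually yields such a $z$, realizing both the prescribed type and the prescribed polarity with respect to $f$. The subtle point is that the polarity condition $zRf(z)$ is not a first-order type over a finite set, since $f(z)$ depends on $z$; thus its satisfiability under only~$(*)$ requires extra orbit-theoretic input, likely through a refined Splitting Lemma argument that uses the infinitely many infinite orbits of $f$, locating $z$ in an orbit where the polarity can be tuned by shifting along the orbit while preserving the required type. For the compact catcher case $N=\emptyset$ makes the normalization step vacuous, and the same back-and-forth applied directly to $\mc{K}$ produces the required $g,\phi\in\aut(\mc{R})$ with $\phi h g$ in the conjugacy class of $f$ for every $h\in\mc{K}$.
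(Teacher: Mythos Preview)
Your diagnosis of the bottleneck is correct: the only genuine use of $(*)_0$ rather than $(*)$ is requirement~\ref{prt:extfirst} in Lemma~\ref{l:extend}, and the remaining back-and-forth machinery transfers unchanged. The gap is in your proposed repair of that step.

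The idea of tuning the polarity $zRf(z)$ by ``shifting along the orbit'' cannot work: since $f$ is an automorphism, $vRf(v)\iff f(v)Rf^{2}(v)$, so the truth value of $vRf(v)$ is constant along every $f$-orbit. Shifting changes the type that $v$ realizes over $M$ while leaving the polarity fixed, which is the opposite of what you need. Property~$(*)$ alone does not guarantee, for a given finite type $\tau$ and a prescribed value $\epsilon\in 2$, a vertex $z$ realizing $\tau$, avoiding $\mc{O}^{f}(\dom\tau)$, and satisfying $zRf(z)\iff\epsilon=1$; there are $f$ with~$(*)$ for which every $v$ realizing a fixed $\tau$ satisfies $vRf(v)$. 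Even if such a $z$ were available, allowing $zRf(z)$ breaks Property~\ref{prt:onecase} of good triples: that condition was arranged so that $z\lnot Rf(z)$ makes its hypothesis vacuous at the new point; with $zRf(z)$ it would force $v\in\dom(\f{\phi}_{h'})$ for every $h'$ with $h'^{-1}(v)=h^{-1}(v)$, a strictly larger class than $\{h':h'|_{M^{*}}=h|_{M^{*}}\}$, and extending all those $\phi_{h'}$ to $v$ with the same value $z$ is in general incompatible with their being partial automorphisms.

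The paper sidesteps this by a global reduction rather than a local fix. Claim~\ref{cl:randomred} shows that for each $\tau:N\to 2$ at least one polarity works uniformly for all finite extensions $\f{\tau}\supset\tau$; this defines $\sigma$ on $\simeq$-classes of such $\tau$. One then switches edges and non-edges inside each set $V_{[\tau]}$ with $\sigma([\tau])=1$, obtaining a permutation $S$ of $V$ which is not in $\aut(\mc{R})$ but nevertheless normalizes the subgroup $G_{f}=\{h\in\aut(\mc{R}):h|_{N}=f^{k}|_{N}\text{ for some }k\}$ (Claim~\ref{cl:second}). The conjugate $\f{S}(f)=S^{-1}fS$ then has property~$(*)_{0}$ (Claim~\ref{cl:third}), so Theorem~\ref{t:randommain1} applies to $\f{S}(\mc{K})$ inside $G_{f}$, and transporting the resulting translation back through $\f{S}^{-1}$ gives a translation of the chosen portion of $\mc{K}_{0}$ into the class of $f$. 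When $N=\emptyset$ no portion-taking is needed, yielding the catcher statement. Your normalization step (arranging $h|_{N}=f|_{N}$ on a portion) is essentially what the paper does by a one-sided translation; the ``cycle structures'' remark is unnecessary since one only needs to extend the finite partial automorphism $v\mapsto h_{0}^{-1}(f(v))$ on $N$ to an element of $\aut(\mc{R})$.
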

	
	Our strategy is to reduce this theorem to the special case that has been proven in Theorem \ref{t:randommain1}.
	\begin{claim}
		\label{cl:randomred}
		Suppose that $f$ has property $(*)$. Let $N$ be the union of the finite orbits of $f$ and $\tau:N \to 2$. Then either 
		\begin{enumerate}
			\item \label{prt:noedge} for every $\f{N} \supset N$ finite and $\f{\tau} \supset \tau$, $\f{\tau}:\f{N} \to 2$ there exists a vertex $v$ that realizes $\f{\tau}$, so that $v \not \in \mc{O}^f(\f{N})$ and $v\lnot Rg(v)$ or
			\item \label{prt:isedge} for every $\f{N} \supset N$ finite and $\f{\tau} \supset \tau$, $\f{\tau}:\f{N} \to 2$ there exists a vertex $v$ that realizes $\f{\tau}$, so that $v \not \in \mc{O}^f(\f{N})$ and $vRg(v)$.
			
		\end{enumerate}
		(The possibilities are \textit{not} mutually exclusive.)
		
	\end{claim}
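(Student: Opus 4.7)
The plan is to prove the dichotomy by contradiction, exploiting a translation-invariance of the two would-be obstructions.

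First I would observe that for any vertex $v$ in an infinite orbit of $f$, whether $vRf(v)$ holds is an invariant of the whole orbit: because $f$ is an automorphism, $vRf(v)\iff f^n(v)Rf^{n+1}(v)$ for every $n\in\Z$. Hence every infinite orbit is either an \emph{edge orbit} (each consecutive pair adjacent) or a \emph{non-edge orbit}. Clause \ref{prt:noedge} says that non-edge orbits are numerous enough to realise every extension of $\tau$ (while avoiding $\mc{O}^f(\f N)$), and clause \ref{prt:isedge} says the same for edge orbits. The content of the claim is that one of these two ``supply'' statements must hold.

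Suppose, towards a contradiction, that both clauses fail. Choose $\f N_0\supseteq N$, $\f\tau_0\supseteq\tau$ for the failure of \ref{prt:noedge}, so that every $v$ realising $\f\tau_0$ with $v\notin\mc{O}^f(\f N_0)$ lies in an edge orbit, and similarly $\f N_1,\f\tau_1$ for \ref{prt:isedge}, forcing every such $v$ realising $\f\tau_1$ into a non-edge orbit. I would like to apply property $(*)$ to a joint extension of $\f\tau_0$ and $\f\tau_1$, but they may disagree on $(\f N_0\cap\f N_1)\setminus N$, so direct gluing is unavailable; this is precisely the obstacle the proof has to circumvent.

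The key step is to translate the second obstruction by a power of $f$. Let $L$ be the least common multiple of the lengths of the finite orbits of $f$, so that $f^L|_N=\mathrm{id}_N$, and choose $m$ so large that $\tilde N_1:=f^{mL}(\f N_1)$ meets $\f N_0$ only in $N$; this is possible because $\f N_1\setminus N$ lies in infinite orbits, whose $f$-iterates eventually leave any prescribed finite set. Define $\tilde\tau_1$ on $\tilde N_1$ by $\tilde\tau_1(f^{mL}(w))=\f\tau_1(w)$; since $f^{mL}$ fixes $N$ pointwise, $\tilde\tau_1|_N=\tau$. A direct check shows that the second obstruction is invariant under this translation: any $v$ realising $\tilde\tau_1$ with $v\notin\mc{O}^f(\tilde N_1)=\mc{O}^f(\f N_1)$ corresponds via $v\mapsto f^{-mL}(v)$ to a witness for $(\f N_1,\f\tau_1)$, and $vRf(v)\iff f^{-mL}(v)Rf(f^{-mL}(v))$ because $f$ is an automorphism, so every such $v$ must lie in a non-edge orbit. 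Now $\f\tau_0$ and $\tilde\tau_1$ agree on $\f N_0\cap\tilde N_1=N$ and therefore glue into a single map $\hat\tau$ on $\hat N=\f N_0\cup\tilde N_1$. Applying property $(*)$ to $(\hat N,\hat\tau)$ produces a vertex $v$ realising $\hat\tau$ with $v\notin\mc{O}^f(\hat N)$: by the first obstruction $v$ lies in an edge orbit, while by the translated second obstruction it lies in a non-edge orbit, a contradiction.

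The delicate point is the translation-invariance step. One needs to push $\f N_1$ off $\f N_0$ without disturbing its interaction with $N$, which is what the choice $k=mL$ (a multiple of every finite orbit length) achieves, and one needs to check that both the adjacency $vRf(v)$ and the orbit-avoidance condition $v\notin\mc{O}^f(\cdot)$ are preserved when one replaces $v$ by $f^k(v)$. Once those are in hand, the two obstructions, originally defined on sets overlapping off $N$, can be repositioned onto sets intersecting only in $N$, where property $(*)$ is free to defeat them simultaneously.
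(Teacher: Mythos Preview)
Your proof is correct and follows essentially the same approach as the paper's: assume both clauses fail, translate one of the two obstructions by a power $f^{mL}$ of $f$ (with $L$ the lcm of the finite orbit lengths) so that the two witnessing sets intersect only in $N$, glue the two maps, and apply property $(*)$ to obtain a vertex that is forced to satisfy both $vRf(v)$ and $v\lnot Rf(v)$. The only cosmetic difference is that the paper translates the first obstruction rather than the second, and does not pass through the ``edge orbit / non-edge orbit'' language (which is a pleasant reformulation but not needed).
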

	\begin{proof}
		Suppose that neither of these holds. In other words, there exist finite sets $\f{N}, \f{N}' \supset N$ and $\f{\tau}:\f{N} \to 2$,  $\f{\tau}':\f{N}' \to 2$ extending $\tau$ so that for every $v$ that realizes $\f{\tau}$ and $v \not \in \mc{O}^f(\f{N})$ we have $vRf(v)$ and $v \lnot R f(v)$ that realizes $\f{\tau}'$ and $v \not \in \mc{O}^f(\f{N}')$. 
		
		Notice that as $f$ is an automorphism the fact that for every $v$ that realizes $\f{\tau}$ and $v \not \in \mc{O}^f(\f{N})$ we have $vRf(v)$ implies that for every $k$ if $v$ realizes $\f{\tau} \circ f^{-k}$ and $v \not \in \mc{O}^f(f^{k}(\f{N}))$ then $vRf(v)$. 
		
		Let $M=\f{N} \setminus N$ and $n \in \omega$ so that the length of each orbit in $N$ divides $n$. As $f$ has only infinite orbits outside of $N$, for large enough $k$ we have $f^{kn}(M) \cap \f{N}'=\emptyset$. Moreover, by the condition on $n$ we have that $\f{\tau} \circ f^{-kn}$ coincides with $\tau$ on $N$. But then $\f{\tau} \circ f^{-kn} \cup \f{\tau}'$ is a function extending $\tau$. Since $f$ has property $(*)$ there exists a $v \not \in \mc{O}^f(f^{kn}(\f{N}) \cup \f{N}')$ which realizes $\f{\tau} \circ f^{-kn} \cup \f{\tau}'$. Then on the one hand $v$ realizes $\f{\tau} \circ f^{-kn}$ and $v \not \in \mc{O}^f(f^{kn}(\f{N}))$ so, as mentioned above, $vRf(v)$. On the other hand it also realizes $\f{\tau}'$ and $v \not \in \mc{O}^f(\f{N}')$ thus $v \lnot R f(v)$, a contradiction. 
	\end{proof}
	\begin{proof}[Proof of Theorem \ref{t:randommain2}.]
		Let $N$ be the union of the finite orbits of $f$. Define a function $\sigma: \{\tau:N \to 2\} \to 2$ as follows: let $\sigma(\tau)=0$ if condition (\ref{prt:noedge}) holds from Claim \ref{cl:randomred} and $\sigma(\tau)=1$ otherwise. Moreover, define an equivalence relation $\simeq$ on  $\{\tau:N \to 2\}$ by $\tau \simeq \tau'$ if there exists a $k \in \mathbb{Z}$ such that $\tau \circ f^{k}=\tau'$. Note that if $\tau \simeq \tau'$ then $\sigma(\tau)=\sigma(\tau')$: suppose that (\ref{prt:noedge}) holds for $\tau$ and $\tau'=\tau \circ f^{k}$ and let $\f{\tau}' \supset \tau'$. Then, as $\f{\tau}' \circ f^{-k} \supset \tau$, there exists a $v$ realizing $\f{\tau}' \circ f^{-k}$, $v \not \in \mc{O}^f(\dom(\f{\tau}' \circ f^{-k}))=\mc{O}^f(\dom(\f{\tau}'))$ and $v \lnot R f(v)$. But then $f^{-k}(v) \lnot R f^{-k+1}(v)$, $f^{-k}(v) \not \in \mc{O}^f(\dom(\f{\tau}'))$ and $f^{-k}(v)$ realizes $\f{\tau}'$. Thus, we can consider $\sigma$ as a $\{\tau:N \to 2\}/_\simeq \to 2$ map.
		
		Let \[V_{[\tau]}=\{v \in V \setminus N:v \text{ realizes some $\tau' \simeq \tau$}\}.\]
		Then clearly $V$ is the disjoint union of the sets $N$ and $V_{[\tau]}$ for $\simeq$ equivalence classes of maps $\tau:N \to 2$. The idea is to switch the edges and non-edges in every set  $V_{[\tau]}$ according to $\sigma$: let us define an edge relation $R'$ on the vertices $V$ as follows: for every distinct $v,w \in V$ if $v,w \in V_{[\tau]}$ for some $\tau$ and $\sigma([\tau])=1$ let $vR'w \iff v\lnot Rw$, otherwise let $vR'w \iff  vRw$. 
		
		\begin{claim}
			\label{cl:second}
			There exists an isomorphism $S:(V,R') \to (V,R)$ so that $S|_N=id|_N$ and for every $\tau$ we have $S(V_{[\tau]})=V_{[\tau]}$. Moreover, the subgroup $G_f=\{h \in \aut(\mc{R}):h|_N=f^{k}|_N \text{ for some } k \in \mathbb{Z}\}$ is invariant under conjugating with $S$ (we consider $S$ here as an element of $Sym(V)$, which is typically not an automorphism of $\mc{R}$) and for every $h \in G_f$ we have $h(N)=N$ and $h(V_{[\tau]})=V_{[\tau]}$ for each map $\tau:N \to 2$.
		\end{claim}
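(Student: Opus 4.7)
My plan is to prove the claim in three parts: the structural properties of elements of $G_f$, the construction of $S$ by back-and-forth, and then the conjugation invariance.

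First, if $h\in G_f$ with $h|_N=f^k|_N$, then $h(N)=N$ because $f^k$ permutes $N$ (the union of the finite orbits of $f$). For $v\in V_{\tau'}\subset V\setminus N$ the image $h(v)\in V\setminus N$ realizes the pattern $\tau'\circ f^{-k}\simeq\tau'$ on $N$, so $h(V_{[\tau]})\subset V_{[\tau]}$, with equality obtained by applying the same reasoning to $h^{-1}$. Consequently, because $R'$ differs from $R$ only inside each $V_{[\tau]}$ and the flip rule is uniform across each box, $h$ also preserves $R'$, so $G_f\subset\aut(V,R)\cap\aut(V,R')$.

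Second, I would construct $S$ by back-and-forth. Fix an enumeration $V=\{u_0,u_1,\dots\}$ and inductively build a chain $S_0=\id|_N\subset S_1\subset S_2\subset\cdots$ of finite partial bijections of $V$ such that $\dom(S_n)=\ran(S_n)$, $S_n$ fixes $N\cap\dom(S_n)$ pointwise, and $S_n$ is a partial isomorphism $(V,R')\to(V,R)$ on its domain; alternating forth/back so that each $u_n$ enters both $\dom(S_m)$ and $\ran(S_m)$ in finitely many steps. Note that on each vertex $u\in V\setminus N$ the isomorphism condition combined with $S|_N=\id$ automatically forces $S(u)$ to realize the same $N$-pattern as $u$, so $S$ preserves each $V_{\tau'}$ (hence each $V_{[\tau]}$) setwise. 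The key extension step, say adding a new $u\in V\setminus N$ to the domain, amounts to finding a vertex $v\in V_{\tau_u}$ (where $\tau_u$ is $u$'s pattern on $N$) outside a prescribed finite forbidden set, with $v R S_n(u')\iff u R' u'$ for all $u'\in\dom(S_n)$. Translating the $R'$-constraints through the per-class flip yields a partition of a finite subset of $V$ into required $R$-neighbors $A$ and required $R$-non-neighbors $B$ of $v$; augmenting $A$ with $\{w\in N:\tau_u(w)=1\}$ and $B$ with $\{w\in N:\tau_u(w)=0\}$ together with all forbidden vertices keeps $A\cap B=\emptyset$, because $R'$ is a well-defined symmetric relation defined by one uniform per-class rule. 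The extension property of $\mc{R}$ then produces such a $v$, which automatically lies in $V_{\tau_u}\setminus N$.

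Finally, the conjugation invariance of $G_f$ is immediate from the first step: since $S:(V,R')\to(V,R)$ is an isomorphism and $h\in G_f\subset\aut(V,R')$, the conjugate $S h S^{-1}$ lies in $\aut(V,R)$; from $S|_N=\id$ we compute $(S h S^{-1})|_N=h|_N=f^k|_N$, so $S h S^{-1}\in G_f$, and the direction $S^{-1}h S\in G_f$ is symmetric. The main obstacle is the extension step of the back-and-forth: one must verify that the partition of constraints into required neighbors and non-neighbors of the new vertex $v$ has no internal contradiction. This requires consistent bookkeeping of which box $V_{[\tau]}$ each already-mapped vertex lies in and of the flip rule $\sigma$, but since $R'$ is by definition a single symmetric relation obtained by one per-class rule, no conflict can arise and the extension property of $\mc{R}$ closes the step.
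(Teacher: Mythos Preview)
Your argument is correct and follows essentially the same back-and-forth route as the paper; your organization is arguably a bit cleaner, since you first isolate the observation $G_f\subset\aut(V,R)\cap\aut(V,R')$ and then deduce conjugation invariance abstractly, whereas the paper verifies $S^{-1}hS\in\aut(\mc R)$ by a direct case split on whether $x,y$ lie in the same $V_{[\tau]}$.

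Two small imprecisions are worth flagging. First, in the extension step you cannot literally put the ``forbidden'' vertices into $B$: every vertex of $\ran(S_n)$ already carries an $R$-constraint and some of them land in $A$, so you would create $A\cap B\neq\emptyset$. What you actually use is that the extension property of $\mc R$ yields \emph{infinitely many} witnesses for any finite $(A,B)$, so a finite forbidden set can be avoided separately; the paper sidesteps this by invoking property $(*)$ of $f$, which directly gives a witness outside the relevant orbits. Second, the word ``symmetric'' in your last step hides a real point: from $h\in\aut(V,R')$ and $S:(V,R')\to(V,R)$ you get $ShS^{-1}\in\aut(V,R)$, but the parallel argument only gives $S^{-1}hS\in\aut(V,R')$, not $\aut(V,R)$. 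The missing observation is that the passage $R\mapsto R'$ is an involution and $S$ preserves each $V_{[\tau]}$, so $S$ is simultaneously an isomorphism $(V,R)\to(V,R')$; equivalently, your Part~1 argument applies verbatim with $R$ and $R'$ interchanged, giving $G_f=\{h\in\aut(V,R'):h|_N=f^k|_N\text{ for some }k\}$. With either of these remarks the ``symmetric'' direction goes through.
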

		\begin{proof}
			We define $S$ by induction, using a standard back-and-forth argument. Let us start with $S_0|_N=id|_N$ and suppose that we have already defined $S_n$ a partial isomorphism that respects the sets $V_{[\tau]}$ so that $\{v_0,v_1,\dots,v_n\} \subset  \ran(S_n) \cap \dom(S_n)$. Now we want to extend $S_n$ to $v_{n+1}$. Let $\tau$ be so that $v_{n+1} \in V_{[\tau]}$ and $v_{n+1}$ realizes $\tau$. Let us define $\rho: \ran(S_n) \to 2$ as $\rho(z)=1 \iff S^{-1}_{n}(z)Rv_{n+1}$. Clearly, in order to prove that $S_n$ can be extended it is enough to check that there exists a $z_{n+1} \in V_{[\tau]}$ realizing $\rho$ with respect to the relation $R'$. Let us define $\rho'$ as \[\rho'(z)=
			\begin{cases}
			1-\rho(z), &\text{ if } z \in V_{[\tau]}\\
			\rho(z), &\text{ otherwise.}
			\end{cases}
			\]
			Then, by property $(*)$ of $f$ there exists a vertex $z_{n+1}$ that realizes $\rho'$ with respect to $R$ and also $\rho' \supset \tau$ so $z_{n+1} \in V_{[\tau]}$. But by the definition of $R'$, as $R'$ was obtained by switching the edges within the sets $V_{[\tau]}$, clearly $z_{n+1}$ realizes $\rho$ with respect to $R'$. The ``back" part can be proved similarly.
			
			In order to prove the second claim suppose that $h|_N=f^k|_N$ for some $k$. It is clear that since $N$ is the union of orbits of $f$ it must be the case for $h$ as well, so $h(N)=N$. First, we claim that for every $\tau:N \to 2$ we have $h(V_{[\tau]})=V_{[\tau]}$: let $v \in V_{[\tau]}$ and $\tau' \simeq \tau$ so that $v$ realizes $\tau$. Then $h(v)$ realizes $\tau \circ h^{-1}$, but we have $h^{-1}|_N=f^{-k}|_N$ thus $h(v)$ realizes $\tau \circ f^{-k}$, so by definition $h(v) \in V_{[\tau]}$. 
			
			Now we check that $S^{-1}hS$ is an automorphism of $\mc{R}$. Take arbitrary vertices $x,y \in V$. If for some $\tau$ we have $x,y \in V_{[\tau]}$ then $xRy \iff x \lnot R' y$ and $xRy \iff S(x) R' S(y)$ and since $h$ and $S$ fix the sets $V_{[\tau]}$ we have $S(x),S(y) \in V_{[\tau]}$ and $h$ is an automorphism, \[xRy \iff S(x) R' S(y) \iff S(x) \lnot R S(y) \iff h(S(x)) \lnot R h(S(y)) \iff\]\[ h(S(x)) R' h(S(y)) \iff S^{-1}(h(S(x)))RS^{-1}(h(S(y))).\] If $x$ and $y$ are in different parts of the partition $V=N\cup \bigcup_{[\tau]} V_{[\tau]}$, then the statement is obvious, as in this case $R$ coincides with $R'$. 
			
		\end{proof} 
		Thus, conjugating with $S$ induces an automorphism $\f{S}$ of the group $G_f$. 
		\begin{claim}
			\label{cl:third}
			$\f{S}(f)$ has property $(*)_0$ from Definition \ref{d:star0}, $\f{S}(f)|_N=f|_N$ and $N$ is the union of finite orbits of $\f{S}(f)$. 
		\end{claim}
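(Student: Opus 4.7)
For the identities $\f{S}(f)|_N=f|_N$ and ``$N$ equals the union of finite orbits of $\f{S}(f)$,'' I will verify these first and directly: since $S|_N=\id$ and $f(N)=N$, one has $\f{S}(f)(u)=S^{-1}(f(S(u)))=f(u)$ for $u\in N$; more generally the $\f{S}(f)$-orbit of any $v$ equals $S^{-1}(\mc{O}^f(S(v)))$, so it is finite iff $S(v)\in N$ iff $v\in N$, which also yields the infinite-orbit clause of $(*)_0$ from the corresponding clause for $f$.

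The substantive point is the realization clause of $(*)_0$. Given finite $M\subset V$ and $\tau\colon M\to 2$, my plan is to produce $v$ in the form $v=S^{-1}(w)$, where $w$ will be supplied by Claim \ref{cl:randomred}. The translation step to set up first is this: if I arrange that $w$ has $N$-type exactly some chosen $\rho\colon N\to 2$, then since $S$ and $f\in G_f$ both preserve the partition $V=N\cup\bigcup_{[\rho']}V_{[\rho']}$, both $v$ and $\f{S}(f)(v)=S^{-1}(f(w))$ lie in $V_{[\rho]}$. Combining this with the iso identity $aR'b\iff S(a)RS(b)$, the fact that $S|_N=\id$, and the definition of $R'$ (which flips $R$-edges inside each $V_{[\rho']}$ with $\sigma([\rho'])=1$ and leaves them alone otherwise), a case analysis gives, for every $u\in M$: $uRv\iff S(u)Rw$ whenever $u,v$ are in different parts or in the same $V_{[\rho]}$ with $\sigma([\rho])=0$, and $uRv\iff S(u)\lnot R w$ when they are in the same $V_{[\rho]}$ with $\sigma([\rho])=1$; likewise $v\notin\mc{O}^{\f{S}(f)}(M)\iff w\notin\mc{O}^f(S(M))$, and $v\lnot R\,\f{S}(f)(v)$ is equivalent to $w\lnot R f(w)$ when $\sigma([\rho])=0$ and to $wRf(w)$ when $\sigma([\rho])=1$.

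With this dictionary in hand I will choose any $\rho\colon N\to 2$ extending $\tau|_{M\cap N}$ and assemble the requirements on $w$: the $N$-type of $w$ equals $\rho$; on the finite set $S(M\setminus N)$ the type of $w$ is prescribed (namely $\tau(u)$ or $1-\tau(u)$ at $S(u)$ according to whether the dictionary's flipped or unflipped case applies); and $w\notin\mc{O}^f(N\cup S(M))$. Because $S$ is injective and $S(M\setminus N)\cap N=\emptyset$, these prescriptions assemble into a genuine function $\tau_w\colon N\cup S(M\setminus N)\to 2$ extending $\rho$, and then Claim \ref{cl:randomred} applied to the class $[\rho]$ supplies $w$ realizing $\tau_w$, avoiding $\mc{O}^f(N\cup S(M))$, and satisfying $w\lnot R f(w)$ (alternative (1), if $\sigma([\rho])=0$) or $wRf(w)$ (alternative (2), if $\sigma([\rho])=1$); in either case the dictionary guarantees that $v=S^{-1}(w)$ fulfils all three demands of $(*)_0$.

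The main obstacle is the bookkeeping in the dictionary above: the edge-flipping that turned $R$ into $R'$ inside the classes with $\sigma=1$ was designed precisely so that a condition phrased in $R$ on a pair inside such a class translates, after transport by $S$, into an $R$-condition with the opposite sign, and one must check carefully that the assembled type $\tau_w$ remains a well-defined single function on $N\cup S(M\setminus N)$ and that the orbit-avoidance condition together with the prescribed $R$-relation between $w$ and $f(w)$ can be realized simultaneously---this is where Claim \ref{cl:randomred} is essential, since it is the only tool at hand that simultaneously delivers realization, orbit avoidance, and the correct $R$-edge or non-edge between $w$ and $f(w)$.
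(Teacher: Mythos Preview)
Your proposal is correct and follows essentially the same approach as the paper: both set $v=S^{-1}(w)$, translate the $(*)_0$ requirements on $v$ into $R$-requirements on $w$ via the $R\leftrightarrow R'$ flip (your ``dictionary''), and then invoke Claim~\ref{cl:randomred} for the $N$-type $\rho$ to produce $w$ realizing the assembled type, avoiding $\mc{O}^f(S(M))$, and with the correct $R$-relation to $f(w)$ depending on $\sigma([\rho])$. The only cosmetic difference is that the paper assumes $N\subset M$ at the outset (so $\rho=\tau|_N$ is forced), whereas you allow any $\rho$ extending $\tau|_{M\cap N}$; this changes nothing.
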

		\begin{proof}
			The second part of the claim is obvious: conjugating does not change the cardinality of orbits so $\f{S}(f)$ has infinitely many infinite orbits and finitely many finite ones and also $S|_N=id|_N$ so $S^{-1}fS|_N=f|_N$.
			
			Now take a finite set $M$ and a map $\tau:M \to 2$. Without loss of generality we can suppose $N \subset M$. Then, define
			$\rho:S(M) \to 2$ as follows: 
			\[\rho(w)=
			\begin{cases}
			1-\tau(S^{-1}(w)), &\text{ if } w \in V_{[\tau|_N]} \text{ and } \sigma(V_{[\tau|_N]})=1\\
			\tau(S^{-1}(w)), &\text{ otherwise.}
			\end{cases}
			\]
			Then there exists a $v_0 \not \in \mc{O}^f(S(M))$ so that $v_0$ realizes $\rho$ and 
			\begin{equation} \label{e:v0prop}
			v_0\lnot Rf(v_0) \text{ if } \sigma(V_{[\tau|_N]})=0 \text{ and } v_0Rf(v_0)\text{ if }\sigma(V_{[\tau|_N]})=1.
			\end{equation}
			Since $v_0$ realizes $\tau|_N$ and $\tau|_N \simeq \tau|_N \circ f^{-1}$ we have that $f(v_0)$ realizes $\tau|_N \circ f^{-1}$ thus $f(v_0) \in V_{[\tau|_N]}$. Let $v=S^{-1}(v_0)$, since $S$ fixes the sets $V_{[\tau|_N]}$ we have $v \in V_{[\tau|_N]}$ as well. 
			
			We show that $v$ realizes $\tau$. Let $w \in M$ be arbitrary. Suppose first that $\sigma(V_{[\tau|_N]})=0$ or $w \not \in V_{[\tau|_N]}$. Then from the fact that $v_0=S(v) \in V_{[\tau|_N]}$ we have
			\begin{equation}
			vRw \iff vR'w \iff S(v)RS(w) \iff v_0 R S(w) 
			\label{e:ending0}
			\end{equation}
			by the fact that $w \not \in V_{[\tau|_N]}$ or $\sigma(V_{[\tau|_N]})=0$
			\[ \iff\rho(S(w))=1  \iff \tau(S^{-1}(S(w)))=\tau(w)=1.\]
			
			Now, if $\sigma(V_{[\tau|_N]})=1$ and $w \in V_{\tau|_N}$ then from the definition of $\rho$ clearly  
			\begin{equation}
			vRw \iff v \lnot R'w \iff S(v) \lnot R S(w) \iff v_0 \lnot R S(w) \iff
			\label{e:ending1}
			\end{equation}
			\[ \rho(S(w))=0  \iff \tau(S^{-1}(S(w)))=\tau(w)=1.\]
			
			Moreover, using Claim \ref{cl:second} we get that $S$ and $f$ fixes the sets $V_{[\tau|_N]}$ and $v \in V_{[\tau|_N]}$ so clearly $(S^{-1} \circ f \circ S)(v) \in V_{[\tau|_N]}$. Thus, by equations \eqref{e:ending0} and \eqref{e:ending1} used for $w=(S^{-1} \circ f \circ S)(v)$ we obtain that $vR(S^{-1} \circ f \circ S)(v)$ is true if and only if either
			\[v_0RS((S^{-1} \circ f \circ S)(v) ) \text{ and } \sigma(V_{[\tau|_N]})=0\]
			or
			\[v_0\lnot RS((S^{-1} \circ f \circ S)(v)) \text{ and } \sigma(V_{[\tau|_N]})=1\]
			holds. 
			
			Now $v_0=S(v)$ so we get that $vR(S^{-1} \circ f \circ S)(v)$ holds if and only if either
			\[v_0RSf(v_0) \text{ and } \sigma(V_{[\tau|_N]})=0\]
			or
			\[v_0\lnot Rf(v_0) \text{ and } \sigma(V_{[\tau|_N]})=1\]
			holds. From this, using \eqref{e:v0prop} we get that $v\lnot R(S^{-1} \circ f \circ S)(v)$.
			
			Finally, we prove $v \not \in \mc{O}^{S^{-1}\circ f \circ S}(M)$. Suppose the contrary, let $w \in M$ so that $(S^{-1}fS)^k(w)=v$. Then $(S^{-1}fS)^k(w)=S^{-1}f^kS(w)$ so $S(v)=v_0 \in \mc{O}^{f}(S(M))$, contradicting the choice of $v_0$.
			
			Thus, $S^{-1}fS$ has property $(*)_0$.
		\end{proof}
		
		Now we are ready to finish the proof of the theorem. Let $\mc{K}_0 \subset \aut(\mc{R})$ be an arbitrary non-empty compact set. We will translate a non-empty portion of $\mc{K}_0$ into the conjugacy class of $f$. First, translating $\mc{K}_0$ we can suppose that there exists a non-empty portion $\mc{K}$ of $\mc{K}_0$ so that for every $h \in \mc{K}$ we have that $h|_N=f|_N$ (note that if $f$ has no finite orbits then $\mc{K}=\mathcal{K}_0$ is a suitable choice). In particular, $\mc{K} \subset G_f$. By Claim \ref{cl:second} $G_f$ is invariant under conjugating by $S$ and such a map is clearly an auto-homeomorphism of $G_f$, so $\f{S}(\mc{K})$ is also compact. Using Claim \ref{cl:third} $\f{S}(f)$ has property $(*)_0$ and we can apply the second part of Theorem \ref{t:randommain1} and we get a $g$ so that $g|_N=id|_N$ and for each $h \in \f{S}(\mc{K})$ an automorphism $\phi_h$ such that $\phi_h \circ h \circ g \circ \phi^{-1}_h=\f{S}(f)$ and $\phi_h|_N=id|_N$. In particular, all the automorphisms $g$ and $\phi_h$ are in $G_f$. We will show that $\f{S}^{-1}(g)$ translates $\mc{K}$ into the conjugacy class of $f$. Let $h \in \mc{K}\f{S}^{-1}(g)$ be arbitrary. Then of course $h=h'SgS^{-1}$ for some $h' \in \mc{K}$ and $S^{-1}hS=S^{-1}h'Sg$ so, as $S^{-1}h'S \in \f{S}(\mc{K})$ we get \[S^{-1}hS=\phi_{h'}^{-1}S^{-1}fS\phi_{h'}.\]
		Thus, \[h=S\phi^{-1}_{h'}S^{-1}fS\phi_{h'}S^{-1}\] and as $\phi_{h'} \in G_f$ and $G_f$ is $\f{S}$ invariant we have $S\phi_{h'}S^{-1} \in G_f \subset \aut(\mc{R})$. Therefore, $h$ is a conjugate of $f$ which finishes the proof.
		
	\end{proof}
	From Theorem \ref{t:randommain2} and Proposition \ref{p:F and C co-Haar null} we can deduce the complete characterization of the non-Haar null conjugacy classes of $\aut(\mc{R})$:
	
	\begin{theorem} For almost every element $f$ of $\aut(\mc{R})$
		\label{t:randomintro}
		\begin{enumerate}
			\item \label{pt:autr1} for every pair of finite disjoint sets, $A,B \subset V$ there exists $v \in V$ such that $(\forall x \in A)(xRv)$ and $(\forall y \in B)(y \nr v)$  \textit{ and $v \not \in \mathcal{O}^f(A \cup B)$, i. e., the union of orbits of the elements of $A \cup B$},
			\item \label{pt:autr2} (from Theorem \ref{t:gen}) $f$ has only finitely many finite orbits. 
		\end{enumerate}
		These properties characterize the non-Haar null conjugacy classes, i. e., a conjugacy class is non-Haar null if and only if one (or equivalently each) of its elements has properties \eqref{pt:autr1} and \eqref{pt:autr2}.
		
		Moreover, every non-Haar null conjugacy class is compact biter and those non-Haar null classes in which the elements have no finite orbits are compact catchers.  
	\end{theorem}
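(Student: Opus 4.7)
My plan is to assemble the theorem from three results already at our disposal: Theorem~\ref{t:gen}, Proposition~\ref{p:F and C co-Haar null}, and Theorem~\ref{t:randommain2}. The argument splits naturally into three steps: (A) almost every $f$ has properties \eqref{pt:autr1} and \eqref{pt:autr2}; (B) any $f$ with these properties actually has property $(*)$, so Theorem~\ref{t:randommain2} applies and its conjugacy class is compact biter (catcher when $f$ has no finite orbits); (C) combining these gives the characterization of the non-Haar null conjugacy classes.

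For (A), property \eqref{pt:autr2} is immediate from Theorem~\ref{t:gen} since $\aut(\mc{R})$ has the FACP. For \eqref{pt:autr1}, I fix disjoint finite $A, B \subset V$ and consider
\[
W_{A,B} = \{v \in V : (\forall x \in A)(xRv) \text{ and } (\forall y \in B)(y \nr v)\}.
\]
The extension property of $\mc{R}$ shows that $W_{A,B}$ is infinite, and a standard back-and-forth argument identifies $W_{A,B}$ (after discarding the trivially fixed set $A \cup B$) with a single orbit of the pointwise stabilizer $\aut(\mc{R})_{(A \cup B)}$. Hence for any $f$ lying in the co-Haar null set $\mathcal{C}$ furnished by Proposition~\ref{p:F and C co-Haar null}, the orbit $W_{A,B}$ cannot be covered by the finitely many $f$-orbits that make up $\mathcal{O}^f(A \cup B)$; any $v \in W_{A,B} \setminus \mathcal{O}^f(A \cup B)$ witnesses \eqref{pt:autr1} for this pair. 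Since the universal quantifiers over $F$ and $x$ are already absorbed into the definition of $\mathcal{C}$, a single co-Haar null set handles all pairs $(A,B)$ simultaneously.

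For (B), suppose $f$ satisfies \eqref{pt:autr1} and \eqref{pt:autr2}. A short induction using \eqref{pt:autr1} (at step $k+1$, apply it to $A = \{v_0,\dots,v_k\}$, $B=\emptyset$) produces an infinite sequence of vertices in pairwise distinct $f$-orbits; combined with \eqref{pt:autr2}, this forces infinitely many infinite orbits, i.e.\ property $(*)$. Thus Theorem~\ref{t:randommain2} applies, the conjugacy class of $f$ is compact biter (compact catcher in the no-finite-orbits case), and therefore non-Haar null by Fact~\ref{f:biter}. This is the step where the argument is tightest: one must be a little careful that \eqref{pt:autr1}, formulated as a single existence statement per pair, really does bootstrap to the ``infinitely many infinite orbits'' clause of $(*)$, but the extension property of $\mc{R}$ makes the bootstrap painless and this is the only real obstacle I foresee.

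For (C), let $P \subseteq \aut(\mc{R})$ be the set of $f$ satisfying both \eqref{pt:autr1} and \eqref{pt:autr2}: by (A), $P$ is co-Haar null, and $P$ is a union of conjugacy classes since both properties are invariant under conjugation. By (B), every conjugacy class contained in $P$ is non-Haar null. Conversely, any conjugacy class disjoint from $P$ lies inside the Haar null set $\aut(\mc{R}) \setminus P$, hence is itself Haar null. This proves that the non-Haar null conjugacy classes are precisely those contained in $P$, and the ``moreover'' clause about compact biters and catchers is handed to us directly by (B).
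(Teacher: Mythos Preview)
Your proposal is correct and follows essentially the same route as the paper's own proof: use Theorem~\ref{t:gen} for \eqref{pt:autr2}, show $\mathcal{C}\subset\{f:\text{\eqref{pt:autr1} holds}\}$ via the transitivity of $\aut(\mc{R})_{(A\cup B)}$ on $W_{A,B}\setminus(A\cup B)$ to get \eqref{pt:autr1} co-Haar null, and invoke Theorem~\ref{t:randommain2} for the compact biter/catcher conclusion. Your step~(B) is slightly more explicit than the paper, which silently identifies \eqref{pt:autr1}$+$\eqref{pt:autr2} with property~$(*)$; your short induction confirming infinitely many infinite orbits is exactly the missing line, and it is correct.
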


	\begin{proof}[Proof of Theorem \ref{t:randomintro}.] The facts that the classes of elements having properties  \ref{pt:autr1} and \ref{pt:autr2} and that these classes are compact biters (or catchers, when there are no finite orbits) is exactly Theorem \ref{t:randommain2}. 
		
		The only remaining thing is to show that the union of the conjugacy classes of elements not having properties \ref{pt:autr1} and \ref{pt:autr2} is Haar null. The collection of automorphisms having infinitely many finite orbits is Haar null by Theorem \ref{t:gen}.
		
		Now consider the set $\mc{C}_0=\{f \in \aut(\mc{R}): f $ has property \ref{pt:autr1}$\}$. Proposition \ref{p:F and C co-Haar null} states that the set $\mc{C}$ is co-Haar null for every $G$ having the $FACP$, in particular, for $\aut(\mc{R})$ the set
		\[\mathcal{C} = \{f \in \aut(\mc{R}) :\forall F \subset V \text{ finite } \forall 
		v \in V \;(\text{if $\aut(\mc{R})_{(F)}(v)$ is infinite} \]\[
		\text{then it is not covered by finitely many orbits of $f$})\}\]
		is co-Haar null. Thus, it is enough to show that $\mc{C}_0 \supset \mc{C}$ or equivalently $\aut(\mc{R})\setminus \mc{C}_0 \subset \aut(\mc{R})\setminus \mc{C}$. But this is obvious: if $f \not \in \mc{C}_0$ then there exist disjoint finite sets $A$ and $B$ such that the set $U=\{v:(\forall x \in A)(xRv)$ and $(\forall y \in B)(y \nr v)\}$ can be covered by the $f$ orbit of $A \cup B$. So, letting $F=A \cup B$ and noting that $U$ is infinite and  $\aut(\mc{R})_{(F)}$ acts transitively on $U \setminus F$ we get that for every $v \in U \setminus F$ the orbit $\aut(\mc{R})_{(F)}(v) \subset \mc{O}^f(F)$, showing that $f \not \in \mc{C}$. 
	\end{proof}

	\section{An application}
	\label{s:appl}

	Applying our results and methods about $\aut(\mc{R})$ one can prove a version of a theorem of Truss \cite{truss1985group}. Truss has shown first that if $f, g \in \aut(\mc{R})$ are non-identity elements then $f$ can be expressed as a product of five, later that it can be expressed as the product of three conjugates of $g$ \cite{truss2003automorphism}. Using the methods developed in Section \ref{s:autr} and the characterization of the non-Haar null classes of $\aut(\mc{R})$ one can prove this statement with four conjugates.
	
	\begin{theorem} 
		\label{t:truss}
		Let $C \subset Aut(\mc{R})$ be the conjugacy class of a non-identity element. Then $C^4 (=\{f_1f_2f_3f_4: f_1,f_2,f_3,f_4 \in C\})=\aut(\mc{R})$. 
	\end{theorem}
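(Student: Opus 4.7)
The strategy combines Theorem \ref{t:randomintro} with a direct compact-catcher argument. Let $f \in \aut(\mathcal{R})$ be non-identity with conjugacy class $C$, and let $g \in \aut(\mathcal{R})$ be arbitrary; we seek $g \in C^4 = C^2 \cdot C^2$. The plan is to produce, inside $C \cdot C$, a conjugacy class $C_0$ that is (i) compact catcher and (ii) symmetric, meaning $C_0 = C_0^{-1}$; once this is done, Theorem \ref{t:randomintro} together with a short compact-catcher argument will give $g \in C_0 \cdot C_0 \subseteq C^4$.

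The first phase is the construction of an element $f_0 \in C \cdot C$ having property $(*)$ from Section \ref{s:autr}, no finite orbits, and satisfying $f_0 \sim f_0^{-1}$. Using a back-and-forth driven by the Splitting Lemma, I would build $\phi \in \aut(\mathcal{R})$ step by step so that $f_0 := f \cdot \phi f \phi^{-1}$ has the desired properties. The non-identity of $f$ supplies a moved vertex that serves as the seed for infinite orbits of $f_0$; property $(*)$ is then ensured by the familiar density argument, adding at each stage splitting points realizing all relevant types outside the already-constructed orbits. To arrange $f_0 \sim f_0^{-1}$, I would interleave into the back-and-forth the construction of a graph automorphism $\psi$ satisfying $\psi f_0 \psi^{-1} = f_0^{-1}$; concretely this amounts to building the intra- and inter-orbit edge patterns of $f_0$ symmetrically under time-reversal, which is possible because the homogeneity of $\mathcal{R}$ and the Splitting Lemma together guarantee the existence of splitting points realizing the required symmetric types. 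Since $C \cdot C$ is conjugation-invariant, the conjugacy class $C_0$ of $f_0$ is contained in $C \cdot C$, and the symmetry $f_0 \sim f_0^{-1}$ gives $C_0 = C_0^{-1}$. By Theorem \ref{t:randomintro}, $C_0$ is a compact catcher.

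The second phase is then a one-line argument. Apply the compact-catcher property of $C_0$ to the compact set $K = \{g, e\}$: there exist $u, v \in \aut(\mathcal{R})$ with $\{ugv, uv\} \subseteq C_0$. Writing $c_1 = ugv$ and $c_2 = uv$, one computes $c_1 c_2^{-1} = u g u^{-1}$, so that
\[
g = u^{-1} c_1 c_2^{-1} u = (u^{-1} c_1 u)(u^{-1} c_2 u)^{-1}.
\]
By conjugation-invariance of $C_0$, both $u^{-1} c_1 u$ and $u^{-1} c_2 u$ lie in $C_0$, and since $C_0 = C_0^{-1}$ the inverse $(u^{-1} c_2 u)^{-1}$ also lies in $C_0$; hence $g \in C_0 \cdot C_0 \subseteq C^2 \cdot C^2 = C^4$, as required. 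The main obstacle is clearly Phase 1: the back-and-forth must maintain simultaneously (a) the orbit-and-edge structure that ensures property $(*)$ and no finite orbits, and (b) the time-reversal symmetry that yields $f_0 \sim f_0^{-1}$. Interleaving these tasks consistently via the Splitting Lemma (so that each extension respects both the partial definition of $\phi$ and the partial definition of the witness $\psi$) is the technical heart of the argument.
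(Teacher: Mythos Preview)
Your overall strategy matches the paper's exactly: isolate a conjugacy class $C_0 \subseteq C^2$ that is symmetric ($C_0 = C_0^{-1}$) and compact catcher, then use the compact-catcher argument on $\{\id, g\}$ to get $g \in C_0^{-1}C_0 = C_0^2 \subseteq C^4$. Your Phase~2 is verbatim the paper's argument (Proposition~\ref{p:c0prop}, second half).

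The difference lies in how $C_0$ is produced. You propose to build $f_0 = f \cdot \phi f \phi^{-1}$ by back-and-forth while \emph{simultaneously} constructing a witness $\psi$ for $f_0 \sim f_0^{-1}$; as you yourself note, interleaving the $\phi$- and $\psi$-constructions is the technical heart and is left unspecified. The paper sidesteps this interleaving entirely: it \emph{defines} $C_0$ by four explicit orbit/edge conditions (Definition~\ref{d:c0}) --- no finite orbits, the strong realization property~(\ref{prt:pattern}), no edges within any orbit, and only finitely many edges between any two orbits --- and these conditions are manifestly invariant under $f \mapsto f^{-1}$, so $C_0 = C_0^{-1}$ comes for free. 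A short back-and-forth then shows $C_0$ is a single conjugacy class, and conditions~(\ref{prt:nofini}) and~(\ref{prt:pattern}) feed directly into Theorem~\ref{t:randomintro} to give compact catcher. The inclusion $C_0 \subseteq C^2$ (Proposition~\ref{p:c0prop1}) is then a separate, single back-and-forth: build $g$ so that $g^{-1}hgh \in C_0$, choosing each new value of $g$ to be a splitting point for $\{h,\id\}$ far in $d_{\{h\}}$ and non-adjacent to everything except what is forced.

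So your proposal is correct in outline, but the paper's route is cleaner: by choosing $C_0$ via structural conditions rather than by building a particular $f_0$, the symmetry $C_0 = C_0^{-1}$ and the conjugacy-class property become easy, and the hard step (getting $C_0$ inside $C^2$) becomes a single back-and-forth with no auxiliary $\psi$ to maintain. Your simultaneous construction of $\phi$ and $\psi$ would work in principle, but it buys nothing over the paper's decoupled approach and is substantially harder to execute.
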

	The full proof of this theorem will be omitted, as this statement has already been known and writing down the new proof in detail would be comparable in length to the original proof. So, we split the proof into two propositions from which only the first one will be shown rigorously. 
	
	A certain conjugacy class plays an important role in the proof.	
	\begin{definition}
		\label{d:c0} Let $C_0$ be the collection of elements $f \in \aut(\mc{R})$ with the following properties
		
		\begin{enumerate}
			\item \label{prt:nofini} there are infinitely many infinite orbits and no finite ones,
			\item \label{prt:pattern} for every pair of finite disjoint sets, $A,B \subset V$ there exists $v \in V$ such that $v \not \in \mathcal{O}^f(A \cup B)$, $(\forall x \in A)(xRv)$, and $(\forall y \in \mathcal{O}^f(A \cup B) \setminus A)(y \nr v)$,  (in particular, $(\forall y \in B)(y \nr v)$),
			\item \label{prt:incycle} for every $v \in V$ and $k \in \mathbb{Z}$ we have $v \lnot R f^k(v)$,
			\item \label{prt:outcycle} for every $v,w$ the set $\{k\in \mathbb{Z} : v R f^k(w)\}$ is finite.
			
		\end{enumerate}
	\end{definition}
	
	Theorem \ref{t:truss} clearly follows from the following two propositions.
	
	\begin{proposition} 
		\label{p:c0prop}
		$C_0$ is a conjugacy class and $C^2_0=\aut(\mc{R})$.
	\end{proposition}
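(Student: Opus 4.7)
The plan is to handle the two assertions of Proposition \ref{p:c0prop} separately. First, that $C_0$ is a conjugacy class consists of three ingredients: $C_0$ is nonempty, conjugation-invariant, and any two of its elements are conjugate. Invariance under conjugation in $\aut(\mc{R})$ is immediate since each of properties (1)--(4) is phrased intrinsically in terms of $f$ and the graph relation. Nonemptiness will follow from a standard back-and-forth producing an $f$ realizing all finite requirements. The main ingredient is pairwise conjugacy, for which I view each $f \in C_0$ through its \emph{orbit structure}: properties (1) and (3) force every orbit to be a copy of $\mathbb{Z}$ with no internal edges; property (4) says the edges between orbits $\mc{O}^f(v)$, $\mc{O}^f(w)$ are encoded by the finite translation-invariant set $\{k : v R f^k(w)\}$; and property (2) is a ``randomness'' asserting that any prescribed finite offset pattern with finitely many existing orbits is realized by some new orbit.

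Given $f,g \in C_0$, I will build $\phi \in \aut(\mc{R})$ with $\phi f = g \phi$ by back-and-forth on orbit representatives: at each stage, maintain a finite partial isomorphism $\phi_n$ sending a finite union of $f$-orbit segments to a finite union of $g$-orbit segments, respecting the conjugacy equation on its domain. To put a new $f$-orbit (forward step) into the domain, apply property (2) for $g$ to find a $g$-orbit with offset pattern matching the chosen representative's; then complete this to a partial isomorphism using the extension axiom for $\mc{R}$. Backward steps swap the roles. The finite-support property (4) is what makes each step a finite condition, and property (2) supplies the richness needed for the extension. Nonemptiness is a special case where one side is built freely by the same scheme.

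For the second assertion $C_0^2 = \aut(\mc{R})$, given $h \in \aut(\mc{R})$ I plan to construct $f_1 \in C_0$ such that $f_2 := f_1^{-1} h$ also lies in $C_0$, by a coupled back-and-forth producing finite partial automorphisms $(p_n, q_n)$ satisfying $p_n \circ q_n \subseteq h$ (equivalently $q_n \subseteq p_n^{-1} \circ h$). At stage $n$ I address the $n$-th requirement in an interleaved enumeration of: putting $v_n$ into $\dom p_\infty, \ran p_\infty, \dom q_\infty, \ran q_\infty$; realizing the $n$-th instance of property (2) for $f_1$ and for $f_2$ (namely choosing a realizer of a finite type and committing it to a freshly-created orbit, guaranteed to avoid the finitely-many orbits already started, by the extension axiom of $\mc{R}$); and committing to a finite offset set between a new orbit and each previously-started one (securing properties (3) and (4) for both $f_1$ and $f_2$). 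Extensions of $p_n$ cascade through $h$ to force extensions of $q_n$ and vice versa, and the cascade terminates after finitely many steps because each new vertex is chosen to avoid all orbits already declared, much as in the construction in the proof of Theorem \ref{t:randommain1}.

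The principal obstacle is precisely this coupling: every decision made about $f_1$ forces a parallel decision for $f_2$, and both must remain inside $C_0$. In particular, when creating a new orbit of $f_1$ to handle a property (2) requirement, the induced new orbit(s) of $f_2 = f_1^{-1}h$ must themselves end up with finite cross-offset sets against all previously-committed $f_2$-orbits, and must avoid self-loops in the $f_2$-orbit sense. I expect the technical crux to be a compatibility lemma showing that property (2) of $\mc{R}$, applied with a sufficiently rich finite type (encoding both the $f_1$- and the $f_2$-constraints simultaneously), always produces a realizer satisfying both sides at once; this is analogous to the compatibility-of-types arguments in Lemma \ref{l:extend} and Lemma \ref{l:extspec}, and should go through because the constraints from the $f_1$- and $f_2$-sides involve only finitely many vertices and finitely many prescribed adjacencies.
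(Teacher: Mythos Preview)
Your treatment of the first assertion---that $C_0$ is a single conjugacy class via a back-and-forth on orbit representatives, using property (4) to reduce each step to a finite requirement and property (2) to fulfill it---is essentially the paper's own argument.

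For the second assertion $C_0^2=\aut(\mc{R})$, however, you take a genuinely different route. You propose a direct coupled back-and-forth constructing $f_1\in C_0$ with $f_1^{-1}h\in C_0$, and you correctly identify the main difficulty as the two-sided compatibility of the finite types involved. The paper instead gives a two-line proof that leverages the main result of the paper: since elements of $C_0$ satisfy properties \eqref{pt:autr1} and \eqref{pt:autr2} of Theorem \ref{t:randomintro} and have no finite orbits, $C_0$ is compact catcher; and since the defining conditions of $C_0$ are invariant under taking inverses, $C_0=C_0^{-1}$. Applying compact catcher to the compact set $\{\id_{\mc{R}},h\}$ yields (after conjugating the two-sided translate to a one-sided one, using that $C_0$ is a conjugacy class) a $g$ with $g,gh\in C_0$, whence $h\in C_0^{-1}C_0=C_0^2$.

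Your direct approach is plausible and is in spirit the kind of argument needed for Proposition \ref{p:c0prop1} (which the paper only sketches), but here it is unnecessary: the whole point of this application section is to show that the compact catcher characterization does the work for you. As written, your plan for Part 2 is only a sketch---the ``compatibility lemma'' you anticipate is not stated or proved, and carrying it out would amount to redoing a nontrivial chunk of the machinery of Section \ref{s:autr} by hand. So while not wrong in principle, it misses the short route and leaves the hard step unfinished.
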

	
	\begin{proposition}
		\label{p:c0prop1}
		Let $C$ be the conjugacy class of a non-identity element. Then $C^2 \supset C_0$.
	\end{proposition}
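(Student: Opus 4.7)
We need to show that every $g \in C_0$ can be written as $f_1 f_2$ where $f_1, f_2$ are conjugates of a fixed non-identity $f \in \aut(\mc{R})$. Equivalently, we need automorphisms $\phi_1, \phi_2$ of $\mc{R}$ such that $(\phi_1 f \phi_1^{-1})(\phi_2 f \phi_2^{-1}) = g$. The plan is to construct $\phi_1$ and $\phi_2$ simultaneously via a back-and-forth argument on finite partial automorphisms, in the spirit of the proof of Theorem \ref{t:randommain1}, so that in the limit all maps are defined on $V$ and the product identity holds.

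At each stage we would carry partial automorphisms $\phi_1, \phi_2, f_1, f_2$ of $\mc{R}$ satisfying the partial conjugation relations $\phi_i \circ f = f_i \circ \phi_i$ and the partial product identity $f_1 \circ f_2 \subseteq g$. The inductive step alternates among extensions that add a new vertex to the domain or range of $\phi_1$, do the same for $\phi_2$, and arrange that $g(v) \in \ran(f_1 \circ f_2)$ for each target vertex $v$. Whenever we assign a new value, say $\phi_1(v)$, the choice is constrained by its edge relations to already-placed vertices, by the forced values $\phi_1(f^k(v)) = f_1^k(\phi_1(v))$ along the $f$-orbit of $v$, and by the product identity, which, once $\phi_2$-values on the partner vertex are fixed, forces specific edge relations on $\phi_1(v)$.

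The four defining properties of $C_0$ are precisely what make the required extensions possible. Property \ref{prt:nofini} eliminates finite orbits of $g$, so no finite cycle of $g$ must be matched to a cycle of $f$. Property \ref{prt:pattern} is a strengthened random-graph extension property that lets us realize any desired edge pattern on a new vertex while simultaneously avoiding the $g$-orbits of previously used vertices and controlling the edges to them. Property \ref{prt:incycle} rules out forbidden self-edges along $g$-orbits, and property \ref{prt:outcycle} bounds the interaction between different $g$-orbits, so only finitely many edge constraints arise between any two orbits of $g$, which keeps the inductive bookkeeping finite at each step.

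The main obstacle, as in Theorem \ref{t:randommain1}, lies in handling the cascade: a single extension of $\phi_1$ propagates via the conjugation relation and via $f_1 f_2 = g$ to force values of $\phi_2$, which in turn may force further values of $\phi_1$, and so on. The hard part is isolating a suitable ``good state'' invariant (playing the role of the good triples of Section \ref{s:autr}), under which this propagation always terminates consistently, and proving analogues of the forward- and backward-extension lemmas in that enriched setting; the Splitting Lemma and its corollary will be used to supply new vertices at a safe $d_{\mc{K}}$-distance from previously placed ones so that the various edge requirements on a new image do not collide. Once those extension lemmas are established, the proposition follows by a standard diagonal enumeration ensuring $\phi_1, \phi_2 \in \aut(\mc{R})$ and $f_1 f_2 = g$ on all of $V$.
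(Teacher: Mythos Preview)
Your outline is a plausible direct attack, but the paper takes a much lighter route, and the simplification is worth knowing.  The paper first observes that $C^2$ is conjugacy-invariant and that $C_0$ is a single conjugacy class (Proposition~\ref{p:c0prop}), so it suffices to exhibit \emph{one} element of $C_0$ inside $C^2$.  Concretely: fix any $h\in C$ and build a single automorphism $g$ so that $g^{-1}hgh\in C_0$.  Then $g^{-1}hg\in C$ and $h\in C$, so $g^{-1}hgh\in C^2\cap C_0$, and we are done.

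This reduction collapses your four unknowns $\phi_1,\phi_2,f_1,f_2$ with the coupled constraints $\phi_i f=f_i\phi_i$ and $f_1f_2=g$ down to a single back-and-forth on $g$ alone, with no prescribed target: one only has to steer $g^{-1}hgh$ into the class $C_0$, i.e.\ arrange that it satisfies conditions (\ref{prt:nofini})--(\ref{prt:outcycle}) of Definition~\ref{d:c0}.  The paper's sketch says how: at each step choose $g(v)$ (or $g^{-1}(v)$) far in $d_{\{h\}}$ from all previously used vertices, make it a splitting point for $\{h,\id_{\mc{R}}\}$ (equivalently, a non-fixed point of $h$), and leave it non-adjacent to every earlier vertex except where forced by $g$ being a partial automorphism.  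These choices directly enforce the four $C_0$-conditions on $g^{-1}hgh$ without any cascade: there is no second conjugator whose values are being forced.

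By contrast, your plan aims at a \emph{specific} $g\in C_0$ and therefore has to solve a genuine factorisation problem.  The ``cascade'' you flag is real in that formulation and is precisely what the paper's reduction sidesteps; the good-triple machinery of Section~\ref{s:autr} was built for moving a whole compact family at once, which is overkill here.  Your approach is not wrong in principle, but you have correctly identified that the hard part---the right invariant controlling the interaction between $\phi_1$ and $\phi_2$---is still missing, and there is no need to supply it once you use the conjugacy-invariance reduction.
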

	We will prove Proposition \ref{p:c0prop}, because it shows how our characterization can be used, and after that we only sketch the proof of Proposition \ref{p:c0prop1}. 
	
	\begin{proof}[Proof of Proposition \ref{p:c0prop}.] 
		Suppose that $f,f' \in C_0$. We first show that $f$ and $f'$ are conjugate by building an automorphism $\varphi$ so that $\varphi \circ f=f'\circ \varphi$. 
		
		Suppose that we have an $R$-preserving map $\varphi$ such that $\dom(\varphi)$ is the union of finitely many $f$ orbits, $\ran(\varphi)$ is the union of finitely many $f'$ orbits and $\varphi \circ f=f'\circ \varphi$ holds where both sides are defined. We extend $\dom(\varphi)$ and $\ran(\varphi)$ to every vertex back-and-forth. 
		
		Recall that $\{v_0,v_1\dots\}$ is an enumeration of the vertices of $\mc{R}$. Take the minimal $i$ with $v_i \not \in \dom(\varphi)$. Then, by condition (\ref{prt:outcycle}) on the map $f$, $v_i$ is only connected to  finitely many vertices from $\dom(\varphi)$, let us denote these vertices by $\{w_1,\dots,w_k\}$ and choose one element $\{w_{k+1},\dots,w_l\}$ from every $f$ orbit in the domain of $\varphi$ that is different from $\mc{O}^f(w_i)$ for every $i \leq k$.  
		
		Then, since condition (\ref{prt:pattern}) holds for $f'$ there exists a vertex $v'$ so that $v' \not \in \bigcup_{i \leq l} \mc{O}^{f'}(\varphi(w_i))$, $v' R \varphi(w_i)$ for $i \leq k$ and $v' \nr w$ whenever $w \in (\bigcup_{i \leq l} \mc{O}^{f'}(\varphi(w_i))) \setminus \{\varphi(w_i):i \leq k\}$. Let $\varphi(v_i)=v'$ and extend $\varphi$ to $\mc{O}^f(v_i)$ defining $\varphi(f^{n}(v_i))={f'}^{n}(\varphi(v_i))$. Using condition (\ref{prt:incycle}) it is easy to see that the extended $\varphi$ will be a partial automorphism.
		
		Now take the minimal $i$ with  $v_i \not \in \ran(\varphi)$ and follow the procedure outlined above, etc. Clearly, this process yields an automorphism $\varphi$ that witnesses that $f$ and $f'$ are conjugate, hence $C_0$ is a conjugacy class.
		
		Now we prove the second part of the proposition. By conditions (\ref{prt:nofini}), (\ref{prt:pattern}) and Theorem \ref{t:randomintro} the class $C_0$ is compact catcher. Moreover, observe that conditions (\ref{prt:nofini})-(\ref{prt:outcycle}) are invariant under taking inverses, hence $C_0=C^{-1}_0$. Now let $h \in \aut(\mc{R})$ be arbitrary. Then, using the fact that $C_0$ is compact catcher for the compact set $\{\id_{\mc{R}}, h\}$ there exists a $g \in \aut(\mc{R})$ such that $g,gh \in C_0$, in other words, $h \in C^{-1}_0C_0=C^2_0$, so $C^2_0=\aut(\mc{R})$ holds.

	\end{proof}
	
	In order to show Proposition \ref{p:c0prop1} one can use the methods from Section \ref{s:autr}. By the conjugacy invariance of $C^2$ it is enough to prove that for every $h \in C$ there exists a $g$ such that $g^{-1}hgh \in C_0$. Define $g$ inductively, maintaining the following properties: whenever we extend $g$ (or $g^{-1}$) to some $v$ vertex the vertex $g(v)$ should be far enough in $d_{\{h\}}$ from every vertex already used in the induction, be a splitting point for the compact set $\{h,\id_{\mc{R}}\}$ (note that this is equivalent to saying that $v$ is not a fixed point of $h$) and not connected to every already used vertex - except for those to which it is necessary in order for $g$ to be an automorphism.

	\bigskip
	\textbf{Acknowledgements.} We would like to thank to R. Balka, Z. Gyenis, A. Kechris, C. Rosendal, S. Solecki and P. Wesolek for many valuable remarks and discussions.
	
	\bibliographystyle{apalike}
	\bibliography{ran}

\begin{thebibliography}{}

\bibitem[Becker and Kechris, 1996]{becker1996descriptive}
Becker, H. and Kechris, A.~S. (1996).
\newblock {\em The descriptive set theory of {P}olish group actions}, volume
  232 of {\em London Mathematical Society Lecture Note Series}.
\newblock Cambridge University Press, Cambridge.

\bibitem[Bernardes and Darji, 2012]{bernardes}
Bernardes, Jr., N.~C. and Darji, U.~B. (2012).
\newblock Graph theoretic structure of maps of the {C}antor space.
\newblock {\em Adv. Math.}, 231(3-4):1655--1680.

\bibitem[Christensen, 1972]{originalhaarnull}
Christensen, J. P.~R. (1972).
\newblock On sets of {H}aar measure zero in abelian {P}olish groups.
\newblock {\em Israel J. Math.}, 13:255--260 (1973).

\bibitem[Darji et~al., a]{autgencikk}
Darji, U.~B., Elekes, M., Kalina, K., Kiss, V., and Vidny\'anszky, Z.
\newblock The structure of random automorphisms of countable structures.
\newblock {\em arxiv:1705.07593}.

\bibitem[Darji et~al., b]{autqcikk}
Darji, U.~B., Elekes, M., Kalina, K., Kiss, V., and Vidny\'anszky, Z.
\newblock The structure of random automorphisms of the rational numbers.
\newblock {\em arxiv:1705.07593}.

\bibitem[Dougherty and Mycielski, 1994]{DM}
Dougherty, R. and Mycielski, J. (1994).
\newblock The prevalence of permutations with infinite cycles.
\newblock {\em Fund. Math.}, 144(1):89--94.

\bibitem[Glasner and Weiss, 2003]{glasner2003universal}
Glasner, E. and Weiss, B. (2003).
\newblock The universal minimal system for the group of homeomorphisms of the
  {C}antor set.
\newblock {\em Fund. Math.}, 176(3):277--289.

\bibitem[Hodges, 1993]{hodges}
Hodges, W. (1993).
\newblock {\em Model theory}, volume~42 of {\em Encyclopedia of Mathematics and
  its Applications}.
\newblock Cambridge University Press, Cambridge.

\bibitem[Hodges et~al., 1993]{shelah}
Hodges, W., Hodkinson, I., Lascar, D., and Shelah, S. (1993).
\newblock The small index property for {$\omega$}-stable {$\omega$}-categorical
  structures and for the random graph.
\newblock {\em J. London Math. Soc. (2)}, 48(2):204--218.

\bibitem[Kechris, 1995]{kechrisbook}
Kechris, A.~S. (1995).
\newblock {\em Classical descriptive set theory}, volume 156 of {\em Graduate
  Texts in Mathematics}.
\newblock Springer-Verlag, New York.

\bibitem[Kechris and Rosendal, 2007]{KechrisRosendal}
Kechris, A.~S. and Rosendal, C. (2007).
\newblock Turbulence, amalgamation, and generic automorphisms of homogeneous
  structures.
\newblock {\em Proc. Lond. Math. Soc. (3)}, 94(2):302--350.

\bibitem[Solecki, 2001]{openlyhaarnull}
Solecki, S. (2001).
\newblock Haar null and non-dominating sets.
\newblock {\em Fund. Math.}, 170(1-2):197--217.
\newblock Dedicated to the memory of Jerzy \L o\'s.

\bibitem[Truss, 1985]{truss1985group}
Truss, J.~K. (1985).
\newblock The group of the countable universal graph.
\newblock {\em Math. Proc. Cambridge Philos. Soc.}, 98(2):213--245.

\bibitem[Truss, 1992]{truss1992generic}
Truss, J.~K. (1992).
\newblock Generic automorphisms of homogeneous structures.
\newblock {\em Proc. London Math. Soc. (3)}, 65(1):121--141.

\bibitem[Truss, 2003]{truss2003automorphism}
Truss, J.~K. (2003).
\newblock The automorphism group of the random graph: four conjugates good,
  three conjugates better.
\newblock {\em Discrete Math.}, 268(1-3):257--271.

\end{thebibliography}

\end{document}